\tikzstyle{every node}=[circle, draw, fill=white,inner sep=0pt, minimum width=4pt]
\tikzstyle{nodelabel}=[rounded corners,fill=none,inner sep=5pt,draw=none]
\tikzstyle{matching} = [ultra thick]
\tikzset{snake/.style={decorate, decoration=snake}}
\newcommand{\mcg}{matching covered graph}
\newcommand{\mc}{matching covered}
\newcommand{\ce}{cycle-extendable}
\newcommand{\pema}{perfect matching}
\newcommand{\NT}{Norine-Thomas}
\newcommand{\bw}{half biwheel}
\newcommand{\tvc}{\mbox{$2$-vertex-cut}}
\title[Planar cycle-extendable graphs]{Planar cycle-extendable graphs\\\small{Three of the authors dedicate this work to their coauthor, the late Ajit A Diwan}}
\author[Dalwadi, Pause, Diwan and Kothari]{Aditya Y Dalwadi\affiliationmark{1}\thanks{Supported by  IITM Pravartak Technologies Foundation.}
  \and Kapil R Shenvi Pause\affiliationmark{2}\\
  \and Ajit A Diwan \affiliationmark{3}
  \and Nishad Kothari \affiliationmark{1} \thanks{Supported by IC\&SR IIT Madras.}}
  \affiliation{
  IIT Madras, Chennai, India\\
  Chennai Mathematical Institute, Chennai, India\\
  IIT Bombay, Mumbai, India}
\keywords{matchings, perfect matchings, matching covered graphs, cycle-extendability}
\begin{document}

\publicationdata
{vol. 27:2}
{2025}
{13}
{10.46298/dmtcs.13929}
{2024-07-14; 2024-07-14; 2025-03-26}
{2025-04-30}

  \maketitle

\begin{abstract}
For most problems pertaining to perfect matchings, one may restrict attention to {\em \mcg s} --- that is, connected nontrivial graphs with the property that each edge belongs to some perfect matching. There is extensive literature on these graphs that are also known as {\em $1$-extendable graphs} (since each edge extends to a perfect matching) including an ear decomposition theorem due to Lov\'asz and Plummer.

 A cycle $C$ of a graph $G$ is {\em conformal} if $G-V(C)$ has a perfect matching; such cycles play an important role in the study of perfect matchings, especially when investigating the Pfaffian orientation problem. A \mcg~$G$ is {\em\ce} if --- for each even cycle $C$ --- the cycle $C$ is conformal, or equivalently, each perfect matching of $C$ extends to a perfect matching of $G$, or equivalently, $C$ is the symmetric difference of two perfect matchings of $G$, or equivalently, $C$ extends to an ear decomposition of~$G$. In the literature, these are also known as {\em cycle-nice} or as {\em $1$-cycle resonant} graphs.


Zhang, Wang, Yuan, Ng and Cheng, 2022, provided a characterization of claw-free \ce~graphs. Guo and Zhang, 2004, and independently Zhang and Li, 2012, provided characterizations of bipartite planar \ce~graphs. In this paper, we establish a characterization of all planar \ce~graphs --- in terms of $K_2$ and four infinite families. 

\end{abstract}

\section{Cycle-extendability}
All graphs considered in this paper are loopless. However, we allow multiple/parallel edges. For graph-theoretic notation and terminology, we follow~\cite{bomu08}, whereas for terminology pertaining to matching theory, we follow~\cite{lumu22}.

A graph is said to be {\em matchable} if it has a perfect matching. Most problems pertaining to the study of perfect matchings may be reduced to {\em \mcg s} --- that is, nontrivial connected graphs with the property that each edge lies in some perfect matching. There is extensive literature on \mcg s.  In~\cite{lopl86}, they are referred to as {\em $1$-extendable graphs}, since each edge extends to a perfect matching. There is also literature on {\em $k$-extendable graphs} --- \mcg s with the additional property that each matching of cardinality $k$ extends to a perfect matching; see~\cite{plummer91}. In a similar spirit, a \mcg\ $G$ is {\em \mbox{\ce}} if, for each even cycle~$C$, either perfect matching of $C$ extends to a perfect matching of $G$. This leads us to the following decision problem that is not known to be in \textbf{NP}.

 \begin{prb}
 \label{main-problem}
     Given a \mcg\ $G$, decide whether $G$ is \ce.
 \end{prb}

  Before stating our contributions and related prior work on the above problem, let us take a closer look at conformality and cycle-extendability. A cycle $C$ of a matchable graph $G$ is a {\em conformal cycle} if the graph $G-V(C)$ is matchable. Thus, a \mcg~is \ce\ if and only if each even cycle is conformal. From this viewpoint, it is easy to see that Decision Problem \ref{main-problem} belongs to {\bf co-NP}. 
  
  In Figure \ref{fig:cube}, the $6$-cycle denoted by the dashed line is not conformal, and using the symmetries of the Cube graph, the reader may verify that there are precisely four such \mbox {$6$-cycles }, whereas every other cycle is conformal. In particular, the Cube graph is not \ce. Figures \ref{fig:bipartite-ce} and \ref{fig:nonbipartite-ce} depict examples of bipartite and nonbipartite \ce\ graphs, respectively.

\begin{figure}[!htb]
    \centering
    \begin{subfigure}{0.3\textwidth}
            \centering
             \begin{tikzpicture}[every node/.style={draw=black, circle, scale=0.66}, scale=0.7]
                \node (a1) at (3,3){} ;  
                \node[fill=black] (a2) at (3,5){};
                \node[fill=black,label=below:{$v$}] (a4) at (5,3){};
                \node (a3) at (5,5){};
                \node[fill=black] (a5) at (1.5,1.5){} ;  
                \node (a6) at (1.5,6.5){};
                \node (a8) at (6.5,1.5){};4
                \node[fill=black] (a7) at (6.5,6.5){};
                \draw[dashed] (a5) -- (a1);\draw (a6) -- (a2);\draw[dashed] (a7) -- (a3); \draw (a8) -- (a4);
                \draw[dashed] (a1) -- (a2);
                \draw[dashed] (a2) -- (a3);
                \draw (a3) -- (a4);
                \draw (a4) -- (a1);
                \draw[] (a5) -- (a6); 
                \draw (a6) -- (a7);
                \draw[dashed] (a7) -- (a8);
                \draw[dashed] (a8) -- (a5);
         \end{tikzpicture}
         \caption{}
        \label{fig:cube}
    \end{subfigure}%
    \hfill
    \begin{subfigure}{0.3\textwidth}
        \centering
          \begin{tikzpicture}[every node/.style={draw=black, circle,scale=0.66}, scale=0.5]
               \node[fill=black,text=white] (a1) at (0,5){};
               \node (a2) at (1.5,5){};
               \node[fill=black,text=white] (a3) at (3,5){};
               \node (a4) at (4.5,5){};
               \node[fill=black,text=white] (a5) at (6,5){};
                \node (a8) at (3,0){};
                \draw (a1) -- (a8) -- (a3);
                \draw (a1) -- (a2) -- (a3) -- (a4) -- (a5);
                \draw (a5) -- (a8);
            \end{tikzpicture}
        \caption{}
        \label{fig:bipartite-ce}
     \end{subfigure}
     \hfill
    \begin{subfigure}{0.3\textwidth}
            \centering
            \begin{tikzpicture}[every node/.style={draw=black, circle,scale=0.66}, scale=0.5]
                \node (a6) at (0,1.5) {};
                \node (a7) at (-0.5,5.75) {};
                \node (a8) at (3,8) {};
                \node (a9) at (6.5,5.75) {};
                \node (a10) at (5.75,1.5) {};
                \node (a1) at (3,4.5){};
                
                \draw (a6) -- (a7);
                \draw (a1) -- (a6);
                \draw (a1) -- (a7);
                \draw (a1) -- (a9);
                \draw (a1) -- (a10);
                \draw (a7) -- (a8);
                \draw (a8) -- (a9);              
                \draw[] (a9) -- (a10);
                \draw (a10) -- (a6);
        \end{tikzpicture}
        \caption{}
        \label{fig:nonbipartite-ce}
     \end{subfigure}
     \caption{(a) the Cube graph is not \ce; (b) a bipartite \ce\ graph; (c) a nonbipartite \ce\ graph $W_5^{-}$}
 \end{figure}
 
 Observe that, in a matchable graph, a cycle is conformal if and only if it may be expressed as the symmetric difference of two perfect matchings. It is for this reason that they are also known as {\em alternating cycles}; see~\cite{cali08}. It is easily observed that a connected matchable graph, distinct from $K_2$, is matching covered if and only if each edge belongs to a conformal cycle.~\cite{litt75} proved the stronger statement that, in a \mcg, any two edges belong to a common conformal cycle. There are several other terms used in the literature for conformal cycles : {\em well-fitted} cycles in~\cite{mccu04}; {\em nice} cycles in~\cite{lova87}; and {\em central} cycles in~\cite{rst99}.

Zhang, Wang, Yuan, Ng and Cheng~\cite{zwync22} provided a characterization of claw-free \ce\ (aka cycle-nice) \mcg s; their work implies that Decision Problem \ref{main-problem} belongs to {\bf NP} as well as {\bf P} for claw-free graphs. 

\cite{Guozhang04}, and independently~\cite{Zhli12}, provided characterizations of bipartite planar \ce~(aka 1-cycle resonant) graphs. The second author discovered a couple of these characterizations independently, and they appear in their MSc thesis~\cite{paus22}; they also made partial progress towards characterizing planar \ce~graphs.

In this paper, we characterize all planar \ce\ graphs and our result implies that Decision Problem \ref{main-problem} belongs to {\bf NP} as well as {\bf P} for planar graphs. The results reported here comprise a proper subset of those that appear in the MS~thesis~\cite{dalwadi25} of the first author.

\bigskip
\noindent{\bf Organization of this paper}
\bigskip

In Section~\ref{Section1.1}, we discuss the ear decomposition theory for \mcg s that provides an alternative definition of \ce~graphs that motivates their investigation. In Section~\ref{sec:irreducible-graphs}, we discuss series and parallel reductions that help us in restricting ourselves to irreducible graphs --- that is, simple graphs whose vertices of degree two comprise a stable set. 

In Section~\ref{section-3}, we describe the tight cut decomposition theory for \mcg s and its applications towards characterizing \ce~graphs. In particular, in Section~\ref{Section-3.3}, we use a result of
~\cite{cali08} to deduce that every planar \ce~irreducible graph, except $K_2$, either has a vertex of degree two or otherwise is a ``brick'' --- a special class of $3$-connected nonbipartite \mcg s.

In Section~\ref{sec:k23-free}, we first discuss a necessary condition ($K_{2,3}$-freeness) for a planar \mcg~to be \ce, and then a necessary condition for a $3$-connected graph to be $K_{2,3}$-free. In Section~\ref{section-4}, we use results of Section~\ref{sec:k23-free} and the well-known brick generation theorem of~\cite{noth07} to characterize planar \ce~bricks.

In Section~\ref{section-5},  we describe four infinite families of nonbipartite planar \ce\ irreducible graphs using a special class of bipartite \ce\ graphs called ``\bw s''. Finally, in Section~\ref{section-6}, we prove that every planar \ce~irreducible graph is either $K_2$ or is a member of one of these four families; this proves a conjecture of the third author, and places Decision Problem~\ref{main-problem} in {\bf NP} as well as in {\bf P} for planar graphs.

\subsection{Ear decompositions of \mcg s} 
\label{Section1.1}
Observe that \mcg s, except $K_2$, are $2$-connected. The ear decomposition theory of $2$-connected graphs, due to~\cite{whit33}, is well-known. We now discuss a refinement of this theory, due to Hetyei, that is applicable to the subclass of bipartite \mcg s; see \cite{lopl86}.

 Given a graph $G$ and a (proper) subgraph $H$, {\em an ear} (aka a {\em single ear}) of $H$ in $G$ is an odd path $P$ whose ends are in $V(H)$ but is otherwise disjoint with $H$. For a bipartite~graph~$G$, a sequence of subgraphs $(G_0, G_1, \dots, G_r)$ is called a {\em bipartite ear decomposition} of $G$ if (i)~$G_0$ is an (even) cycle, (ii)~$G_r=G$, and (iii)~$G_{i+1}=G_i \cup P_i$ where $P_i$ is an ear  of $G_i$ (in $G$) for each $i\in \{0,1,\ldots,r-1\}$. Figure~\ref{ear-decom-K_33} shows a bipartite ear decomposition of a bipartite \mcg\ where each ear is denoted by thick lines.

 \begin{figure}[!htb]
    \centering
   \begin{tikzpicture}[every node/.style={draw=black, circle,scale=0.66}, scale=0.5]
               \node[fill=black,text=white] (a1) at (0,5){};
               \node (a2) at (1.5,5){};
               \node[fill=black,text=white] (a3) at (3,5){};
               \node (a4) at (4.5,5){};
               \node[fill=black,text=white] (a5) at (6,5){};
                \node (a8) at (3,0){};
                \draw (a1) -- (a8);
                \draw (a1) -- (a2) -- (a3);
                 \draw (a3) -- (a4) -- (a5) --(a8);
    \end{tikzpicture}
 \hspace*{20pt}
\begin{tikzpicture}[every node/.style={draw=black, circle,scale=0.66}, scale=0.5]
               \node[fill=black,text=white] (a1) at (0,5){};
               \node (a2) at (1.5,5){};
               \node[fill=black,text=white] (a3) at (3,5){};
               \node (a4) at (4.5,5){};
               \node[fill=black,text=white] (a5) at (6,5){};
                \node (a8) at (3,0){};
                \draw (a1) -- (a8);
                \draw[ultra thick] (a8) -- (a3);
                \draw (a1) -- (a2) -- (a3);
                 \draw (a3) -- (a4) -- (a5) --(a8);
    \end{tikzpicture}
 \hspace*{20pt}
\begin{tikzpicture}[every node/.style={draw=black, circle,scale=0.66}, scale=0.5]
               \node[fill=black,text=white] (a1) at (0,5){};
               \node (a2) at (1.5,5){};
               \node[fill=black,text=white] (a3) at (3,5){};
               \node (a4) at (4.5,5){};
               \node[fill=black,text=white] (a5) at (6,5){};
               \node (a6) at (7.5,5){};
               \node[fill=black,text=white] (a7) at (9,5){};
                \node (a8) at (4.5,0){};
                \draw (a1) -- (a8) -- (a3);
                \draw (a1) -- (a2) -- (a3);
                 \draw (a3) -- (a4) -- (a5) --(a8);
                 \draw[ultra thick] (a5) -- (a6) -- (a7) --(a8);
    \end{tikzpicture}
    \caption{a bipartite ear decomposition of a bipartite \mcg}
    \label{ear-decom-K_33}
\end{figure}

A subgraph $H$ of a graph $G$ is {\em conformal} if the graph $G-V(H)$ is matchable. Given any bipartite ear decomposition of a bipartite graph, it is easily observed that each subgraph in the sequence is conformal. Furthermore, the following theorem due to Hetyei, implies that each subgraph is also \mc\ and establishes the converse as well.

\begin{thm}{\sc [Bipartite Ear Decomposition Theorem]}\newline
\label{thm:odd-ear-decomposition}
A bipartite graph $G$, distinct from $K_2$, is matching covered if and only if each of its conformal cycles extends to a bipartite ear decomposition of $G$.
\end{thm}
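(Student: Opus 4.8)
The plan is to prove both implications, the reverse one by a short induction and the forward one — which is the substance — by growing the ear decomposition greedily from the given conformal cycle.

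For the reverse implication I would use that the hypothesis supplies a bipartite ear decomposition $(G_0,G_1,\dots,G_r)$ of $G$ (any conformal cycle extends to one), and show by induction on $i$ that each $G_i$ is \mc. The base case is immediate since $G_0$ is an even cycle and every even cycle is \mc. For the step I write $G_{i+1}=G_i\cup P_i$ with $P_i$ an odd ear joining $u,v\in V(G_i)$; as $G$ is bipartite, $u,v$ lie on opposite sides of the bipartition $(A,B)$, say $u\in A$, $v\in B$. The one fact I would isolate is that a bipartite \mcg~is \emph{elementary}: for every nonempty proper $S\subseteq A$ one has $|N(S)|>|S|$. This ``strict Hall'' condition forces $G_i-u-v$ to have a \pema. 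Given that, every edge of $G_{i+1}$ lies in a \pema: an edge of $G_i$ is caught by combining a \pema~of $G_i$ through it (available since $G_i$ is \mc) with the short cover of $P_i$ that matches only its internal vertices; an edge of $P_i$ is caught either by that same short cover or, when it must be a matching edge of the long cover, by matching all of $P_i$ through it and completing with a \pema~of $G_i-u-v$. Hence $G=G_r$ is \mc.

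For the forward implication, let $G$ be bipartite \mc, distinct from $K_2$, with conformal cycle $C$. I would prove the key lemma: \emph{if $H$ is a conformal \mc~proper subgraph of $G$, then some ear $P$ of $H$ has $H\cup P$ again conformal and \mc}. Granting this, set $H:=C$ (which is \mc~and, by hypothesis, conformal) and enlarge $H$ by such ears; each step strictly increases $|E(H)|$, so the process halts only at $H=G$, giving a bipartite ear decomposition with $G_0=C$. To prove the lemma I fix a \pema~$M$ of $G$ whose restriction to $H$ is a \pema~of $H$, so $N_0:=M\setminus E(H)$ is a \pema~of $G-V(H)$. If $V(H)=V(G)$, any edge $e=xy\in E(G)\setminus E(H)$ is itself an odd ear; $H+e$ is conformal (same vertex set) and \mc, since a \pema~of $H-x-y$ together with $e$ covers $e$. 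If $V(H)\subsetneq V(G)$, connectedness gives a crossing edge $e=uv$ with $u\in V(H)$, $v\notin V(H)$; as $G$ is \mc, $e$ lies in a \pema~$N$, hence on an $M$-alternating cycle $R$ of $M\triangle N$. The crucial observation is that no $M$-edge of $R$ crosses between $V(H)$ and its complement — an $M$-edge at a vertex of $V(H)$ stays inside $H$ — so every crossing edge of $R$ lies in $N$. Tracing $R$ from $u$ out through $v$ until it first re-enters $V(H)$, at a vertex $u'$, extracts an odd path $P$ whose internal vertices avoid $V(H)$; a parity check, together with the fact that the $M$-edge at $u$ forces a second $V(H)$-vertex on $R$ (so $u'\neq u$), shows $P$ is a genuine ear. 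Conformality of $H\cup P$ survives because the internal vertices of $P$ are matched among themselves by $M$-edges, so deleting them from $N_0$ leaves a \pema~of $G-V(H\cup P)$; and $H\cup P$ is \mc~by the very ear-addition argument used above.

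I expect the main obstacle to be this forward lemma — specifically, extracting a \emph{single} odd ear from the alternating cycle $R$ while simultaneously certifying that both conformality and matching-coveredness pass to $H\cup P$. The alternating-cycle bookkeeping (that all crossings are non-matching edges, the parity of the extracted sub-path, and $u'\neq u$) is where the care lies, and the argument throughout rests on having the elementary-bipartite (strict Hall) characterization available as a black box.
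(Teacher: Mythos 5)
The paper does not prove this theorem: it is stated as a known result attributed to Hetyei and cited via Lov\'asz and Plummer, so there is no in-paper argument to compare against. Your proof is correct and is essentially the standard textbook argument. The reverse direction (induction along the ear decomposition, using that a bipartite \mcg\ other than $K_2$ satisfies the strict Hall condition $|N(S)|>|S|$, so that $G_i-u-v$ is matchable for ends $u,v$ of an odd ear) is sound, and that ``elementary'' characterization is indeed the one external fact you need as a black box. In the forward direction, the key lemma and its proof via the $M$-$N$-alternating cycle $R$ are handled at exactly the delicate points: since $M$ restricts to a \pema\ of $H$ and to a \pema\ of $G-V(H)$, no $M$-edge of $R$ crosses $\partial(V(H))$, so the first and last edges of the extracted path $P$ are both $N$-edges, which by alternation forces $P$ to be odd; the $M$-edge of $R$ at $u$ lies inside $H$, which guarantees the first re-entry vertex $u'$ is distinct from $u$; and the internal vertices of $P$ are perfectly matched among themselves by edges of $N_0:=M\setminus E(H)$, which is precisely what preserves conformality of $H\cup P$. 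Matching-coveredness of $H\cup P$ then follows from the same ear-addition argument as in the reverse direction. The only point worth flagging is cosmetic: the ``if'' direction of the statement should be read as presupposing that $G$ has at least one conformal cycle (equivalently, at least one bipartite ear decomposition), since otherwise the condition is vacuous; your proof implicitly and reasonably reads it that way.
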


For the Cube graph, depicted in Figure \ref{fig:cube}, the reader may observe that each conformal cycle extends to a bipartite ear decomposition; furthermore, the non-conformal $6$-cycle (shown using dashed lines) does not extend to a bipartite ear decomposition.

Theorem \ref{thm:odd-ear-decomposition} implies that any bipartite \mcg\ (distinct from $K_2$)  can be constructed in a straightforward manner, from any conformal cycle, by adding a single ear at a time so that each subgraph is also \mcg.  Unfortunately, in the case of nonbipartite \mcg s, one can not restrict to the addition of single ears. For instance, in the case of $K_4$, we must start from $C_4$; now observe that we must add the remaining two edges simultaneously in order to get a bigger \mc\ subgraph (that is, $K_4$ itself). \cite{lopl86} proved the surprising result that every \mcg\ may be constructed, from any conformal cycle, by adding either a single ear or a ``double ear'' at a time so that each subgraph is also \mc. We state this more formally below.

Given a graph $G$ and (proper) subgraph $H$, a {\em double ear} of $H$ in $G$ is a pair of vertex-disjoint single ears of $H$ (in $G$). For a \mcg~$G$, a sequence of \mc\ subgraphs $(G_0, G_1, \dots, G_r)$ is called an {\em ear~decomposition} of $G$ if (i)~$G_0$ is an even cycle, (ii)~$G_r=G$, and (iii)~$G_{i+1}=G_i \cup R_i$ where $R_i$ is either a single or a double ear  of $G_i$ (in $G$) for each $i\in \{0,1,\ldots,r-1\}$. As in the case of bipartite ear decompositions, it is easy to see that each subgraph in an ear decomposition is also conformal. We are now ready to state the aforementioned theorem of Lov\'asz and Plummer.

\begin{thm}{\sc [Ear Decomposition Theorem]}\newline
\label{thm1.4}
 Each conformal cycle of a \mcg\ $G$ extends to an ear {\mbox decomposition} of $G$.
\end{thm}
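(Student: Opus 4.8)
The plan is to reduce the theorem to a single \emph{growth lemma} and then iterate. The growth lemma asserts: if $H$ is a proper conformal \mc\ subgraph of a \mcg\ $G$, then there is a single or double ear $R$ of $H$ in $G$ such that $H \cup R$ is again \mc\ (and hence conformal, by the observation already recorded before the statement). Granting this, the theorem follows at once. The given conformal cycle $C$ is itself an even cycle, hence \mc, so we set $G_0 := C$ and apply the growth lemma repeatedly to produce $G_0 \subsetneq G_1 \subsetneq \cdots$, each $G_i$ being \mc\ and conformal. Every ear adds at least one edge, and $G$ is finite, so the process terminates; since the growth lemma applies to \emph{every} proper conformal \mc\ subgraph, it can terminate only when the current subgraph equals $G$ (note $E(H)=E(G)$ forces $V(H)=V(G)$, as $G$ is connected with no isolated vertex). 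Thus $(G_0, \dots, G_r)$ with $G_r=G$ is an ear decomposition extending $C$.

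The heart of the argument is the growth lemma, which I would prove by an alternating-cycle analysis. Fix a \pema\ $M$ of $G$ with $M \cap E(H)$ a \pema\ of $H$; such $M$ exists because $H$ is conformal. As $G$ is connected and $H$ is proper, choose an edge $e \in E(G) \setminus E(H)$ incident with $V(H)$, and, using that $G$ is \mc, a \pema\ $M'$ of $G$ containing $e$. Let $Q$ be the cycle of the symmetric difference $M \triangle M'$ through $e$. Since $Q$ meets $V(H)$, and every vertex of $Q$ lying in $V(H)$ has its $M$-edge inside $H$ (and on $Q$), the cycle $Q$ genuinely uses edges of $E(H)$. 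The key structural observation is that each maximal segment of $Q$ using edges outside $E(H)$ is an \emph{odd} path whose internal vertices avoid $V(H)$ --- a genuine single ear of $H$ --- and that distinct such segments are pairwise vertex-disjoint, being separated along $Q$ by nontrivial arcs of $E(H)$. Denote these ears $P_1, \dots, P_t$ with $t \ge 1$.

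If $t=1$ and $H \cup P_1$ is \mc\ we add the single ear $P_1$; if some vertex-disjoint pair $P_i \cup P_j$ satisfies that $H \cup P_i \cup P_j$ is \mc\ we add that double ear. The main obstacle --- and the reason double ears are genuinely unavoidable, as the $K_4$ example preceding the statement shows --- is to prove that one never needs three or more ears simultaneously, i.e.\ that a single ear or a suitable vertex-disjoint pair always suffices. Here I would argue by \emph{admissibility}: adding $P_1$ alone fails to keep $H \cup P_1$ \mc\ only if some edge $f$ on $P_1$ lies in no \pema\ of $H \cup P_1$, since every edge of $H$ remains covered (any \pema\ of $H$ extends across $P_1$ using the $M$-edges that match the internal vertices of $P_1$ among themselves). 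As $f$ is nonetheless admissible in $G$, a second alternating cycle through $f$ supplies an ear vertex-disjoint from $P_1$ whose addition restores matching coveredness; verifying that this produces exactly a double ear, and that the required perfect matchings exist, is the delicate point. Conformality of $H \cup R$ is then immediate, since $M$ restricted to the remaining vertices is still a \pema. I expect controlling this admissibility/second-cycle argument, and confirming that it terminates in at most a double ear, to be the crux of the whole proof.
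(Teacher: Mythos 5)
This is Lov\'asz and Plummer's two-ear theorem, which the paper states as a known result and does not prove; so the comparison can only be against the standard proof in the literature. Your overall architecture --- a growth lemma asserting that every proper conformal \mc\ subgraph $H$ of $G$ can be enlarged by a single or double ear to a bigger \mc\ subgraph, followed by iteration and a termination argument --- is exactly the standard reduction, and that part of your write-up is sound. So is the alternating-cycle setup: the segments of $Q$ outside $E(H)$ are indeed pairwise vertex-disjoint odd paths with internal vertices off $V(H)$ (each vertex of $V(H)\cap V(Q)$ carries its $M$-edge, which lies in $E(H)$, on $Q$, so such vertices cannot be internal to a segment and the segments begin and end with $M'$-edges), and your observation that $H\cup P_1$ can only fail to be \mc\ because some edge \emph{of $P_1$} is not allowed is correct.

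The genuine gap is that the crux --- one or two ears always suffice --- is asserted rather than proved, and the sketch you give for it does not work as stated. When an edge $f$ of $P_1$ is not allowed in $H\cup P_1$, a second alternating cycle through $f$ yields ears of $H\cup P_1$, not of $H$; such an ear may attach to \emph{internal} vertices of $P_1$, in which case the union with $P_1$ is not a double ear of $H$ at all. Even when the new path happens to be vertex-disjoint from $P_1$, you give no reason why adding that particular pair renders \emph{every} edge of both paths allowed (different non-allowed edges may call for different repairing cycles), nor any argument excluding the need for three or more simultaneous ears --- which is precisely the content of the theorem and the reason its known proofs (via a maximal conformal \mc\ subgraph and a barrier/allowed-edge analysis, or via the dual statement on removable ears) are substantially more involved. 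As it stands, the proposal is an accurate road map with the hard step missing, not a proof.
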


  Figure \ref{fig:ear-dec-petersen} shows an ear decomposition of the nonbipartite graph~$R_8^{-}$ that will play an important role in our work, and is obtained from the bicorn $R_8$, shown in Figure~\ref{fig:$K_{2,3}$-based graph}, by deleting an edge; each ear addition is denoted by a thick line.
  \begin{figure}[!htb]
      \centering
       \begin{tikzpicture}[every node/.style={draw=black, circle,scale=0.66}, scale=0.5]
               \node[fill=black] (a1) at (2,2){};
               \node (a2) at (3.5,2){};
               \node[fill=black] (a3) at (5,2){};
               \node[draw=none] (a4) at (7,0){};
               \node (a4) at (0,4){};
               \node[fill=black] (a5) at (3.5,4){};
               \node (a6) at (7,4){};
               \draw (a1) -- (a2) -- (a3) -- (a6) -- (a5) -- (a4) -- (a1); 
    \end{tikzpicture}
    \hspace*{20pt}
    \begin{tikzpicture}[every node/.style={draw=black, circle,scale=0.66}, scale=0.5]
               \node[fill=black] (a1) at (2,2){};
               \node (a2) at (3.5,2){};
               \node[fill=black] (a3) at (5,2){};
               \node (a7) at (7,0){};
               \node[fill=black] (a8) at (0,0){};
               \node (a4) at (0,4){};
               \node[fill=black] (a5) at (3.5,4){};
               \node (a6) at (7,4){};
               \draw (a1) -- (a2) -- (a3) -- (a6) -- (a5) -- (a4) -- (a1); 
               \draw[ultra thick] (a4) -- (a8) -- (a7) -- (a3);
    \end{tikzpicture}
    \hspace*{20pt}
    \begin{tikzpicture}[every node/.style={draw=black, circle,scale=0.66}, scale=0.5]
               \node (a1) at (2,2){};
               \node (a2) at (3.5,2){};
               \node (a3) at (5,2){};
               \node (a7) at (7,0){};
               \node (a8) at (0,0){};
               \node (a4) at (0,4){};
               \node (a5) at (3.5,4){};
               \node (a6) at (7,4){};
               \draw (a1) -- (a2) -- (a3) -- (a6) -- (a5) -- (a4) -- (a1); 
               \draw (a4) -- (a8) -- (a7) -- (a3);
               \draw[ultra thick] (a8) -- (a1);
               \draw[ultra thick] (a6) -- (a7);
    \end{tikzpicture}
      \caption{an ear decomposition of $R_8^{-}$}
      \label{fig:ear-dec-petersen}
  \end{figure}
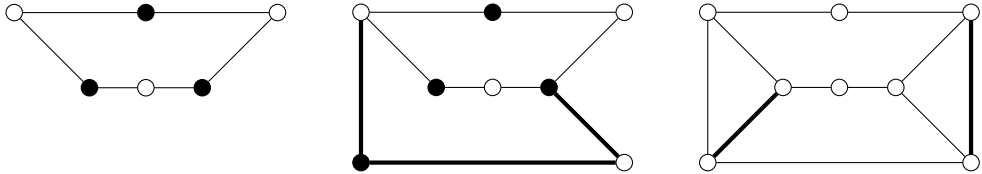

The statement of Theorem \ref{thm1.4} begs the question: for which \mcg s, is it possible to extend each even cycle to an ear decomposition? Clearly, this is possible only for those \mcg s that are \ce. This provides an alternative motivation for characterizing \ce\ graphs.

 \begin{prop}{\sc [An Alternative Viewpoint Of Cycle-Extendability]}\newline
     A \mcg\ $G$ is \ce\ if and only if each of its even cycles extends to an ear decomposition of $G$. \qed
 \end{prop}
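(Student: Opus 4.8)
The plan is to reduce the statement to two facts already available in the excerpt: the Ear Decomposition Theorem (Theorem~\ref{thm1.4}), which guarantees that \emph{every} conformal cycle of a \mcg\ extends to an ear decomposition, and the observation --- made just before the statement --- that every subgraph occurring in an ear decomposition is conformal. Recall also the equivalent formulation of cycle-extendability noted in the introduction: a \mcg\ $G$ is \ce\ if and only if each of its even cycles $C$ is conformal, i.e.\ $G - V(C)$ is matchable. With these in hand, each direction of the biconditional becomes essentially a one-line deduction.

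For the forward implication, I would assume $G$ is \ce\ and let $C$ be an arbitrary even cycle. By definition $C$ is conformal, so Theorem~\ref{thm1.4} applies verbatim and produces an ear decomposition of $G$ whose initial cycle $G_0$ equals $C$. This is exactly the desired conclusion.

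For the converse, I would assume that every even cycle of $G$ extends to an ear decomposition and again let $C$ be an arbitrary even cycle. Fixing an ear decomposition $(G_0,\dots,G_r)$ with $G_0 = C$, the cited observation tells us that each $G_i$ --- and in particular $G_0 = C$ --- is conformal. Hence every even cycle of $G$ is conformal, which is precisely the condition for $G$ to be \ce.

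Since both ingredients are furnished by the preceding discussion, I do not expect a genuine obstacle; the proof is a repackaging of Theorem~\ref{thm1.4} together with the (easy) fact that the base cycle of an ear decomposition is conformal. The only point meriting a sentence of justification is the equivalence between conformality of $C$ and extendability of the perfect matchings of $C$: if $M'$ is a perfect matching of $G - V(C)$, then $M'$ together with either perfect matching of the even cycle $C$ yields a perfect matching of $G$; conversely, restricting a perfect matching of $G$ that contains a perfect matching of $C$ to $G - V(C)$ gives the required matching. As this equivalence is already recorded in the introduction, in the final write-up I would simply invoke it rather than reprove it.
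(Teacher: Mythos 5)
Your proposal is correct and matches the paper's (implicit) argument exactly: the paper leaves this proposition as an immediate consequence of Theorem~\ref{thm1.4} together with the preceding observation that every subgraph in an ear decomposition --- in particular the initial cycle $G_0$ --- is conformal, which is precisely the two-step deduction you give.
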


We conclude this section by describing a special class of nonbipartite \mcg s that play a crucial role in the theory of \mcg s, as well as in our work. Before doing so, we remark that, in an ear decomposition of a \mcg, a double ear is added only when neither of the constituent single ears can be added in order to obtain a matching covered graph. 

Let $G$ be a nonbipartite \mcg\ and $(G_0,G_1,G_2,\ldots,G_r=G)$ be an ear decomposition. Note that $G_0$ is an even cycle. Let $G_k$ denote the first nonbipartite subgraph in this sequence. It follows from our above remark that $G_k$ is also the first subgraph in the sequence that is obtained by adding a double ear, say $R_{k-1}$, to the previous (bipartite) subgraph $G_{k-1}$. We say that $G_{k}$ is a {\em near-bipartite graph}. Below, we provide an alternative definition that is independent of the ear decomposition.

For a \mcg\ $G$, an odd path $P$ is a {\em removable single ear} of $G$ if each internal vertex of $P$ (if any) has degree two in $G$ and the graph $G-P$ is also \mc. (Here, by $G-P$, we mean the graph obtained from $G$ by deleting all of the edges and internal vertices of $P$.) Likewise, a pair of vertex-disjoint odd paths $R:=\{P_1,P_2\}$ is a {\em removable~double~ear} of $G$ if each internal vertex of $P_1$ as well as $P_2$ has degree two in $G$ and the graph $G-P_1-P_2$ is \mc. A {\em near-bipartite graph} is a \mcg, say $G$, that has a removable double ear $R:=\{P_1,P_2\}$ such that $G-R:=G-P_1-P_2$ is bipartite. In this sense, near-bipartite graphs comprise a subset of nonbipartite \mcg s whose members are closest to being bipartite. 


In the next section, we discuss a reduction of the Decision Problem~\ref{main-problem} to a more restricted class of \mcg s.

\subsection{Irreducible graphs}
\label{sec:irreducible-graphs}
Given a graph $G$ and a parallel edge $e$, we say that $G-e$ is obtained from $G$ by an application of {\em parallel reduction}. Observe that $G$ is \mc~if and only if $G-e$ is \mc; furthermore, $G$ is \ce~if and only if $G-e$ is \ce. 

Let $G$ be a graph that has a path $P:=wxyz$ of length three, each of whose internal vertices, $x$~and~$y$, is of degree two in $G$. Let $J$ denote the graph obtained from $G$ by replacing the path $P$ with a single edge $e$ joining $w$ and $z$. That is, $J:=G-P+e$. We say that $J$ is obtained from $G$ by an application of {\em series reduction}. The reader may observe that $G$ is \mc~if and only if $J$ is \mc~except when $J$ is $K_2$; furthermore, $G$ is \ce~if and only if $J$ is \ce.

A graph $G$ is {\em irreducible} if it is simple and its degree two vertices comprise a stable set. The above observations prove the following. 

\begin{prp}
\label{prop:irreducible-reduction}
    Let $H$ be an irreducible (\mc) graph that is obtained from a \mcg~$G$ by repeated applications of series and parallel reductions. Then $G$ is \ce~if and only if $H$ is \ce. \qed
\end{prp}

Thus, in order to settle the complexity status of Decision Problem~\ref{main-problem}, it suffices to focus on the following decision problem.

\begin{prb}
    Given an irreducible \mcg~$G$, decide whether $G$ is \ce~or not.
\end{prb}

In the next section, we discuss a necessary matching-theoretic condition for a graph to be \ce. In order to do so, we shall require some concepts from the theory of \mcg s.
\section{Applications of the tight cut decomposition theory}
\label{section-3}
For a nonempty proper subset $X$ of the vertices of a graph $G$, we denote by $\partial(X)$ {\em the cut associated with $X$}, that is, the set of all edges that have one end in $X$ and the other end in $\overline{X} := V (G) - X$. We refer to $X$ and $\overline{X}$ as the {\em shores} of $\partial(X)$. For a vertex $v$, we simplify the notation $\partial(\{v\})$ to $\partial(v)$, and we refer to such a cut as {\em trivial}. 

For a cut $\partial(X)$ of a graph $G$, we denote by $G/(X \rightarrow x)$, or simply by $G/X$, the graph obtained from $G$ by shrinking the shore~$X$ to a single vertex $x$ called the {\em contraction vertex}. The graph $G/\overline{X}$ is defined analogously. The graphs $G/\overline{X}$ and $G/X$ are called the {\em $\partial(X)$-contractions of $G$}. Observe that each edge in either $\partial(X)$-contraction corresponds to an edge in $G$. Conversely, each edge of $G$ that is not in $\partial(X)$ corresponds to an edge in precisely one of the two $\partial(X)$-contractions, whereas each edge of $G$ that is in $\partial(X)$ corresponds to an edge in both $\partial(X)$-contractions; it is customary and convenient to use the same label for all these edges.  For the graph shown in Figure~\ref{fig:not a cycle-extendable graph}, the blue line indicates a cut; one of the contractions is $K_4$, whereas the other contraction is obtained from $K_{3,3}$ by replacing any vertex by a triangle.

\begin{figure}[!htb]
    \centering
         \begin{tikzpicture}[every node/.style={draw=black, circle,scale=0.66,fill=white}, scale=0.5,rotate=180]
                \node (a1) at (0,0){};
                \node (a2) at (0,5){};
                \node (a3) at (-1,7){};
                \node (a4) at (1,7){};
                \draw[ultra thick] (a2) -- (a3);
                \draw (a3) -- (a4) -- (a2);
                
                \node (a5) at (8,0){};
                \node (a6) at (8,5){};
                \node (a7) at (7,7){};
                \node (a8) at (9,7){};
                \draw (a6) -- (a7);
                \draw[ultra thick] (a7) -- (a8);
                \draw (a8) -- (a6);

                \node (a9) at (16,0){};
                \node (a10) at (16,6){};
                \draw[ultra thick] (a10) -- (a9);
                \draw (a10) -- (a1);
                \draw[ultra thick] (a10) -- (a5);

                \draw (a6) -- (a5);
                \draw[ultra thick] (a7) -- (a1);
                \draw[ultra thick] (a8) -- (a9);
                
                \draw[ultra thick] (a2) -- (a5);
                \draw (a4) -- (a9);
                \draw[in=150, out=240,ultra thick] (a3) to (a1);
                \node[draw=none] (a21) at (-3,8){};
                \node[draw=none] (a22) at (3,8){};
                \draw[in=270, out=270,ultra thick,looseness=2.2,blue] (a21) to (a22);
                \node[draw=none] (a31) at (5,8){};
                \node[draw=none] (a32) at (11,8){};
                \draw[in=270, out=270,ultra thick,looseness=2.2,blue] (a31) to (a32);
        \end{tikzpicture}    
        
 \caption{a \mcg~and its nontrivial tight cuts}
  \label{fig:not a cycle-extendable graph}
\end{figure}

\subsection{Tight cut decomposition, bricks and braces}
\label{Section:tight-cut}
Let $G$ be a \mcg. A cut $\partial(X)$ is a {\em tight cut} if $|M \cap \partial(X)| = 1$ for every perfect matching $M$. In Figure~\ref{fig:not a cycle-extendable graph}, the blue lines indicate nontrivial tight cuts. It is easy to see that if $\partial(X)$ is a nontrivial tight cut then each $\partial(X)$-contraction is a \mcg\ that has fewer vertices than $G$. If either of the $\partial(X)$-contractions has a nontrivial tight cut, then that graph can be further decomposed into smaller \mcg s. We may repeat this procedure until we obtain a list of \mcg s --- each of which is free of nontrivial tight cuts. This process is known as the {\em tight cut decomposition procedure}.

A \mcg\ free of nontrivial tight cuts is called a {\em brace} if it is bipartite; otherwise, it is called a {\em brick}. Thus, an application of the tight cut decomposition procedure to a \mcg\ results in a list of bricks and braces. For the graph shown in Figure~\ref{fig:not a cycle-extendable graph}, an application of the tight cut decomposition procedure yields two copies of $K_4$ and one copy of $K_{3,3}$. We now make a simple observation pertaining to bricks and braces.

Let $S:=\{u,v\}$ denote a \tvc\ in a \mcg\ $G$ on six or more vertices. Let $J$ denote a component of $G-S$. If $J$ is a nontrivial odd component then $\partial(V(J))$ is a nontrivial tight cut; whereas, if $J$ is an even component then $\partial(V(J) \cup \{u\})$ is a nontrivial tight cut. This immediately proves the following well-known fact. 
 
\begin{prop}
\label{prp:brick-brace-3-connected}
 Every simple brick/brace, except $K_2$ and $C_4$, is $3$-connected.\qed   
\end{prop}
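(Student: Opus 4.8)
Prove Proposition~\ref{prp:brick-brace-3-connected}: every simple brick or brace, except $K_2$ and $C_4$, is $3$-connected. By definition a brick or brace is a \mcg\ free of nontrivial tight cuts; I want to show that such a graph on at least three vertices (other than $C_4$) cannot have a vertex-cut of size one or size two.

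**Plan.** The argument has two parts, corresponding to cuts of size one and size two, and I would exploit the observation recorded immediately before the statement in the excerpt. First I would dispose of cut-vertices. A \mcg\ distinct from $K_2$ is $2$-connected (as noted in Section~\ref{Section1.1}), so no simple brick/brace other than $K_2$ has a cut-vertex; this handles connectivity~$1$ for free. The substance is ruling out \tvc s. So suppose, for contradiction, that $G$ is a simple brick or brace on $n \ge 6$ vertices possessing a \tvc\ $S := \{u,v\}$. I would then apply verbatim the observation preceding the proposition: let $J$ be any component of $G - S$. If $J$ is a nontrivial odd component, then $\partial(V(J))$ is a nontrivial tight cut; if $J$ is even, then $\partial(V(J) \cup \{u\})$ is a nontrivial tight cut. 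Either way $G$ possesses a nontrivial tight cut, contradicting that $G$ is a brick or brace. I would include the quick parity check confirming that at least one such component $J$ exists and that the indicated cuts are genuinely nontrivial (both shores have at least two vertices), which is where the hypothesis $n \ge 6$ is used.

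**The gap at small orders.** The observation as quoted assumes $G$ has six or more vertices, so I must separately treat simple \mcg s free of nontrivial tight cuts on fewer than six vertices and check that the only one admitting a \tvc\ is $C_4$. Since such a graph is $2$-connected and matching covered, $n$ is even and at least $2$; the cases $n=2$ (giving $K_2$, excluded) and $n=4$ remain. For $n=4$, a simple $2$-connected graph on four vertices with a \tvc\ $\{u,v\}$ has $G-S$ consisting of the two remaining vertices as isolated points (they are nonadjacent, else $\{u,v\}$ would not separate them); matching-coveredness then forces each to be joined to both of $u,v$, and $uv \notin E(G)$ would have to be checked — this yields exactly $C_4$, the stated exception. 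I would verify there is no other simple matching-covered graph on four vertices that is free of nontrivial tight cuts and has a \tvc.

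**Main obstacle.** The conceptual content is entirely contained in the pre-stated observation, so the heart of the proof is a one-line appeal to it. The only real care needed — and the step most likely to hide an error — is the small-order bookkeeping: confirming the proposition's two named exceptions $K_2$ and $C_4$ are genuinely the \emph{only} graphs that escape the general argument, and that the nontrivial-tight-cut construction above always produces a cut with both shores of size $\ge 2$ when $n \ge 6$. Establishing that the even-component cut $\partial(V(J) \cup \{u\})$ is tight (not merely a cut) requires the standard fact that in a \mcg\ a cut $\partial(X)$ is tight precisely when every perfect matching meets it exactly once, together with a parity argument on $|V(J)|$; I would lean on the fact that the shores arising from a \tvc\ are forced to have odd intersection with every perfect matching, which is exactly the content the preceding paragraph supplies.
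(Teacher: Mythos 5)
Your proposal is correct and follows essentially the same route as the paper: the paper's entire proof is the observation stated immediately before the proposition (a nontrivial odd component of $G-S$ yields the nontrivial tight cut $\partial(V(J))$, an even component yields $\partial(V(J)\cup\{u\})$), and your argument is a direct application of it. Your additional bookkeeping --- verifying that some nontrivial component exists when $n\ge 6$, that both shores have size at least two, and that $C_4$ is the only small-order exception --- is exactly the routine detail the paper leaves implicit, and it checks out.
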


A \mcg\ may admit several applications of the tight cut decomposition procedure. However,~\cite{lova87} proved the following remarkable result.
\begin{thm}{\sc [Unique Tight Cut Decomposition Theorem]}\newline
\label{THE UNIQUE DECOMPOSITION THEOREM}
     Any two applications of the tight cut decomposition procedure to a \mcg~yield the same list of bricks and braces (up to multiplicities of edges).
\end{thm}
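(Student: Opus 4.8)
The plan is to argue by strong induction on $|V(G)|$, with the combinatorial engine being an \emph{uncrossing} property of tight cuts. If $G$ has no nontrivial tight cut, then $G$ is itself a brick or a brace and the list is $(G)$; so assume $G$ has a nontrivial tight cut and consider two applications of the procedure whose first steps shrink the shores of tight cuts $\partial(X)$ and $\partial(Y)$, respectively. Since each of $G/X$, $G/\overline{X}$, $G/Y$, $G/\overline{Y}$ is a \mcg\ with strictly fewer vertices than $G$, the induction hypothesis guarantees that each of them has a \emph{well-defined} list; thus the list produced by the first application is the concatenation of the (unique) lists of $G/X$ and $G/\overline{X}$, and similarly for the second. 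Consequently it suffices to prove the following \emph{confluence} statement: for any two nontrivial tight cuts $\partial(X)$ and $\partial(Y)$, splitting first along $\partial(X)$ and splitting first along $\partial(Y)$ yield the same multiset of bricks and braces (up to multiplicities of edges).

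The main tool, which I would establish first, is an uncrossing lemma: if $\partial(X)$ and $\partial(Y)$ are tight cuts that \emph{cross} (meaning all four of $X\cap Y$, $X\setminus Y$, $Y\setminus X$ and $\overline{X}\cap\overline{Y}$ are nonempty), then, choosing the shore representatives so that $|X\cap Y|$ is odd, both $\partial(X\cap Y)$ and $\partial(X\cup Y)$ are tight cuts. The proof is a short counting argument: for every \pema\ $M$ the submodular inequality for cuts gives $|M\cap\partial(X\cap Y)|+|M\cap\partial(X\cup Y)|\le |M\cap\partial(X)|+|M\cap\partial(Y)|=2$; tightness forces $|X|$ and $|Y|$ to be odd, so $|X\cup Y|=|X|+|Y|-|X\cap Y|$ is odd as well, whence each term on the left is odd and therefore at least $1$, and equality throughout shows both cuts are tight. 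I would also record the routine \emph{pullback} fact that a cut of a $\partial(X)$-contraction is tight there if and only if the corresponding cut of $G$ is tight, so that tightness of cuts can be tracked through successive contractions.

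For the confluence statement I would split into two cases according to whether $\partial(X)$ and $\partial(Y)$ cross (a property insensitive to the choice of shore). In the \emph{laminar} case one may assume, after replacing $Y$ by $\overline{Y}$ if necessary, that $Y\subseteq X$; then $\partial(Y)$ survives as a tight cut of $G/\overline{X}$ while $\partial(X)$ survives as a tight cut of $G/Y$, and a direct check shows that both orders of splitting continue to the \emph{same} decomposition, the one separating the three regions $Y$, $X\setminus Y$ and $\overline{X}$. Applying the induction hypothesis to the resulting smaller graphs then yields equal lists. In the \emph{crossing} case the uncrossing lemma supplies tight cuts $\partial(X\cap Y)$ and $\partial(X\cup Y)$; using the pullback fact one verifies that after splitting $G$ along $\partial(X)$ the cut $\partial(X\cap Y)$ remains tight inside $G/\overline{X}$ and the cut with shore $\overline{X}\cap\overline{Y}$ remains tight inside $G/X$, and symmetrically after splitting along $\partial(Y)$. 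Hence both the $\partial(X)$-first and the $\partial(Y)$-first procedures can be continued to one common decomposition separating the four quadrants $X\cap Y$, $X\setminus Y$, $Y\setminus X$, $\overline{X}\cap\overline{Y}$; this common refinement is manifestly symmetric in $X$ and $Y$, so invoking the induction hypothesis on its (smaller) constituents forces the two lists to coincide.

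I expect the crossing case to be the main obstacle, and within it two points deserve care. First, the uncrossing lemma must be invoked with the correct diagonal pair of derived shores; this hinges on the parity bookkeeping that tight cuts have odd shores and that, for crossing cuts, exactly one of the two diagonal pairs of quadrants consists of odd sets. Second --- and this is the genuinely fiddly part --- one must confirm that $\partial(X\cap Y)$ and $\partial(X\cup Y)$ really do pull back to tight cuts of the intermediate contractions, and that assembling the four quadrants produces the same terminal multiset of bricks and braces irrespective of order. Here the qualifier ``up to multiplicities of edges'' is essential: contracting different shores can create different numbers of parallel edges between the same pair of vertices, so these multiplicities must be declared irrelevant for the two lists to agree. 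By comparison, the laminar case and the base case are routine.
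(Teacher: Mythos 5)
The paper does not prove this theorem at all: it is quoted from Lov\'asz (1987) with a citation, so there is no in-paper proof to compare against. Your proposal must therefore stand on its own, and while its skeleton (induction on $|V(G)|$, reduction to a two-cut confluence statement, the uncrossing lemma with the parity normalisation, the laminar case) is exactly the standard opening of the known proofs and is carried out correctly --- in particular your counting proof of the uncrossing lemma is right, and in the laminar case the middle piece really is the \emph{same} graph under either order, namely $G$ with $Y$ and $\overline{X}$ both contracted --- the crossing case contains a genuine gap at the decisive moment.

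The claim that the two procedures ``can be continued to one common decomposition separating the four quadrants'' which is ``manifestly symmetric in $X$ and $Y$'' is false. After two rounds of splitting, the pieces housing $X\cap Y$ and $\overline{X}\cap\overline{Y}$ do coincide for both orders (they are $G/\overline{X\cap Y}$ and $G/(X\cup Y)$), but the piece housing $X\setminus Y$ does not: splitting along $\partial(X)$ first yields the graph on $X\setminus Y$ with two contraction vertices absorbing $X\cap Y$ and $\overline{X}$ respectively, whereas splitting along $\partial(Y)$ first yields the graph on $X\setminus Y$ with contraction vertices absorbing $Y$ and $\overline{X}\cap\overline{Y}$. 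The set $Y\setminus X$ changes sides between the two partitions. The submodularity identity you use does give the (unstated but needed) fact that $E(X\setminus Y,\,Y\setminus X)=\emptyset$, which keeps the edges leaving $X\setminus Y$ consistent, but the edges joining the two contraction vertices correspond to $E(X\cap Y, Y\setminus X)\cup E(X\cap Y,\overline{X}\cap\overline{Y})$ in one order and to $E(Y\setminus X,\overline{X}\cap\overline{Y})\cup E(X\cap Y,\overline{X}\cap\overline{Y})$ in the other; these can differ even up to multiplicity (one empty, the other not), so the two middle pieces are in general non-isomorphic graphs. Consequently you cannot invoke the induction hypothesis to conclude their brick/brace lists agree --- the induction hypothesis only says each piece has a well-defined list, not that two \emph{different} graphs have the same list. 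Proving that these two differently-contracted middle pieces nevertheless decompose into the same bricks and braces is precisely the hard core of Lov\'asz's theorem, and it is the part your outline waves through. (A partial repair --- noting that $\partial(X\cap Y)$ is laminar with both $\partial(X)$ and $\partial(Y)$ and chaining equalities through it --- works only when $X\cap Y$ or $\overline{X}\cap\overline{Y}$ is a nontrivial shore, and the residual case where both odd quadrants are singletons still requires a separate, substantive argument.)
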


The following characterization of planar \ce\ braces is an immediate consequence of Proposition~\ref{prp:brick-brace-3-connected} and a result of~\cite[Corollary 2.3]{KlavzarSalem12}.

\begin{cor}
    \label{brace-characterize}
    The only simple planar \ce\ braces are $K_2$ and $C_4$. \qed
\end{cor}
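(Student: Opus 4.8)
The plan is to prove the two implications separately: the forward direction is a routine check, and the converse rests entirely on combining Proposition~\ref{prp:brick-brace-3-connected} with the cited connectivity bound.

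First I would confirm that $K_2$ and $C_4$ really are \ce\ braces. The graph $K_2$ counts as a brace by convention and is vacuously \ce, as it has no even cycle. The $4$-cycle $C_4$ is bipartite, \mc, and a direct check shows it has no nontrivial tight cut, so it is a brace; its only even cycle is $C_4$ itself, and since $C_4-V(C_4)$ is the empty graph (which is matchable), that cycle is conformal and hence $C_4$ is \ce.

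For the converse, let $G$ be a simple planar \ce\ brace with $G\neq K_2$, and aim to show $G=C_4$. Because a brace is bipartite, and because \ce\ for a bipartite graph is exactly the property of being $1$-cycle resonant, $G$ is a planar $1$-cycle resonant graph. Suppose for contradiction that $G\neq C_4$. Then Proposition~\ref{prp:brick-brace-3-connected} forces $G$ to be $3$-connected. At this point I would invoke \cite[Corollary 2.3]{KlavzarSalem12}, extracting from it the fact that a planar $1$-cycle resonant graph has connectivity at most $2$ and so cannot be $3$-connected. This contradicts the $3$-connectivity of $G$, so $G=C_4$, completing the converse.

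The only step with any real content --- and thus the main obstacle, such as it is --- is matching the hypotheses of the Klav\v{z}ar--Salem corollary to our situation: it speaks of bipartite graphs (braces are bipartite), planar graphs (assumed), and $1$-cycle resonant graphs (which for bipartite graphs coincides with \ce). I would stress that planarity is genuinely indispensable and not merely the ambient scope of the paper, since $K_{3,3}$ is itself a $3$-connected \ce\ brace; thus no argument relying on connectivity alone can succeed without the planarity hypothesis. Once the hypotheses are aligned, the corollary applies directly and the statement follows immediately, which is precisely why it is recorded as a corollary rather than proved from scratch.
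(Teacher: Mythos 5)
Your proposal is correct and follows essentially the same route as the paper: the paper derives this corollary immediately from Proposition~\ref{prp:brick-brace-3-connected} (simple braces other than $K_2$ and $C_4$ are $3$-connected) together with \cite[Corollary 2.3]{KlavzarSalem12}, exactly as you do. Your added verification of the forward direction and the remark that $K_{3,3}$ shows planarity is indispensable are correct but not part of the paper's (one-line) argument.
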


We now switch our attention to proving the following lemma that relates cycle-extendability with tight cuts. 

\begin{lem}
\label{prp:cycle-extendability-inherited-through-tight-cuts}
    Let $G$ be a \mcg\ and let $\partial(X)$ denote a nontrivial tight cut. If $G$ is \ce\ then both $\partial(X)$-contractions of $G$ are also \ce.
\end{lem}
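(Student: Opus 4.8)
The plan is to fix one of the two contractions, say $H := G/X$ with contraction vertex $x$ (the argument for $G/\overline{X}$ is identical by symmetry), and to verify the definition of \ce\ directly: for an arbitrary even cycle $C$ of $H$, I would produce a \pema\ of $H - V(C)$. Throughout I would use two standard facts: both shores of a nontrivial tight cut are odd, and --- the crux of the bookkeeping --- if $F$ is any \pema\ of $G$ then $|F \cap \partial(X)| = 1$. The proof then splits into two cases according to whether the contraction vertex $x$ lies on $C$.

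\emph{Case 1: $x \notin V(C)$.} Here every edge of $C$ lies inside $\overline{X}$, so $C$ is literally an even cycle of $G$. Since $G$ is \ce, $C$ is conformal in $G$, giving a \pema\ $N$ of $G - V(C)$. Picking a \pema\ $M$ of $C$ (whose edges also lie inside $\overline{X}$, so $M \cap \partial(X) = \emptyset$), the union $N \cup M$ is a \pema\ of $G$, so tightness forces $|N \cap \partial(X)| = 1$. Contracting $X$ then sends $N$ to a \pema\ of $H - V(C)$, the single cut edge of $N$ being exactly the edge that covers $x$.

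\emph{Case 2: $x \in V(C)$.} Now the two edges of $C$ at $x$ lift to two edges $e_1, e_2 \in \partial(X)$ of $G$, and deleting $x$ turns $C$ into a path $Q$ of $G$ whose interior lies in $\overline{X}$ and whose ends $a_1, b_1$ lie in $X$. To close $Q$ into an even cycle of $G$, I would work in the other contraction $G/\overline{X}$ and invoke the result of Little (any two edges of a \mcg\ lie on a common conformal cycle) to obtain a conformal --- hence even --- cycle $D$ through $e_1$ and $e_2$; deleting the contraction vertex of $G/\overline X$ from $D$ yields an even path $R \subseteq G[X]$ from $a_1$ to $b_1$ (with $R$ trivial and $C^* = Q$ in the degenerate situations $a_1 = b_1$ or $C$ a digon through $x$). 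Then $C^* := Q \cup R$ is an even cycle of $G$, its interiors in $\overline{X}$ and $X$ being disjoint. Applying cycle-extendability of $G$ gives a \pema\ $N$ of $G - V(C^*)$. The key parity observation is that $e_1$ and $e_2$ sit at odd cyclic distance along $C^*$, so every \pema\ of $C^*$ contains exactly one of them; combined with tightness (applied as in Case~1) this forces $N \cap \partial(X) = \emptyset$. Hence $N$ matches the set $W := \overline{X} \setminus V(C)$ internally, and since edges within $W$ coincide in $G$ and $H$, this restriction is precisely a \pema\ of $H - V(C)$.

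The routine parts are the oddness/parity bookkeeping and the degenerate sub-cases just noted. The main obstacle is Case~2: one must manufacture an even cycle $C^*$ of $G$ that correctly encodes $C$, and then arrange the lift so that exactly one cut edge is consumed on $C^*$. It is precisely the interplay between Little's theorem (to build the path $R$ inside $X$) and the odd cyclic distance between $e_1$ and $e_2$ that makes tightness deliver $N \cap \partial(X) = \emptyset$ rather than merely an \emph{even} number of cut edges; verifying that this distance is genuinely odd (equivalently, that the interior of $Q$ contributes an even number of intervening edges) is the delicate point on which the whole argument turns.
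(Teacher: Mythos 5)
Your proof is correct and follows essentially the same route as the paper's: the same split on whether the contraction vertex lies on the cycle, the same device of closing the cycle through the other shore via an even cycle of the other contraction, and the same parity-plus-tightness argument forcing the complementary matching off the cut. The only cosmetic difference is that you invoke Little's theorem to produce the auxiliary cycle through $e_1$ and $e_2$, whereas the paper obtains it more elementarily as the symmetric difference of two perfect matchings of $G/\overline{X}$, one containing each edge; both are valid, and your explicit justification of why exactly one of $e_1,e_2$ lies in each perfect matching of $C^{*}$ is a point the paper leaves implicit.
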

\begin{proof}
We let $G_1:=G/(X \rightarrow x)$ and $G_2:=G/(\overline{X} \rightarrow \overline{x})$.
Assume that $G$ is \ce. It suffices to prove that $G_1$ is \ce. To this end, let $Q_1$ denote an even cycle of~$G_1$.

If $x \notin V(Q_1)$ then $Q_1$ is a cycle of~$G$. We let $M$ denote a \pema\ of~$G-V(Q_1)$.
Observe that, since $M$ extends to a \pema\ of~$G$, and since $\partial(X)$ is tight,
$|M \cap \partial(X)|=1$. Consequently, $M_1:=M \cap E(G_1)$
is a \pema\ of $G_1 - V(Q_1)$.

Now suppose that $x \in V(Q_1)$, and let $e$ and $f$ denote the two edges of~$Q_1$ incident with~$x$.
Let $Q_2$ denote an even cycle of~$G_2$ containing $e$ and $f$. (To see why such a cycle exists,
consider the symmetric difference of two perfect matchings of~$G_2$: one containing $e$ and another containing $f$.)
Observe that $Q:=Q_1 \cup Q_2$ is an even cycle in~$G$, and let $M$ denote a \pema\ of~$G-V(Q)$.
Note that $M \cap \partial(X) = \emptyset$ and that $M_1:=M \cap E(G_1)$ is a \pema\ of~$G_1-V(Q_1)$.

In both cases, we have shown that $Q_1$ is a conformal cycle of~$G_1$, whence $G_1$ is \ce.
This completes the proof of
Lemma~\ref{prp:cycle-extendability-inherited-through-tight-cuts}.
\end{proof}

 It is worth noting that the converse of the above lemma does not hold in general. For instance, the graph shown in Figure~\ref{fig:not a cycle-extendable graph} is not \ce~since the cycle shown using a thick line is not conformal; however, its bricks and braces are \ce.  Lemma~\ref{prp:cycle-extendability-inherited-through-tight-cuts} immediately yields the following.

\begin{cor}
\label{Each-brick-brace-ce}
Each brick and brace of a \ce\ graph is also \ce.\qed
\end{cor}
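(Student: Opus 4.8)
The final statement is Corollary~\ref{Each-brick-brace-ce}, which asserts that each brick and brace of a \ce\ graph is also \ce. The plan is to derive this directly from Lemma~\ref{prp:cycle-extendability-inherited-through-tight-cuts} together with the tight cut decomposition machinery, rather than reproving anything from scratch.

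First I would recall that the bricks and braces of a \mcg\ $G$ are precisely the terminal graphs produced by the tight cut decomposition procedure: starting from $G$, one repeatedly picks a nontrivial tight cut $\partial(X)$ in some current \mcg\ and replaces it by its two $\partial(X)$-contractions, halting only when every graph in the current list is free of nontrivial tight cuts (these are, by definition, the bricks and braces). The key structural input is that each single step of this procedure replaces one \mcg\ by two $\partial(X)$-contractions, each of which is again a \mcg\ with strictly fewer vertices, so the procedure terminates and the output is a finite list of bricks and braces. By Theorem~\ref{THE UNIQUE DECOMPOSITION THEOREM} this list is independent of the choices made, so it is well-defined to speak of \emph{the} bricks and braces of $G$.

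The core of the argument is then an induction on the number of vertices of the \mcg\ under consideration, with the cycle-extendability being propagated one decomposition step at a time. Assume $G$ is \ce. If $G$ is itself free of nontrivial tight cuts then $G$ is its own unique brick or brace and there is nothing to prove. Otherwise, fix a nontrivial tight cut $\partial(X)$ of $G$; by Lemma~\ref{prp:cycle-extendability-inherited-through-tight-cuts}, both $\partial(X)$-contractions $G_1 := G/(X \to x)$ and $G_2 := G/(\overline{X} \to \overline{x})$ are \mcg s that are \ce, and each has fewer vertices than $G$. Applying the induction hypothesis separately to $G_1$ and to $G_2$, every brick and brace of $G_1$ and of $G_2$ is \ce. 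Since the bricks and braces of $G$ are exactly the union of the bricks and braces of $G_1$ and of $G_2$ (this is simply what it means to continue the decomposition procedure on the two contractions, and the list is unambiguous by Theorem~\ref{THE UNIQUE DECOMPOSITION THEOREM}), it follows that every brick and brace of $G$ is \ce, completing the induction.

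I do not expect any genuine obstacle here, since Lemma~\ref{prp:cycle-extendability-inherited-through-tight-cuts} already carries all the matching-theoretic content: cycle-extendability is inherited by a \emph{single} contraction, and the corollary only needs to iterate this down to the irreducible pieces. The one point requiring a little care — and the closest thing to a subtlety — is making sure the induction is set up so that the contractions $G_1, G_2$ themselves feed correctly into the statement being proved; this is precisely where the uniqueness guaranteed by Theorem~\ref{THE UNIQUE DECOMPOSITION THEOREM} is used, to guarantee that ``the bricks and braces of $G$'' is well-defined and equals the disjoint union of those of $G_1$ and $G_2$ regardless of which tight cut we chose first. With that observation in place the proof is a routine induction, which is exactly why the statement is phrased as a corollary and given with a \qed\ rather than a full proof in the excerpt.
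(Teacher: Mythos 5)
Your proposal is correct and matches the paper's intent exactly: the paper gives no explicit proof, stating only that Lemma~\ref{prp:cycle-extendability-inherited-through-tight-cuts} ``immediately yields'' the corollary, and the intended argument is precisely your induction along the tight cut decomposition procedure, with Theorem~\ref{THE UNIQUE DECOMPOSITION THEOREM} ensuring that ``the bricks and braces of $G$'' is well-defined.
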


Lov\'azs's theorem (\ref{THE UNIQUE DECOMPOSITION THEOREM}) leads to certain graph invariants that play a crucial role in the theory of \mcg s. In the following two sections, we discuss a couple of these invariants and related concepts that are relevant to our work.

\subsection{Near-bricks versus near-bipartite graphs}
\label{Section-3.3}

It follows from the Unique Tight Cut Decomposition Theorem (\ref{THE UNIQUE DECOMPOSITION THEOREM}) that the number of bricks of a \mcg\ $G$ (obtained by any application of the tight cut decomposition procedure) is an invariant; we denote this by $b(G)$. For instance, $b(G) = 2$ for the graph $G$ shown in Figure~\ref{fig:not a cycle-extendable graph}.

It is worth noting that $b(G)=0$ if and only if $G$ is bipartite. We say that $G$ is a {\em near-brick} if $b(G) = 1$. In particular, every brick is a near-brick. The following is an easy consequence of Theorem \ref{THE UNIQUE DECOMPOSITION THEOREM} that we will find useful in the next section.

\begin{cor}
\sloppy
\label{near-brick-characterization}
    For a near-brick, given any nontrivial tight cut $\partial(X)$, one of the \mbox{$\partial(X)$-contractions} is bipartite and the other one is a near-brick. \qed
\end{cor}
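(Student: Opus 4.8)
The plan is to invoke the Unique Tight Cut Decomposition Theorem (\ref{THE UNIQUE DECOMPOSITION THEOREM}) and reason about how the number of bricks $b$ distributes across a nontrivial tight cut. First I would recall that applying the tight cut decomposition procedure to $G$ yields a list of bricks and braces, and that $b(G)$ counts the bricks in this list, independently of which sequence of tight cuts is used. The key structural fact I would exploit is that, given a nontrivial tight cut $\partial(X)$, one may \emph{begin} the decomposition procedure by first cutting along $\partial(X)$: the two $\partial(X)$-contractions $G_1 := G/(X \rightarrow x)$ and $G_2 := G/(\overline{X} \rightarrow \overline{x})$ are themselves \mcg s, each with fewer vertices, and continuing the procedure on each of them separately produces a combined list of bricks and braces. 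By the uniqueness theorem, this combined list is (up to edge multiplicities) the same list one obtains from any other application of the procedure to $G$.

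The main quantitative step is then additivity of $b$ across the cut. Since the bricks obtained from $G$ are exactly the union of the bricks obtained from $G_1$ and those obtained from $G_2$, I would conclude that $b(G) = b(G_1) + b(G_2)$. For a near-brick we have $b(G) = 1$, and since $b$ is a nonnegative integer in each contraction, exactly one of $b(G_1), b(G_2)$ equals $1$ while the other equals $0$. Recalling the noted fact that $b(H) = 0$ if and only if $H$ is bipartite, the contraction with brick-count $0$ is precisely the bipartite one, and the contraction with brick-count $1$ is, by definition, a near-brick. This gives exactly the claimed conclusion: one $\partial(X)$-contraction is bipartite and the other is a near-brick.

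The step that requires the most care is justifying the additivity $b(G) = b(G_1) + b(G_2)$ rigorously from the uniqueness theorem. The theorem as stated guarantees that any two full applications of the procedure yield the same list, but to use it here I must argue that one \emph{legitimate} application of the procedure to $G$ is: cut along $\partial(X)$ first, then recursively decompose $G_1$ and $G_2$. This is immediate from the definition of the procedure, since $\partial(X)$ is a nontrivial tight cut of $G$ and the procedure permits choosing any nontrivial tight cut at each stage; the recursion on the two smaller \mcg s terminates in bricks and braces, whose disjoint union is a valid output list for $G$. Comparing this list with any other via Theorem~\ref{THE UNIQUE DECOMPOSITION THEOREM} then forces the brick counts to agree, and since the list for $G$ is the concatenation of the lists for $G_1$ and $G_2$, counting bricks gives additivity. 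I expect no genuine obstacle here beyond stating this bookkeeping cleanly; the result is, as the authors note, an easy consequence of the uniqueness theorem.
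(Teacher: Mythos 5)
Your proposal is correct and follows exactly the route the paper intends: the corollary is stated as an easy consequence of the Unique Tight Cut Decomposition Theorem, and your argument — start the decomposition procedure at $\partial(X)$, deduce $b(G)=b(G_1)+b(G_2)$ from uniqueness, then use $b(G)=1$ together with the fact that $b(H)=0$ if and only if $H$ is bipartite — is precisely the bookkeeping the authors leave to the reader. No gaps.
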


We now observe an easy refinement of the above corollary. Let $G$ be a near-brick that is not a brick and let $\partial(X)$ denote a nontrivial tight cut. By Corollary \ref{near-brick-characterization}, one of the $\partial(X)$-contractions, say $G_1:=G/\overline{X}$, is bipartite. Let $Y$ be a minimal (not necessarily proper) subset of $X$ such that $|Y| \geq 3$ and $D:=\partial_{G_1}(Y)$ is a tight cut of $G_1$. By choice of $Y$, the graph $G_1/\overline{Y}$ is a brace, where $\overline{Y}:= V(G_1) - Y$.  Observe that $D$ is a nontrivial tight cut of~$G$ and that one of the $D$-contractions of $G$ is isomorphic to $G_1/\overline{Y}$. This proves the following well-known fact. 

\begin{cor}
\label{lemma-about-tight-cut-in-near-brick}
    In any near-brick that is not a brick, there exists a nontrivial tight cut $\partial(X)$ such that one of the $\partial(X)$-contractions is a brace and the other one is a near-brick.\qed
\end{cor}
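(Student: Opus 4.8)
The plan is to reduce everything to Corollary~\ref{near-brick-characterization} by exhibiting a single nontrivial tight cut of $G$ one of whose contractions is already free of nontrivial tight cuts, i.e.\ is a brace. Since $G$ is a near-brick that is not a brick, it admits at least one nontrivial tight cut $\partial(X)$, and by Corollary~\ref{near-brick-characterization} I may label the shores so that $G_1 := G/\overline{X}$ is bipartite, with contraction vertex $\overline{x}$ standing for $\overline{X}$. The idea is to work entirely inside the bipartite graph $G_1$, carve out a brace there by a single tight cut, and then show that this cut lifts to a tight cut of $G$ with the required properties.

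First I would locate the brace inside $G_1$. Among all subsets $Y \subseteq X$ with $|Y| \geq 3$ for which $D := \partial_{G_1}(Y)$ is a tight cut of $G_1$, I choose one that is minimal. Such a $Y$ exists, since $Y = X$ itself qualifies: $\partial_{G_1}(X)$ is the trivial cut at $\overline{x}$, hence tight, and $|X| \geq 3$ because $\partial(X)$ is a nontrivial tight cut of $G$ (both of its shores being odd and therefore of size at least three). I would then argue from minimality that $G_1/\overline{Y}$, where $\overline{Y} := V(G_1) - Y$, is a brace: if it had a nontrivial tight cut, one shore of that cut would avoid its contraction vertex and thus lie inside $Y$, producing a tight cut of $G_1$ whose shore has size at least three but is strictly smaller than $Y$, contradicting the choice of $Y$. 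Note that $G_1/\overline{Y}$ has $|Y|+1 \geq 4$ vertices, so it is a genuine brace and not $K_2$.

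Next I would lift $D$ back to $G$. Because $Y \subseteq X$, the edges of $\partial_G(Y)$ are in bijective correspondence with the edges of $D = \partial_{G_1}(Y)$, and every perfect matching $M$ of $G$ induces a perfect matching of $G_1$ (here tightness of $\partial(X)$ is used exactly as in the proof of Lemma~\ref{prp:cycle-extendability-inherited-through-tight-cuts}); matching up the single induced edge crossing $D$ with $M \cap \partial_G(Y)$ gives $|M \cap \partial_G(Y)| = 1$. Hence $\partial_G(Y)$ is a nontrivial tight cut of $G$. Moreover, shrinking $V(G) - Y = (X - Y) \cup \overline{X}$ in $G$ shrinks exactly the vertex set that shrinking $\overline{Y}$ in $G_1$ does, with $\overline{X}$ represented by $\overline{x}$, so the two contraction operations commute and $G/\overline{Y} \cong G_1/\overline{Y}$ is the brace found above.

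Finally, Corollary~\ref{near-brick-characterization} applied to the nontrivial tight cut $\partial_G(Y)$ of the near-brick $G$ tells me that one $\partial_G(Y)$-contraction is bipartite and the other is a near-brick; since $G/\overline{Y}$ is a brace and therefore bipartite, the remaining contraction $G/Y$ is forced to be the near-brick, which is precisely what is claimed. I expect the only delicate point to be the lifting step of the third paragraph --- verifying that a tight cut of the bipartite contraction $G_1$ corresponds to a tight cut of $G$ and that the two shrinkings commute --- but this should follow routinely from the tightness of $\partial(X)$; everything else is bookkeeping plus a single appeal to Corollary~\ref{near-brick-characterization}.
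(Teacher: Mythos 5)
Your proposal is correct and follows essentially the same route as the paper: pass to the bipartite contraction $G_1:=G/\overline{X}$, choose a minimal $Y\subseteq X$ with $|Y|\geq 3$ and $\partial_{G_1}(Y)$ tight so that $G_1/\overline{Y}$ is a brace, lift this cut to a nontrivial tight cut of $G$, and finish with Corollary~\ref{near-brick-characterization}. The paper states this argument very tersely; you have merely supplied the routine verifications (existence of $Y$, the minimality argument, and the lifting/commuting of contractions), all of which are sound.
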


Recall from Section \ref{Section1.1} that a near-bipartite graph is a \mcg, say $G$, that has a removable double ear $R:=\{P_1,P_2\}$ such that $G-R:=G-P_1-P_2$ is bipartite.
The following result of~\cite{clm02b} implies that every near-bipartite graph is a near-brick.
\begin{thm}
\label{Thm:CLM-near-brick}
    Let $G$ be a \mcg, and let $R$ be a removable double ear. Then $b(G-R)=b(G)-1$.
\end{thm}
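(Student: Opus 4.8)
The plan is to reduce the statement to \citeauthor{lova87}'s dimension formula for the matching lattice. For a \mcg~$H$, let $\mathrm{Lin}(H)$ denote the linear span (over the rationals) of the incidence vectors $\chi^{M}$ of the \pema s of $H$, each regarded as a $0/1$ vector indexed by $E(H)$. \citet{lova87} proved that
\[
\dim \mathrm{Lin}(H) = |E(H)| - |V(H)| + 2 - b(H).
\]
I would apply this to both $G$ and $G-R$. Writing $R=\{P_1,P_2\}$ with $P_i$ an odd path of length $\ell_i$, deleting $P_i$ removes $\ell_i$ edges together with its $\ell_i-1$ internal vertices, so $|E|-|V|$ drops by exactly $1$ per path, hence by $2$ in total. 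Substituting the two instances of the formula, the target equality $b(G-R)=b(G)-1$ turns out to be \emph{equivalent} to the single assertion that
\[
\dim \mathrm{Lin}(G) = \dim \mathrm{Lin}(G-R) + 1,
\]
which I shall call $(\star)$. The whole proof therefore rests on $(\star)$.

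To prove $(\star)$ I would first reduce to the case in which each $P_i$ is a single edge $e_i=u_iv_i$: suppressing the degree-two internal vertices by series reductions preserves matching-coveredness and (as one checks through the tight cut decomposition) the invariant $b$, and it carries a removable double ear to a removable double ear. So assume $R=\{e_1,e_2\}$ and put $H:=G-e_1-e_2$. Since the ears now have no internal vertices, every \pema~of $G$ avoiding both $e_1$ and $e_2$ is precisely a \pema~of $H$, whence $\mathrm{Lin}(H)\subseteq\mathrm{Lin}(G)$. The lower bound in $(\star)$ is then immediate: as $G$ is \mc, the edge $e_1$ lies in some \pema~$M$, and $\chi^{M}$ has a nonzero $e_1$-coordinate, whereas every vector of $\mathrm{Lin}(H)$ has zero $e_1$-coordinate (since $e_1\notin E(H)$); hence $\chi^{M}\notin\mathrm{Lin}(H)$ and $\dim\mathrm{Lin}(G)\ge\dim\mathrm{Lin}(H)+1$.

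The crux is the reverse inequality $\dim\mathrm{Lin}(G)\le\dim\mathrm{Lin}(H)+1$, and here I would exploit the fact that the double ear is \emph{genuine}, i.e.\ that neither $e_1$ nor $e_2$ is removable on its own (this is what distinguishes a true double ear from a pair of single ears, and it is implicit in the role double ears play in ear decompositions). The structural lemma I would establish is that \emph{every \pema~of $G$ contains both $e_1$ and $e_2$, or neither of them}. Granting this, the \pema s of $G$ split into those that are \pema s of $H$ and those of the form $\{e_1,e_2\}\cup N$, where $N$ is a \pema~of $H-\{u_1,v_1,u_2,v_2\}$; the incidence vectors of the latter all carry the coordinates $e_1=e_2=1$, so modulo $\mathrm{Lin}(H)$ they form a single coset, and $(\star)$ reduces to checking that each difference $\chi^{N}-\chi^{N'}$ of two such matchings already lies in $\mathrm{Lin}(H)$. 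Verifying this containment --- equivalently, pinning the extra dimension to exactly one rather than two --- is the step I expect to be the main obstacle, and it is precisely where genuineness is indispensable: for a splittable ``double ear'' such as the two chords of a suitably labelled six-cycle, a \pema~need not use both chords, the coset argument collapses, and the dimension jumps by $2$ instead. Once the both-or-neither lemma and the containment are in hand, $(\star)$ and hence $b(G-R)=b(G)-1$ follow.
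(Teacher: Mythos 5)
First, a contextual point: the paper offers no proof of Theorem~\ref{Thm:CLM-near-brick} --- it is imported verbatim from \cite{clm02b} --- so there is no in-paper argument to compare yours against, and your proposal has to stand on its own. Measured that way, it is a correct and well-organized \emph{reduction}, but not a proof. The use of the Edmonds--Lov\'asz--Pulleyblank dimension formula is legitimate, the arithmetic showing that the theorem is equivalent to $(\star)$ is right, the series-reduction step is sound (bicontracting a degree-two internal vertex is a tight cut contraction whose other contraction is $C_4$ up to multiple edges, hence a brace, so $b$ is preserved), and the inequality $\dim\mathrm{Lin}(G)\ge\dim\mathrm{Lin}(H)+1$, equivalently $b(G-R)\ge b(G)-1$, is correctly established. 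Your both-or-neither lemma is also true and, contrary to the tone of your write-up, is essentially a one-liner: if some \pema\ $M$ of $G$ contained $e_1$ but not $e_2$, then every edge of $G-e_2$ would lie in a \pema\ of $G-e_2$ (the edges of $H$ because $H$ is \mc, and $e_1$ because of $M$), so $e_2$ would be removable, contradicting genuineness. You are also right that genuineness must be read into the paper's definition of a removable double ear; without it the statement is false, e.g.\ for two disjoint edges of $K_{3,3}$.

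The genuine gap is exactly where you flag it: the reverse inequality $\dim\mathrm{Lin}(G)\le\dim\mathrm{Lin}(H)+1$, i.e.\ the claim that $\chi^{M_1}-\chi^{M_2}\in\mathrm{Lin}(H)$ whenever $M_1$ and $M_2$ are \pema s of $G$ containing both $e_1$ and $e_2$. This cannot be deferred as a ``containment to be checked'' --- it \emph{is} the theorem. After your reductions it amounts to the following: $M_1\triangle M_2$ decomposes into alternating cycles avoiding $e_1$ and $e_2$, and for each such cycle $C$ one must show that $\chi^{M_1\cap C}-\chi^{M_2\cap C}$ lies in the span of the \pema s of $H$. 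This is immediate when $C$ is conformal in $H$, but a cycle that is conformal in $G$ only via residual matchings forced to use $e_1$ and $e_2$ need not be conformal in $H=G-e_1-e_2$, and handling those cycles is precisely the hard content that Carvalho, Lucchesi and Murty supply with substantial machinery (the tight cut description of the matching lattice). Nothing in your proposal indicates how to close that case, so as it stands the argument establishes only $b(G-R)\ge b(G)-1$ --- which is the direction this paper does not need, since Corollary~\ref{cor:near-brick-ver-near-bip} requires $b(G-R)\le b(G)-1$ in order to conclude that $G-R$ is bipartite.
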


The following result is an immediate consequence of Theorem \ref{Thm:CLM-near-brick}, and it implies that every near-brick that has a removable double ear is near-bipartite. For an example, see the graph $R_8^{-}$ shown in Figure~\ref{fig:ear-dec-petersen}.
\begin{cor}
\label{cor:near-brick-ver-near-bip}
    Let $G$ be a near-brick, and let $R$ be a removable double ear. Then $G-R$ is bipartite and \mc. \qed
\end{cor}

We remark that a near-brick need not be near-bipartite. For instance, wheels (defined in Section \ref{Section 4.2}), of order six or more, are bricks that are not near-bipartite. This is easily seen using the next proposition --- that follows from the facts: (i) the number of odd faces of a plane graph $G$, denoted by $\sf{f_{odd}}(G)$, is even, and (ii) deleting any edge may reduce $\sf{f_{odd}(G)}$ by at most two.

\begin{prop}
\label{prop:near-bipartite-necessary}
    Every near-bipartite plane graph $G$ satisfies $\sf{f_{odd}}(G) \in \{2,4\}$. \qed
\end{prop}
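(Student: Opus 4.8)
The plan is to pin down $\sf{f_{odd}}(G)$ from both sides. For the lower bound, I would use that a near-bipartite graph is nonbipartite (near-bipartite graphs form a subset of the nonbipartite \mcg s), so the plane graph $G$ has an odd cycle and therefore is not bounded entirely by even faces; hence it has at least one odd face. By fact~(i) the number of odd faces is even, so $\sf{f_{odd}}(G) \geq 2$. For the upper bound, I would exploit the defining removable double ear $R := \{P_1, P_2\}$ with $G - R$ bipartite, and track how $\sf{f_{odd}}$ changes as the two ears are stripped off one at a time to pass from $G$ down to $G - R$.

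First I would record the anchor $\sf{f_{odd}}(G - R) = 0$: the graph $G - R$ is bipartite (and \mc\ by Corollary~\ref{cor:near-brick-ver-near-bip}, hence $2$-connected), and a bipartite plane graph has no odd closed walks, so every face boundary is an even closed walk and there are no odd faces. Next I would reduce the deletion of an entire ear to the deletion of a single edge. Series-reduce each odd path $P_i$ to a single edge $e_i$, obtaining a plane graph $G'$ with $\sf{f_{odd}}(G') = \sf{f_{odd}}(G)$; each series reduction replaces a length-three subpath lying on the common boundary of two faces by one edge, lowering both of those face lengths by $2$ and hence leaving $\sf{f_{odd}}$ unchanged. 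Since $G' - e_1 - e_2 = G - R$ as graphs, fact~(ii) applied to $e_1$ and then to $e_2$ gives $\sf{f_{odd}}(G) = \sf{f_{odd}}(G') \leq \sf{f_{odd}}(G' - e_1) + 2 \leq \sf{f_{odd}}(G' - e_1 - e_2) + 4 = \sf{f_{odd}}(G - R) + 4 = 4$. Combining $\sf{f_{odd}}(G) \geq 2$ with $\sf{f_{odd}}(G) \leq 4$ and the parity constraint of fact~(i) forces $\sf{f_{odd}}(G) \in \{2, 4\}$.

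The main point to get right is the justification of fact~(ii) (equivalently, why series reduction preserves $\sf{f_{odd}}$), which hinges on the observation that an edge of a $2$-connected plane graph lies on the common boundary of exactly two \emph{distinct} faces, so that deleting it merges those two faces into one face whose length is congruent modulo $2$ to the sum of the two original lengths. This yields the trichotomy (two even faces merge to an even face; one of each merges to an odd face; two odd faces merge to an even face), from which both the ``change by $0$ or $-2$'' and ``change by $0$ or $+2$'' statements follow. The only bookkeeping I would need to confirm is that every intermediate graph remains a $2$-connected plane graph so that this merging picture is valid; this holds because $G - R = G' - e_1 - e_2$ is \mc, hence $2$-connected, and $G'$ and $G' - e_1$ are obtained from it by adding edges, which cannot create a bridge. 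I do not expect any serious difficulty beyond this planarity bookkeeping.
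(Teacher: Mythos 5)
Your proof is correct and follows essentially the same route the paper intends: it reduces the two ear deletions to two edge deletions (via parity-preserving series reductions), invokes the paper's fact that removing an edge changes $\sf{f_{odd}}$ by at most two together with $\sf{f_{odd}}(G-R)=0$, and uses nonbipartiteness plus the evenness of $\sf{f_{odd}}$ for the lower bound. The paper leaves all of this as an immediate consequence of its facts (i) and (ii); your writeup just supplies the bookkeeping, and it is sound.
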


Lastly, we use $p(G)$ to denote the number of Petersen bricks of a \mcg~$G$ (obtained by any application of the tight cut decomposition procedure), where {\em Petersen brick} refers to the Petersen graph up to multiple edges. In the next section, we apply a result of~\cite{cali08} to deduce that every \ce~graph is either bipartite or otherwise a near-brick whose unique brick is not a Petersen brick.

\subsection{Cycle-extendable\ implies bipartite or near-brick}
\label{reduce to near-brick}
 The aforementioned result of Carvalho and Little pertains to the cycle space $\mathcal{C}(G)$ and its various subspaces that are well-studied vector spaces; see \cite{bomu08}. We let  $\mathcal{C}^{e}(G)$ denote the {\em even space} of a graph $G$ --- that is, the subspace of the cycle space~$\mathcal{C}(G)$ spanned by the even cycles. Likewise, $\mathcal{A}(G)$ denotes the {\em alternating space} --- that is, the subspace of $\mathcal{C}^{e}(G)$ spanned by the conformal cycles.

For $2$-connected graphs, it is well-known that the $\mathcal{C}^e(G)$ is a proper subspace of $\mathcal{C}(G)$ if and only if $G$ is nonbipartite. In the same spirit, the following result of~\cite{cali08} characterizes the \mcg s for which the alternating space $\mathcal{A}(G)$ is a proper subspace of the even space $\mathcal{C}^e(G)$ --- in terms of the invariants $b$ and $p$. 

\begin{thm}
\label{car-lit-result}
    For a \mcg\ $G$, the following are equivalent:
    \begin{enumerate}[(i)]
    \item $\mathcal{A}(G)$ is a proper subspace of $\mathcal{C}^{e}(G)$,
    \item there exists an even cycle $C \in \mathcal{C}^{e}(G) - \mathcal{A}(G)$, and
    \item $b(G) + p(G) > 1$.
    \end{enumerate}
\end{thm}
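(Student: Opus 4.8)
The equivalence of (i) and (ii) is immediate, and I would settle it first. Every conformal cycle is even (removing its vertices leaves a matchable graph, so its length has the same parity as $|V(G)|$), whence $\mathcal{A}(G)\subseteq\mathcal{C}^{e}(G)$; and since $\mathcal{C}^{e}(G)$ is by definition spanned by the even cycles of $G$, the inclusion is proper exactly when some even cycle fails to lie in $\mathcal{A}(G)$. Thus the whole content is the equivalence of (i) with (iii), and the plan is to reduce this to a single dimension identity over $GF(2)$:
\[
\dim\mathcal{C}(G)-\dim\mathcal{A}(G)=b(G)+p(G).
\]
Granting the identity, I finish cleanly. The linear functional on $\mathcal{C}(G)$ recording the parity of a cycle has kernel $\mathcal{C}^{e}(G)$, so $\dim\mathcal{C}(G)-\dim\mathcal{C}^{e}(G)$ is $1$ when $G$ is nonbipartite and $0$ when $G$ is bipartite. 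Subtracting, the codimension of $\mathcal{A}(G)$ in $\mathcal{C}^{e}(G)$ is $b(G)+p(G)-1$ in the nonbipartite case and $0$ (as $b(G)+p(G)=0$) in the bipartite case. Hence $\mathcal{A}(G)$ is a proper subspace of $\mathcal{C}^{e}(G)$ exactly when $G$ is nonbipartite and $b(G)+p(G)\ge 2$; and since $b(G)+p(G)\ge 2$ already forces $b(G)\ge 1$ (no bricks means no Petersen bricks), hence $G$ nonbipartite, this is simply the condition $b(G)+p(G)>1$, which is (iii).

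To prove the identity I would first translate $\dim\mathcal{A}(G)$ into the language of perfect matchings. Fixing a perfect matching $M_{0}$, every conformal cycle has the form $M+M'$ ($GF(2)$-sum, i.e.\ symmetric difference, of two \pema s), so $\mathcal{A}(G)=\langle\,M_{0}+M : M\in\mathcal{M}(G)\,\rangle$, where $\mathcal{M}(G)$ is the set of perfect matchings. Consequently $\langle\mathcal{M}(G)\rangle=\langle M_{0}\rangle+\mathcal{A}(G)$, and as no perfect matching lies in the cycle space $\mathcal{C}(G)$ (its vertices all have odd degree) this sum is direct, giving $\dim_{GF(2)}\langle\mathcal{M}(G)\rangle=\dim\mathcal{A}(G)+1$. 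The identity is therefore equivalent to $\dim_{GF(2)}\langle\mathcal{M}(G)\rangle=|E(G)|-|V(G)|+2-b(G)-p(G)$, which I would prove by induction along an ear decomposition supplied by Theorem~\ref{thm1.4}. The base case is an even cycle, where $b=p=0$ and both sides equal $1$. For the step, I record the effect of each ear on the two sides: a single ear raises $\dim\mathcal{C}$ by one, leaves $b$ and $p$ fixed, and is checked to enlarge $\langle\mathcal{M}(G)\rangle$ by exactly one dimension; a double ear raises $\dim\mathcal{C}$ by two and, by Theorem~\ref{Thm:CLM-near-brick}, raises $b$ by exactly one, so the identity is preserved precisely if $\langle\mathcal{M}(G)\rangle$ grows by one dimension when the brick newly created in the decomposition is not a Petersen brick, and by zero dimensions when it is.

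This last dichotomy is the genuinely hard part and the step I expect to be the main obstacle: proving that a double-ear addition fails to enlarge the matching span exactly when it creates a Petersen brick. The Petersen graph is the obstruction because each of its edges lies in precisely two perfect matchings, so the $GF(2)$-sum of all six of its perfect matchings is $0$; this single extra relation lowers $\dim\langle\mathcal{M}\rangle$ by one, i.e.\ makes $\dim\mathcal{C}-\dim\mathcal{A}$ equal $2$ rather than $1$. I would isolate the heart as a brick lemma --- for a brick $B$, one has $\mathcal{A}(B)=\mathcal{C}^{e}(B)$ if and only if $B$ is not a Petersen brick --- and prove it through the fine ear-structure of bricks, using the brick generation theorem of~\cite{noth07} together with removable-ear theory: every non-Petersen brick carries a removable (single or double) ear along which the matching span provably grows, whereas the Petersen graph carries none that helps. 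An alternative organisation, which I would keep in reserve, routes the whole argument through the Unique Tight Cut Decomposition Theorem~(\ref{THE UNIQUE DECOMPOSITION THEOREM}): after a short splitting lemma showing how $\mathcal{C}(G)$ and $\langle\mathcal{M}(G)\rangle$ decompose across a tight cut, both $b+p$ and $\dim\mathcal{C}-\dim\mathcal{A}$ become additive over the list of bricks and braces, reducing the theorem to braces (where $\mathcal{A}=\mathcal{C}^{e}=\mathcal{C}$ and $b+p=0$) and to the very same brick lemma.
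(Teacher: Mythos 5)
The paper does not prove Theorem~\ref{car-lit-result} at all: it is quoted as a result of \cite{cali08}, so there is no in-paper argument to compare yours against. Judged on its own terms, your reduction is sound and matches how the result is actually obtained in the literature. The equivalence of (i) and (ii) is indeed immediate; the identification $\mathcal{A}(G)=\langle M_0+M : M\in\mathcal{M}(G)\rangle$, the directness of $\langle\mathcal{M}(G)\rangle=\langle M_0\rangle\oplus\mathcal{A}(G)$, and the bookkeeping with the parity functional are all correct, and the final deduction of (i)$\Leftrightarrow$(iii) from $\dim\mathcal{C}(G)-\dim\mathcal{A}(G)=b(G)+p(G)$ is clean (including the observation that $p\le b$ forces nonbipartiteness when $b+p\ge 2$); spot checks on $K_4$ and the Petersen graph confirm the identity.

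The gap is that the identity itself is the theorem. What you have written reduces Carvalho--Little to Lov\'asz's formula $\dim_{GF(2)}\langle\mathcal{M}(G)\rangle=|E|-|V|+2-b(G)-p(G)$, equivalently to the brick lemma that $\mathcal{A}(B)=\mathcal{C}^{e}(B)$ for every brick $B$ other than the Petersen graph, and that lemma carries essentially all of the depth of the statement; your proposal only plans its proof. Even the routine-looking steps of the planned induction need results not available in the paper: you must show a single-ear addition enlarges $\langle\mathcal{M}\rangle$ by exactly one dimension and leaves $b+p$ unchanged (the paper states the Carvalho--Lucchesi--Murty invariance only for removable \emph{double} ears, Theorem~\ref{Thm:CLM-near-brick}), and the crux --- that a double-ear step loses a dimension of matching span precisely when the new brick is a Petersen brick --- is a hard theorem, not a verification. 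The Norine--Thomas route you sketch for the brick lemma is plausible but would itself be a substantial piece of work (one must track how the matching span behaves under each of the four expansion operations). So the architecture is right and nothing you assert is false, but the proposal should be read as a correct reduction to a deep known theorem rather than as a proof.
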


Observe that, for a \mcg\ $G$, the inequality $b(G)+p(G) > 1$ holds if and only if $G$ is nonbipartite, and either (i) $G$ is not a near-brick or otherwise (ii) $G$ is a near-brick whose unique brick is the Petersen brick. The following is an immediate consequence of the above theorem that we alluded to earlier.

\begin{cor}{\sc[Cycle-Extendable implies Bipartite or Near-Brick]}\newline
\label{near-brick-cor}
Every \ce\ graph is either bipartite or otherwise a near-brick whose unique brick is not the Petersen brick.
\end{cor}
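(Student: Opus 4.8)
The plan is to derive Corollary~\ref{near-brick-cor} directly from Theorem~\ref{car-lit-result} together with the observation recorded immediately after it. First I would unpack what it means for a \ce\ graph to satisfy the negation of condition~(ii) in Theorem~\ref{car-lit-result}. If $G$ is \ce, then by definition every even cycle is conformal, so every even cycle lies in the alternating space $\mathcal{A}(G)$. Since $\mathcal{C}^e(G)$ is \emph{spanned} by the even cycles and each such cycle already belongs to $\mathcal{A}(G)$, we get $\mathcal{C}^e(G) \subseteq \mathcal{A}(G)$; the reverse containment $\mathcal{A}(G) \subseteq \mathcal{C}^e(G)$ holds by definition of the alternating space. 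Hence $\mathcal{A}(G) = \mathcal{C}^e(G)$ for every \ce\ graph $G$, so condition~(i) of Theorem~\ref{car-lit-result} \emph{fails}.

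Next I would invoke the equivalence (i)$\Leftrightarrow$(iii) of Theorem~\ref{car-lit-result}: since (i) fails, (iii) must fail as well, i.e.\ every \ce\ graph $G$ satisfies $b(G) + p(G) \le 1$. The remaining work is purely to translate this numerical inequality into the structural conclusion, using the dictionary established in the paragraph following Theorem~\ref{car-lit-result}. I would argue by cases on whether $G$ is bipartite. If $G$ is bipartite we are already in the first alternative of the corollary, so assume $G$ is nonbipartite; then $b(G) \ge 1$, and combined with $b(G) + p(G) \le 1$ and $p(G) \ge 0$ this forces $b(G) = 1$ and $p(G) = 0$. The equality $b(G)=1$ says precisely that $G$ is a near-brick, and $p(G)=0$ says that its (unique) brick is not a Petersen brick. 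This is exactly the second alternative of the corollary.

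I expect there to be essentially no hard obstacle here, since the statement is flagged in the text as "an immediate consequence" of Theorem~\ref{car-lit-result}; the only point requiring a little care is the first step, where one must correctly argue that cycle-extendability yields $\mathcal{A}(G)=\mathcal{C}^e(G)$ rather than merely that each even cycle is conformal. The subtlety is that $\mathcal{A}(G)$ is defined as a span, so one must note that containment of a spanning set of $\mathcal{C}^e(G)$ inside $\mathcal{A}(G)$ already forces containment of the whole subspace; this is where the hypothesis "for \emph{each} even cycle $C$, the cycle $C$ is conformal" is used in full strength. After that, the case analysis on $b(G)+p(G)\le 1$ is routine, relying only on the nonnegativity and integrality of the invariants $b$ and $p$ together with the already-stated characterization of when $b(G)+p(G)>1$ holds.
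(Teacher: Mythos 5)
Your proposal is correct and takes essentially the same approach as the paper: both arguments are immediate applications of Theorem~\ref{car-lit-result} together with the observation that $b(G)+p(G)\le 1$ translates into ``bipartite, or a near-brick whose unique brick is not the Petersen brick.'' The only (cosmetic) difference is that the paper argues the contrapositive via the implication (iii)$\Rightarrow$(ii), producing a non-conformal even cycle, whereas you argue directly via (i)$\Leftrightarrow$(iii) after noting that cycle-extendability forces $\mathcal{A}(G)=\mathcal{C}^e(G)$.
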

\begin{proof}
    We shall prove the contrapositive. Let $G$ be a \mcg\ such that $b(G) + p(G) > 1$. By Theorem \ref{car-lit-result}, there exists an even cycle $C \in \mathcal{C}^{e}(G) - \mathcal{A}(G)$. Clearly, $C$ is not a conformal cycle; whence $G$ is not \ce.
\end{proof}

We now deduce another consequence of the above corollary and some of the earlier results; in the case of bipartite graphs, we shall find a result of Chartrand, Kaugars and Lick useful. 

\begin{cor}{\sc[Minimum Degree Three or more implies Brick]}\newline
\label{DegreeThree-Graph}
    Every simple planar \ce\ graph, with minimum degree three or more, is a brick.
\end{cor}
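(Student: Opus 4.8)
The plan is to prove that such a $G$ can have no nontrivial tight cut and cannot be bipartite; together with Corollary~\ref{near-brick-cor} this forces $G$ to be a brick. Throughout, $G$ is simple, planar, \ce, with $\delta(G)\geq 3$ (in particular $G\neq K_2$, so $G$ is $2$-connected). I would split into the nonbipartite and the bipartite cases, and in each case exhibit a vertex of degree two unless the desired conclusion already holds.

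The nonbipartite case is the heart of the argument. By Corollary~\ref{near-brick-cor} a nonbipartite \ce\ graph is a near-brick, so it suffices to rule out nontrivial tight cuts. Suppose $G$ has one; being a near-brick that is not a brick, Corollary~\ref{lemma-about-tight-cut-in-near-brick} supplies a nontrivial tight cut $\partial(X)$ one of whose contractions, say $B:=G/\overline{X}$, is a brace. Since $B$ is a brace of the \ce\ graph $G$, it is \ce\ by Corollary~\ref{Each-brick-brace-ce}, and it is planar (being a contraction of the planar graph $G$ onto a connected shore). Passing to its underlying simple graph, which is again a simple planar \ce\ brace since parallel reduction preserves \ce, Corollary~\ref{brace-characterize} forces $B$ to be $K_2$ or $C_4$. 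As $\partial(X)$ is nontrivial, $B=K_2$ is impossible (that would force $|X|=1$), so $B=C_4$ and $|X|=3$. The contraction vertex $\overline{x}$ has exactly two neighbours in $C_4$, so the unique vertex $b\in X$ nonadjacent to $\overline{x}$ has both of its neighbours inside $X$; since $G$ is simple this yields $\deg_G(b)=2$, contradicting $\delta(G)\geq 3$. Hence $G$ has no nontrivial tight cut, and a near-brick free of nontrivial tight cuts is a brick.

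It remains to rule out the bipartite case, since bricks are nonbipartite; concretely, I want to show that any bipartite, simple, planar, \ce\ graph has a vertex of degree at most two. The mechanism of the previous paragraph applies verbatim: either $G$ is itself a brace, so $G\in\{K_2,C_4\}$ by Corollary~\ref{brace-characterize} and $\delta(G)\leq 2$, or $G$ has a nontrivial tight cut, and the minimal-shore construction in the paragraph preceding Corollary~\ref{lemma-about-tight-cut-in-near-brick} again produces a planar \ce\ brace $C_4$ and thereby a degree-two vertex of $G$. Because Corollary~\ref{lemma-about-tight-cut-in-near-brick} itself is stated only for near-bricks, to keep the bipartite case self-contained I would instead invoke the theorem of Chartrand, Kaugars and Lick---which guarantees, in any $2$-connected graph of minimum degree at least three, a vertex whose deletion preserves $2$-connectivity---and use it to drive a reduction down to the base braces $K_2$ and $C_4$. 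In either formulation $\delta(G)\leq 2$, so $G$ is not bipartite; combined with the nonbipartite analysis, $G$ is a brick.

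The step I expect to cause the most trouble is the bipartite case, together with the two supporting facts it relies on. First, the argument needs that contracting a shore of a tight cut preserves planarity, so that Corollary~\ref{brace-characterize} may be applied to the resulting brace; this amounts to knowing that the shores can be taken connected, equivalently that the bricks and braces of a planar \mcg\ are planar, and must be justified rather than assumed. Second, the Chartrand--Kaugars--Lick reduction must be set up so that the minimum-degree hypothesis remains usable after deleting or contracting, which is the delicate point. By contrast the nonbipartite half is essentially forced: Corollaries~\ref{near-brick-cor}, \ref{lemma-about-tight-cut-in-near-brick} and~\ref{brace-characterize} pin the offending brace to $C_4$ and immediately expose a vertex of degree two.
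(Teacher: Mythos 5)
Your nonbipartite case coincides with the paper's proof: Corollary~\ref{near-brick-cor} gives a near-brick, Corollary~\ref{lemma-about-tight-cut-in-near-brick} supplies a nontrivial tight cut $\partial(X)$ with a brace contraction, Corollaries~\ref{Each-brick-brace-ce} and~\ref{brace-characterize} pin that brace to $C_4$ (up to multiple edges, which all sit at the contraction vertex since $G$ is simple), and the unique vertex of the $C_4$ meeting no edge of $\partial(X)$ has degree two in $G$ --- exactly the paper's contradiction. Where you genuinely diverge is the bipartite case. The paper does not touch tight cuts there: it applies Chartrand--Kaugars--Lick once to obtain a vertex $v$ with $G-v$ $2$-connected, invokes Whitney's theorem to get the facial cycle $C$ bounding the face of $G-v$ that contains $v$, notes by planarity that all neighbours of $v$ lie on $C$, and concludes that the even cycle $C$ is $v$-isolating and hence non-conformal, contradicting cycle-extendability. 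Your first bipartite route --- either $G$ is itself a brace, forced to $K_2$ or $C_4$ by Corollary~\ref{brace-characterize}, or the minimal-shore construction yields a nontrivial tight cut whose brace contraction is $C_4$ and hence a degree-two vertex --- is valid and has the merit of making the two cases uniform; the minimal-shore argument preceding Corollary~\ref{lemma-about-tight-cut-in-near-brick} applies verbatim when the ambient graph is bipartite, and the supporting facts you flag (shores of tight cuts induce connected subgraphs, so contractions remain planar; the underlying simple graph of a brace is a brace and parallel reduction preserves cycle-extendability) are standard and are used implicitly by the paper as well. Your fallback, however --- ``instead invoke Chartrand--Kaugars--Lick \ldots to drive a reduction down to the base braces'' --- is not a workable plan as stated: that theorem produces a nondisconnecting vertex, not a tight-cut reduction, and the iterative scheme it suggests would indeed founder on the degree-preservation issue you yourself anticipate. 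Commit to your tight-cut version (or adopt the paper's one-shot facial-cycle argument); either one closes the bipartite case.
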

\begin{proof}
Let $G$ be a simple planar \ce~graph with $\delta(G) \geq 3$. First, suppose that $G$ is bipartite. Consider a planar embedding of $G$. Since $G$ is 2-connected and $\delta(G) \geq 3$, it follows from a result of~\cite{chartKauLick} that there exists a vertex $v$ so that $G-v$ is a 2-connected (plane) graph. We invoke Whitney's well-known result \cite{bomu08}, and we let $C$ denote the facial (even) cycle that bounds the face containing the point corresponding to the deleted vertex $v$. By planarity, all neighbors of $v$ lie on $C$. Thus, $C$ is not conformal and $G$ is not \ce; contradiction.

Hence, $G$ is nonbipartite. By Corollary \ref{near-brick-cor}, $G$ is a near-brick. Suppose to the contrary that $G$ is not a brick. Using Lemma \ref{lemma-about-tight-cut-in-near-brick}, there exists a nontrivial tight cut $\partial(X)$ so that one of the $\partial(X)$-contractions, say $G_1:=G/\overline{X}$, is a brace. We invoke Corollaries \ref{Each-brick-brace-ce} and \ref{brace-characterize} to infer that $G_1$ is isomorphic to $C_4$ (up to multiple edges). Since $G$ is simple, the multiple edges of $G_1$ (if any) are incident with the contraction vertex. Let $v$ denote the unique vertex of $G_1$ that is not incident with any edge in $\partial(X)$; see Figure~\ref{fig:Illustration of Theorem degree 3}. Observe that $d_{G_1}(v)=2$ and that $d_{G}(v)=2$. This contradicts our hypothesis that $\delta(G) \geq 3$.
\end{proof}

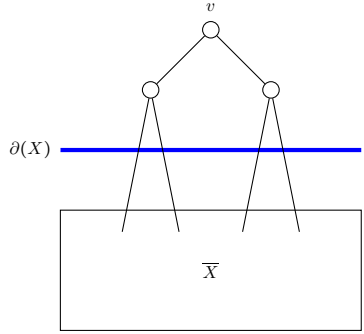
\begin{figure}
    \centering
    \begin{tikzpicture}[every node/.style={draw=black, circle, scale=0.66,fill=white}, scale=0.4,rotate=90]
        \node[label=above:{$v$}] (a1) at (0,0){};
        \node (a2) at (-2,-2){};
        \node (a3) at (-2,2){};
        \draw (a3) -- (a1) -- (a2);
        \draw[ultra thick,blue] (-4,6) -- (-4,-5);
        \node[draw=none] (a4) at (-4,6){$\partial(X)$};
         \draw (-6,5) rectangle (-10,-5);
         \node[draw=none] (a10) at (-8,0){$\overline{X}$};
        \node[draw=none] (a5) at (-7,3){};
        \node[draw=none] (a6) at (-7,1){};
        \node[draw=none] (a7) at (-7,-3){};
        \node[draw=none] (a8) at (-7,-1){};
        \draw (a5) -- (a3) -- (a6);
        \draw (a7) -- (a2) -- (a8);
    \end{tikzpicture}
    \caption{illustration for the proof of Theorem~\ref{DegreeThree-Graph}}
    \label{fig:Illustration of Theorem degree 3}
\end{figure}

Inspired by the proof of the above corollary in the bipartite case, we introduce the following definition that we shall find useful later. For a vertex $v$ of a graph $G$, a cycle $C$ (in $G-v$) is said to be {\em $v$-isolating} if $v$ is an isolated vertex in the graph $G-V(C)$. 

In order to characterize planar \ce~graphs, it follows from Proposition~\ref{prop:irreducible-reduction} and Corollary~\ref{DegreeThree-Graph} that it suffices to characterize (i) planar \ce\ bricks, and (ii)~planar \ce~irreducible graphs that have a vertex of degree two. In the case of planar bricks, we shall solve a more general problem that we discuss in the next section.

\section{\texorpdfstring{$K_{2,3}$}{}-freeness}
\label{sec:k23-free}
To {\em bisubdivide} an edge means to subdivide it by inserting an even number of subdivision vertices.
A graph $H$ is a {\em bisubdivision} of a graph $G$ if $H$ may be obtained from $G$ by selecting any (possibly empty) subset $F \subseteq E(G)$ and bisubdividing each edge in $F$, or equivalently, if $G$ may be obtained from $H$ by a sequence of series reductions.


Given a fixed graph $J$, a graph $G$ is said to be {\em $J$-free} if $G$ does not contain any subgraph~$H$ that is a bisubdivision of $J$; otherwise, we say that $G$ is {\em $J$-based}. The bicorn $R_8$, shown in Figure~\ref{fig:$K_{2,3}$-based graph}, is $K_{2,3}$-based; the thick lines indicate a bisubdivision of $K_{2,3}$. The pentagonal prism minus a specific edge, shown in Figure~\ref{fig:$K_{2,3}$-free graph but not ce}, is $K_{2,3}$-free; the reader may verify this using Lemma~\ref{lem:planar-ce-implies-K23free} and Proposition~\ref{prop:prism ce}.

\begin{figure}[!htb]
    \centering
    \begin{subfigure}{0.5\textwidth}
            \centering
              \begin{tikzpicture}[every node/.style={draw=black, circle,scale=0.66}, scale=0.5]
                   \node (a1) at (2,2){};
                   \node[fill=black] (a2) at (3.5,2){};
                   \node (a3) at (5,2){};
                   \node (a7) at (7,0){};
                   \node (a8) at (0,0){};
                   \node[fill=black] (a4) at (0,4){};
                   \node (a5) at (3.5,4){};
                   \node (a6) at (7,4){};
                   \draw[ultra thick] (a1) -- (a2);
                   \draw[ultra thick] (a2) -- (a3);
                   \draw (a3) -- (a6) -- (a5);
                   \draw[ultra thick] (a5)-- (a4) -- (a1); 
                   \draw[ultra thick] (a4) -- (a8) -- (a7) -- (a3);
                   \draw (a8) -- (a1);
                   \draw (a6) -- (a7);
                   \draw[ultra thick] (a2) -- (a5);
            \end{tikzpicture}
         \caption{}
        \label{fig:$K_{2,3}$-based graph}
    \end{subfigure}%
    \hfill
       \begin{subfigure}{0.5\textwidth}
            \centering
             \begin{tikzpicture}[every node/.style={draw=black, circle, scale=0.66}, scale=0.5]
                \node[draw=none] (a0) at (0:0) {};
                \node (a1) at (30:3) {};
                \node (a2) at (30:5) {};
                \node (a3) at (90:3) {};
                \node (a4) at (90:5) {};
                \node (a5) at (150:3) {};
                \node (a6) at (150:5) {};
                \node (a7) at (215:3) {};
                \node (a8) at (220:5) {};
                \node (a9) at (325:3) {};
                \node (a10) at (320:5) {};

                \draw (a1) -- (a3);
                \draw (a3) -- (a5);
                \draw[ultra thick] (a5) -- (a7) -- (a9) -- (a1);
                \draw (a2) -- (a4) -- (a6);
                \draw[ultra thick] (a6) -- (a8) -- (a10) -- (a2);
                \draw[ultra thick] (a1) -- (a2);
                \draw[ultra thick] (a5) -- (a6);
                \draw (a7) -- (a8);
                \draw (a9) -- (a10);

         \end{tikzpicture}
         \caption{}
        \label{fig:$K_{2,3}$-free graph but not ce}
    \end{subfigure}%
     \caption{(a) the bicorn $R_8$; (b) the pentagonal prism minus one edge}
 \end{figure}
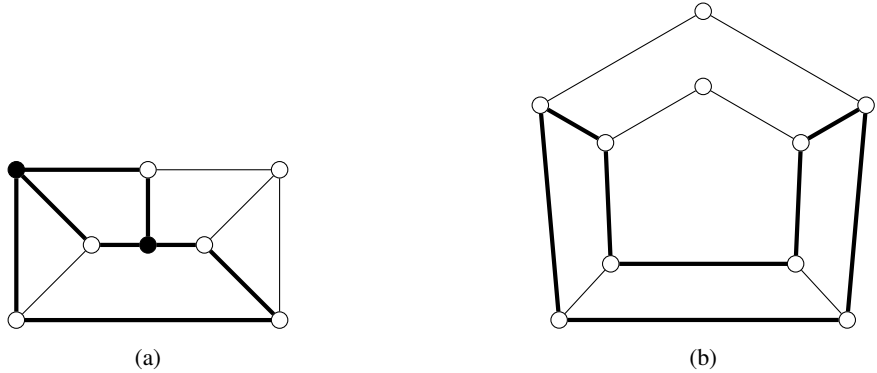
\subsection{Planarity: cycle-extendable implies \texorpdfstring{$K_{2,3}$}{}-free}

                 


We now discuss a necessary condition for a planar graph to be \ce.

\begin{lem}{\sc [Cycle-Extendability implies $K_{2,3}$-freeness in Planar Graphs]}\newline
\label{lem:planar-ce-implies-K23free}
If a planar \mcg\ is \ce\ then it is $K_{2,3}$-free.
\end{lem}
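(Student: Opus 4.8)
The plan is to prove the contrapositive: assuming $G$ is a planar \mcg\ that is $K_{2,3}$-based, I will exhibit an even cycle that is not conformal, so that $G$ fails to be \ce. Fix a subgraph $H \subseteq G$ that is a bisubdivision of $K_{2,3}$. Since the two degree-three vertices of $K_{2,3}$ are its only branch vertices, $H$ consists of two vertices $a,b$ joined by three internally disjoint paths $P_1,P_2,P_3$; as each $a$--$b$ path of $K_{2,3}$ has length two and bisubdivision (inserting an even number of vertices per edge) preserves the parity of path lengths, each $P_i$ has even length $\ell_i$, and in particular an odd number $\ell_i-1$ of internal vertices. Consequently the three cycles $C_{12}:=P_1\cup P_2$, $C_{13}:=P_1\cup P_3$ and $C_{23}:=P_2\cup P_3$ are all even. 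If $G$ were \ce, then all three of these even cycles would have to be conformal; I will show this is impossible.

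The key step is a parity observation for conformal cycles in plane graphs. Fix a planar embedding of $G$. I claim that if an even cycle $C$ is conformal, then the number of vertices of $G$ lying strictly on each side of $C$ is even. Indeed, let $M$ be a \pema\ of $G-V(C)$. Viewing $C$ as a Jordan curve, no edge of $G$ can join a vertex strictly inside $C$ to one strictly outside, since such an edge would have to cross $C$ in the embedding; hence every edge of $M$ joins two interior or two exterior vertices, so $M$ restricts to a \pema\ of the interior vertices, forcing their number (and, since $|V(G)|$ and $|V(C)|$ are even, the exterior number) to be even.

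Finally I combine these facts. In the embedding, the theta graph $H$ has exactly three faces, bounded respectively by $C_{12}$, $C_{23}$ and $C_{13}$; relabelling if necessary, take $P_2$ to be the middle path, so that $C_{13}$ separates the face it bounds alone from the region containing the internal vertices of $P_2$ together with the two faces bounded by $C_{12}$ and $C_{23}$. Let $n_{12}$ and $n_{23}$ denote the numbers of vertices of $G$ lying strictly inside these latter two faces. Conformality of $C_{12}$ and of $C_{23}$ forces $n_{12}$ and $n_{23}$ to be even. On the other hand, the vertices of $G$ on the $P_2$-side of $C_{13}$ number exactly $n_{12}+n_{23}+(\ell_2-1)$, and conformality of $C_{13}$ would force this to be even as well; but $\ell_2-1$ is odd, a contradiction. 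Hence the three even cycles cannot all be conformal, so $G$ is not \ce, completing the contrapositive. The main obstacle is the bookkeeping in this last step --- correctly accounting for which vertices of $G$ lie on each side of $C_{13}$ (including the internal vertices of the middle path) and applying the Jordan-curve parity lemma to the right cycle; the even parity of the path lengths, hence the odd internal-vertex counts, is precisely what drives the contradiction.
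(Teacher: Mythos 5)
Your proof is correct and follows essentially the same route as the paper's: both argue the contrapositive by taking the theta subgraph (two branch vertices joined by three even paths), applying the Jordan-curve parity observation that a conformal even cycle has an even number of vertices strictly on each side, and deriving a contradiction from the odd number of internal vertices of the middle path. The only difference is that you spell out the parity lemma explicitly, whereas the paper leaves it as an observation.
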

\begin{proof}
We prove the contrapositive.
Let $G$ denote any planar embedding of a planar \mcg\ that is $K_{2,3}$-based, 
and let $H$ denote a subgraph that is a bisubdivision\ of~$K_{2,3}$.
Observe that $H$ comprises two cubic vertices, say~$u$~and~$v$,
and three internally-disjoint even $uv$-paths, say $P_1,P_2$ and $P_3$. See Figure~\ref{fig:bisubdivision-of-K_{2,3}}.
Any two of these paths, say~$P_i$~and~$P_j$ (where $i <j$), comprise a facial cycle of~$H$,
say~$Q_{ij}$. We will argue that at least one of these three cycles is not conformal.

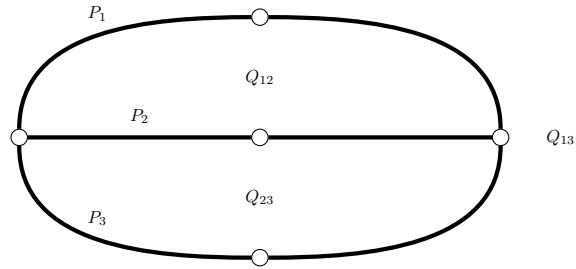
\begin{figure}[!htb]
    \centering
    \begin{tikzpicture}[every node/.style={draw=black, circle,scale=0.66}, scale=0.2]
        \node (a1) at (0,0){};
        \node (a2) at (16,8){};
        \node (a3) at (16,0){};
        \node (a4) at (16,-8){};
        \node (a5) at (32,0){};
        \node[draw=none] (a6) at (16,4){$Q_{12}$};
        \node[draw=none] (a6) at (16,-4){$Q_{23}$};    '
        \node[draw=none] (a6) at (36,0){$Q_{13}$};    '
        \draw[ultra thick] (a1) --node[above,draw=none]{$P_2$} (a3) -- (a5);
        \draw[ultra thick,in=180,out=90] (a1) to node[above,draw=none]{$P_1$} (a2);
        \draw[ultra thick] (a2) to[in=90,out=0] (a5);
        \draw[ultra thick] (a1) to[in=180,out=270]node[above,draw=none]{$P_3$} (a4);
        \draw[ultra thick] (a4) to[in=270,out=0] (a5);
    \end{tikzpicture}
    \caption{an illustration for the proof of Lemma~\ref{lem:planar-ce-implies-K23free}}
    \label{fig:bisubdivision-of-K_{2,3}}
\end{figure}

As per the Jordan Curve Theorem, each cycle $Q_{ij}$ partitions the rest of the plane into two regions:
(i) the interior denoted by ${\sf int}(Q_{ij})$ and (ii) the exterior denoted by ${\sf ext}(Q_{ij})$.
We adjust notation so that ${\sf int}(Q_{ij})$ refers to the region that does not meet the path $P_k$
(where $k$ is distinct from $i$ and $j$) as shown in Figure~\ref{fig:bisubdivision-of-K_{2,3}}.

Now, assume without loss of generality that each of $Q_{12}$ and $Q_{23}$ is conformal.
Since $Q_{12}$ is conformal, ${\sf int}(Q_{12})$ contains an even number of vertices.
Likewise, ${\sf int}(Q_{23})$ contains an even number of vertices.
Observe that ${\sf ext}(Q_{13})$ contains precisely all of the vertices that lie in
${\sf int}(Q_{12}) \cup {\sf int}(Q_{23})$, plus the internal vertices of the path $P_2$.
In particular, ${\sf ext}(Q_{13})$ contains an odd number of vertices; ergo, $Q_{13}$ is not conformal.
Thus $G$ is not \ce. This completes the proof of Lemma \ref{lem:planar-ce-implies-K23free}.
\end{proof}

The above lemma was proved by the second author and appeares in his MSc thesis \cite{paus22}. However, we learnt later this was already observed by~\cite{Zhli12}.

It is worth noting that the converse of the above lemma does not hold in general. Figure~\ref{fig:$K_{2,3}$-free graph but not ce} shows an example of a (nonbipartite) planar \mcg\ that is $K_{2,3}$-free but not \ce; the even cycle depicted by a thick line is not conformal. However, the converse indeed holds in the case of bipartite graphs; this was proved by~\cite{Zhli12}, and independently by the second author \cite{paus22}. This inspires the following decision problem.

\begin{prb}
    Given a \mcg~$G$, decide whether $G$ is $K_{2,3}$-free.
\end{prb}

Thus, in the context of planar nonbipartite \mcg s, \ce\ graphs comprise a proper subset of $K_{2,3}$-free graphs. We are currently working on characterizing $K_{2,3}$-free \mcg s, and have managed to solve the problem in the case of $3$-connected graphs.

In light of Lemma~\ref{lem:planar-ce-implies-K23free}, in order to conclude that a planar graph is not \ce,~it suffices to show the existence of a bisubdivision of $K_{2,3}$. In the following section, we discuss a sufficient condition for the existence of a bisubdivision of $K_{2,3}$ that is applicable to $3$-connected nonbipartite graphs. Since bricks are $3$-connected (see Proposition~\ref{prp:brick-brace-3-connected}) and nonbipartite, this will help us in characterizing planar \ce~bricks. In fact, we will obtain a characterization of planar $K_{2,3}$-free bricks.

\subsection{\texorpdfstring{$3$}{}-connectedness: mixed bicycle implies \texorpdfstring{$K_{2,3}$}{}-based}
A pair of vertex-disjoint simple cycles, say~$(Q_o,Q_e)$, is said to be a {\em mixed bicycle} if $Q_o$ is odd and $Q_e$ is even. (Here, simple just means that the length of $Q_e$ is four or more.) The following is our promised sufficient condition for $3$-connected (nonbipartite) graphs.

\begin{lem}{\sc [Mixed Bicycle implies $K_{2,3}$-based in $3$-connected Graphs]}\newline
\label{lem:3-conn-mixed-bicycle-implies-K23-bisub}
If a $3$-connected graph has a mixed bicycle then it is $K_{2,3}$-based.
\end{lem}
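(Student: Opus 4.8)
The plan is to reformulate the conclusion in a convenient way. A bisubdivision of $K_{2,3}$ is precisely a \emph{theta subgraph} --- two branch vertices joined by three internally disjoint paths --- in which all three paths are even; indeed, $K_{2,3}$ is itself the theta graph whose three paths have length two, and replacing each edge by an odd path (inserting an even number of subdivision vertices) turns each of the three branch-vertex-to-branch-vertex paths into an even path. Conversely, every even path has length at least two, so it can be split at an interior vertex into two odd subpaths, recovering the three branch vertices of $K_{2,3}$. Thus it suffices to exhibit two vertices of $G$ joined by three internally disjoint \emph{even} paths; I will call such a configuration an even theta.

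Next I would extract disjoint paths joining the two cycles. Write $Q_o$ and $Q_e$ for the odd and even cycles of the mixed bicycle, so that $V(Q_o)$ and $V(Q_e)$ are disjoint and each has size at least three. Since $G$ is $3$-connected, deleting any two vertices leaves a connected graph, so no two vertices separate $V(Q_o)$ from $V(Q_e)$; by Menger's theorem there are three pairwise vertex-disjoint $Q_o$--$Q_e$ paths. Trimming each to the portion between its last vertex on $Q_o$ and its first vertex on $Q_e$, I obtain three vertex-disjoint paths $R_1,R_2,R_3$, each internally disjoint from $Q_o\cup Q_e$, with distinct ends $a_1,a_2,a_3$ on $Q_o$ and distinct ends $b_1,b_2,b_3$ on $Q_e$.

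The heart of the argument is a parity computation that exploits the two cycles in opposite ways. Since $Q_e$ is even it is bipartite, so its vertices are properly $2$-colored and two of them lie at even distance precisely when they share a color; by pigeonhole two of $b_1,b_2,b_3$ --- say $b_1$ and $b_2$ --- share a color, so both arcs of $Q_e$ between $b_1$ and $b_2$ are even. Now form the ear $P$ joining $b_1$ to $b_2$ by following $R_1$ from $b_1$ to $a_1$, then an arc of $Q_o$ from $a_1$ to $a_2$, then $R_2$ from $a_2$ to $b_2$; this $P$ is internally disjoint from $Q_e$, because $R_1,R_2$ meet $Q_e$ only at their ends and $Q_o$ is disjoint from $Q_e$. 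The two arcs of $Q_o$ between $a_1$ and $a_2$ have lengths summing to the odd number $|Q_o|$, hence have opposite parities, so I may choose the arc that makes $|P|$ even. Then $Q_e\cup P$ is a theta subgraph whose three paths --- the two even arcs of $Q_e$ and the even ear $P$ --- are all even, i.e.\ an even theta, and hence a bisubdivision of $K_{2,3}$.

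I expect the main obstacle to be the bookkeeping that keeps $P$ internally disjoint from $Q_e$ (and from the two $Q_e$-arcs) while simultaneously controlling its parity; this is exactly where the asymmetry between the two cycles is essential, since the even cycle supplies, via pigeonhole, two attachment points at even distance, whereas the odd cycle supplies the parity switch that corrects the ear. The only place $3$-connectivity is used is to guarantee three disjoint cycle-to-cycle paths, which in turn furnishes the third landing point needed for the pigeonhole step; two paths alone would not force an even-distance pair, and an ear attached to the odd cycle could never yield two even arcs.
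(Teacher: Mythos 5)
Your proposal is correct and follows essentially the same route as the paper's proof: Menger's theorem supplies three disjoint connecting paths, the parity of the even cycle forces two attachment points splitting it into two even arcs (the paper phrases this as ``at least one of the three arcs is even,'' which is equivalent to your pigeonhole on the $2$-colouring), and the odd cycle supplies the parity switch for the third, connecting path. Your explicit reformulation of a bisubdivision of $K_{2,3}$ as an even theta, and your care about internal (rather than merely edge) disjointness, are worthwhile clarifications but do not change the argument.
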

\begin{proof}
Let $(Q_o, Q_e)$ denote a mixed bicycle in a $3$-connected graph~$G$. By Menger's Theorem,
there exist three vertex-disjoint paths --- say $P_i$ (where $i \in \{1,2,3\}$) ---
each of which has one end, say $u_i$, in $Q_o$ and the other end, say $v_i$, in $Q_e$.
Let $Q_1, Q_2$ and $Q_3$ denote the three edge-disjoint paths of $Q_e$ that have both ends in $\{v_1,v_2,v_3\}$.
In particular, $Q_e = Q_1 \cup Q_2 \cup Q_3$.

Since $Q_e$ is even, at least one of $Q_1, Q_2$ and $Q_3$ is even. Adjust notation so that $Q_1$ is even,
and let $v_1$ and $v_2$ denote the ends of $Q_1$. Note that $Q_1$ and $Q_2 \cup Q_3$ are edge-disjoint
even $v_1v_2$-paths. It remains to find one more even $v_1v_2$-path that is
edge-disjoint with $Q_1 \cup (Q_2 \cup Q_3)$. Such a path is easily obtained by combining $P_1 \cup P_2$ with
the $u_1u_2$-path of $Q_o$ of appropriate parity. This completes the proof.
\end{proof}    

We remark that the converse of the above lemma does not hold in general. Clearly, $3$-connected bipartite \mcg s are counterexamples; however, one can also construct nonbipartite counterexamples such as certain subgraphs of $K_5$ and $K_6$. We conclude this section with the following consequence of Lemmas~\ref{lem:3-conn-mixed-bicycle-implies-K23-bisub} and \ref{lem:planar-ce-implies-K23free}.

\begin{cor}
\label{cor:mixed-bicycle-implies-not-cycle-extendable}
If a $3$-connected planar \mcg\ has a mixed bicycle then it is not \ce. \qed
\end{cor}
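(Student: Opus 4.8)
The plan is to derive this corollary directly by chaining the two preceding lemmas, since the statement is phrased as an immediate consequence and the real content lives in Lemmas~\ref{lem:3-conn-mixed-bicycle-implies-K23-bisub} and \ref{lem:planar-ce-implies-K23free}. First I would take a $3$-connected planar \mcg~$G$ that possesses a mixed bicycle $(Q_o, Q_e)$. Since $G$ is $3$-connected and has a mixed bicycle, Lemma~\ref{lem:3-conn-mixed-bicycle-implies-K23-bisub} applies verbatim and tells us that $G$ is $K_{2,3}$-based; that is, $G$ contains a subgraph that is a \bisub~of $K_{2,3}$. This is the substantive input, and it uses nothing about planarity.

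Next I would invoke planarity: $G$ is a planar \mcg, so Lemma~\ref{lem:planar-ce-implies-K23free} is applicable. That lemma states that a planar \mcg~which is \ce~must be $K_{2,3}$-free. Reading its contrapositive, a planar \mcg~that is $K_{2,3}$-based cannot be \ce. Since we have just established that $G$ is $K_{2,3}$-based, we conclude that $G$ is not \ce, which is exactly the claim.

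Because both ingredients are already proved earlier in the excerpt, there is essentially no obstacle here --- the corollary is a one-line composition of the two lemmas, which is presumably why it is stated with \qed~and no separate proof. The only point requiring the slightest care is confirming that the two lemmas share compatible hypotheses: Lemma~\ref{lem:3-conn-mixed-bicycle-implies-K23-bisub} requires $3$-connectedness (which we have), while Lemma~\ref{lem:planar-ce-implies-K23free} requires planarity and the matching-covered property (both of which we have assumed). Since $G$ is simultaneously $3$-connected, planar, and matching covered, both lemmas fire, and their conclusions compose cleanly. Thus the entire argument is: mixed bicycle $\Rightarrow$ $K_{2,3}$-based (by the first lemma) $\Rightarrow$ not \ce~(by the contrapositive of the second lemma).
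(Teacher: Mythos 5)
Your proposal is correct and is exactly the argument the paper intends: the corollary is stated as an immediate consequence of Lemmas~\ref{lem:3-conn-mixed-bicycle-implies-K23-bisub} and \ref{lem:planar-ce-implies-K23free}, and your composition (mixed bicycle plus $3$-connectedness gives $K_{2,3}$-based, then the contrapositive of the planarity lemma gives not \ce) is precisely that chain. The hypothesis check you include is sound, so nothing is missing.
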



\section{Planar \texorpdfstring{$K_{2,3}$}{}-free~bricks}
\label{section-4}

Our characterization of planar $K_{2,3}$-free bricks relies on two main ingredients, one of which is Lemma~\ref{lem:3-conn-mixed-bicycle-implies-K23-bisub}. The other one is the brick generation theorem due to~\cite{noth07}, which states that all simple bricks may be constructed from five infinite families and the Petersen graph by means of four operations. We refer to these families as {\em Norine-Thomas families}; furthermore, we refer to their members, as well as to the Petersen graph, as {\em Norine-Thomas bricks}. 

In the next section, we discuss the planar Norine-Thomas families, and we use Lemma~\ref{lem:3-conn-mixed-bicycle-implies-K23-bisub} to classify the $K_{2,3}$-free ones. This classification will serve as the base case in our inductive proof of the characterization of planar $K_{2,3}$-free bricks.


\subsection{Planar Norine-Thomas bricks}
\label{Section 4.2}
We begin this section by describing two of the five Norine-Thomas families --- odd wheels and odd prisms --- whose members happen to be planar as well as \ce. 

A graph obtained from an odd cycle $Q:=u_0u_1\ldots u_{2k}$, by adding a new vertex $h$ and edges $hu_i$ for each $i \in \{0,\ldots,2k\}$, is called an {\em odd wheel}, or simply a {\em wheel}. The vertex $h$ is called its {\em hub} and the cycle $Q$ is called its {\em rim}. Let $G$ be a wheel and let $C$ denote an even cycle. Clearly, $h\in V(C)$ and $G-V(C)$ is an odd path, whence $C$ is conformal. This proves the following.
\begin{prop}
    Wheels are \ce.\qed
\end{prop}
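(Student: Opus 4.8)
The plan is to verify the two conditions in the definition of a \ce\ graph for a wheel $W$ with rim $Q := u_0 u_1 \cdots u_{2k}$ and hub $h$: namely that $W$ is \mc, and that every even cycle of $W$ is conformal.

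First I would dispense with the \mc\ requirement, since cycle-extendability is defined only for \mcg s. Note that $W$ has $2k+2$ vertices. To place a spoke $hu_i$ in a \pema, delete $u_i$ from the odd rim $Q$ to obtain an even path, which matches perfectly along the rim; adjoining $hu_i$ yields a \pema\ through $hu_i$. To place a rim edge $u_iu_{i+1}$, match $h$ to an endpoint of the path $Q - \{u_i,u_{i+1}\}$ and then match the remaining even-order sub-path along the rim. Thus every edge lies in a \pema, and $W$ is \mc.

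The main content is cycle-extendability. Let $C$ be any even cycle of $W$. The first step is the observation that $h \in V(C)$: the rim edges of $W$ induce a single cycle, namely $Q$ itself, so the only cycle of $W$ avoiding the hub is $Q$, which is odd; hence no even cycle can avoid $h$. Since $h$ has degree two on $C$, the cycle consists of exactly two spokes $hu_a$ and $hu_b$ together with one of the two rim arcs joining $u_a$ and $u_b$. Consequently $V(C)$ is $h$ together with a contiguous arc of rim vertices, and $W - V(C)$ is precisely the complementary rim arc --- a single path.

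It then remains to do the parity bookkeeping. As $C$ is even, $|V(C)|$ is even, so $C$ contains $|V(C)|-1$ rim vertices, an odd number; since the rim has the odd number $2k+1$ of vertices, the complementary path $W - V(C)$ has an even number of vertices and hence admits a \pema. Therefore $C$ is conformal, and as $C$ was an arbitrary even cycle, $W$ is \ce. The argument is elementary throughout; the only place that needs care is this final parity chain --- that an even cycle forces an odd number of rim vertices onto $C$, leaving an even-order (and therefore perfectly matchable) complementary rim path.
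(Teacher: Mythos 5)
Your proof is correct and takes essentially the same route as the paper's one-line argument: every even cycle $C$ must contain the hub $h$ (since the only hub-avoiding cycle is the odd rim), and $W-V(C)$ is then a rim path of even order, hence matchable. You simply spell out the parity bookkeeping and additionally verify that wheels are matching covered, which the paper leaves implicit.
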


A graph obtained from two vertex-disjoint copies of an odd cycle, say $Q:=w_0w_1\ldots w_{2k}$ and $Q^{'}:=z_0z_1\ldots z_{2k}$, by adding edges $w_iz_i$ for each $i \in \{0,1,\ldots,2k\}$, is called an {\em odd prism}, or simply a {\em prism}. Observe that prisms are {\em cubic} --- that is, the degree of each vertex is precisely three.

In order to prove that prisms are \ce, we will first observe that they are near-bipartite and we shall locate all of their removable double ears; the reader may recall definitions from Section~\ref{Section-3.3}. Before that, we state a simplification of the notion of removable double ears that is applicable to irreducible graphs.

For a \mcg\ $G$, an edge $e$ is {\em removable} if $G-e$ is \mc, and a pair of distinct edges $R:=\{\alpha,\beta\}$ is a {\em removable doubleton} if neither $\alpha$ nor $\beta$ is removable but $G-R$ is \mc. For instance, the graph $R_8^{-}$, shown in Figure~\ref{fig:ear-dec-petersen}, has a removable doubleton depicted by thick lines. Note that, if $G$ is irreducible, each removable double ear is a removable doubleton and vice-versa. Furthermore, if $G$ is near-bipartite then, by Corollary~\ref{cor:near-brick-ver-near-bip}, the graph $G-R$ is bipartite for each removable doubleton $R:=\{\alpha,\beta\}$; also, if $A$ and $B$ denote the color classes of $G-R$ then (up to relabeling) the edge $\alpha$ has both ends in $A$ whereas $\beta$ has both ends in $B$. Using these observations, the following is easily proved; we shall it find very useful in future sections, and we shall invoke it implicitly.

\begin{lem}
\label{lem:4.4}
    For a near-bipartite graph $G$, if $R$ is any removable doubleton and $C$ is any even cycle, then $|C\cap R| \in \{0,2\}$. \qed
\end{lem}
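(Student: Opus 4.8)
The plan is to exploit the bipartite structure of $G-R$ through a parity argument. By the remarks immediately preceding the lemma (which invoke Corollary~\ref{cor:near-brick-ver-near-bip}), I may assume that $G-R$ is bipartite with color classes $A$ and $B$, that the edge $\alpha$ has both ends in $A$, and that the edge $\beta$ has both ends in $B$. Consequently, every edge of $G$ other than $\alpha$ and $\beta$ joins a vertex of $A$ to a vertex of $B$; I will call these the \emph{crossing} edges.

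Given an even cycle $C$, I would track the color classes of its vertices as I traverse it once. Assign to each vertex of $C$ the label $0$ if it lies in $A$ and the label $1$ if it lies in $B$; this yields a cyclic binary sequence. The elementary fact I rely on is that, in any cyclic sequence over $\{0,1\}$, the number of consecutive pairs whose labels differ is even. Now a consecutive pair of vertices of $C$ has differing labels exactly when the connecting edge is crossing, and has equal labels exactly when the connecting edge is $\alpha$ (both ends in $A$) or $\beta$ (both ends in $B$). Hence the number of crossing edges on $C$ equals the number of differing consecutive pairs, and so is even.

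Writing $n$ for the number of crossing edges on $C$, the length of $C$ equals $n + |C\cap R|$, since the non-crossing edges of $C$ are precisely those in $C\cap R$. Because $C$ is even and $n$ is even, $|C\cap R|$ must be even as well. As $R$ consists of only the two edges $\alpha$ and $\beta$, we have $|C\cap R| \in \{0,1,2\}$, and the parity conclusion forces $|C\cap R| \in \{0,2\}$, as required.

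The argument is short, and I do not anticipate a genuine obstacle. The one point that warrants care is the structural input — namely, that the two edges of the removable doubleton lie inside the two \emph{distinct} color classes of the bipartite graph $G-R$ — but this is exactly what Corollary~\ref{cor:near-brick-ver-near-bip} and the preceding discussion provide. Everything beyond that reduces to the parity fact for cyclic binary sequences.
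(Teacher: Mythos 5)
Your proof is correct and follows exactly the route the paper intends: the paper leaves Lemma~\ref{lem:4.4} as an easy consequence of the observations stated just before it (that $G-R$ is bipartite with $\alpha$ inside one color class and $\beta$ inside the other), and your cyclic parity count of crossing edges is the standard way to fill in that gap. No issues.
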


Let $G$ be a prism with vertex-disjoint odd cycles $Q:=w_0w_1\ldots w_{2k}$ and $Q^{'}:=z_0z_1\ldots z_{2k}$ as in the definition stated earlier. Observe that $\{w_iw_{i+1},z_iz_{i+1}\}$ is a removable doubleton of $G$ for each $i \in \{0,1,\ldots,2k\}$, where arithmetic is modulo $2k$. In particular, each vertex is incident with two distinct removable doubletons. Using this observation and Lemma~\ref{lem:4.4}, and the fact that $G$ is cubic, we infer that an even cycle $C$ of $G$ contains $w_i$ if and only if it contains~$z_i$; consequently, $M:=\{w_iz_i | i \in \{0,1, \dots ,2k\} {\rm ~and~ } \{w_i,z_i\} \cap V(C) = \emptyset \}$ is a \pema\ of $G-V(C)$; thus, $C$ is conformal. This proves the following.
\begin{prop}
\label{prop:prism ce}
    Prisms are \ce.\qed
\end{prop}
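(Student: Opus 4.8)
The goal is to prove Proposition~\ref{prop:prism ce}: that prisms are \ce. Let $G$ be a prism built from two disjoint odd cycles $Q := w_0 w_1 \dots w_{2k}$ and $Q' := z_0 z_1 \dots z_{2k}$, together with the ``rung'' edges $w_i z_i$ for $i \in \{0, 1, \dots, 2k\}$. My plan is to exploit the rich collection of removable doubletons that the text has just identified, combined with the cubicity of $G$, to show that every even cycle $C$ of $G$ picks up the two endpoints $w_i, z_i$ of each rung in a synchronized way, and then to read off an explicit \pema\ of $G - V(C)$ from the remaining rungs.

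First I would record the two structural facts that drive everything. For each $i$, the pair $R_i := \{w_i w_{i+1}, z_i z_{i+1}\}$ (indices mod $2k$) is a removable doubleton of $G$: deleting both edges of a rung-aligned pair of rim edges leaves a spanning subgraph that is still matching covered (indeed bipartite), which is exactly the near-bipartite structure promised by Corollary~\ref{cor:near-brick-ver-near-bip}. Since $G$ is cubic, each vertex $w_i$ lies on exactly the two rim edges $w_{i-1}w_i$ and $w_i w_{i+1}$ together with its rung $w_i z_i$; thus $w_i$ is incident with precisely the two removable doubletons $R_{i-1}$ and $R_i$, and symmetrically for $z_i$. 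This is the observation the text flags, and it is what lets Lemma~\ref{lem:4.4} do the combinatorial work.

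The heart of the argument is to show that an even cycle $C$ contains $w_i$ if and only if it contains $z_i$. Fix an even cycle $C$. By Lemma~\ref{lem:4.4}, for every removable doubleton $R_i$ we have $|C \cap R_i| \in \{0, 2\}$, so $C$ uses the rim edge $w_i w_{i+1}$ exactly when it uses the parallel rim edge $z_i z_{i+1}$. I would now analyze how $C$ passes through a vertex $w_i$. Because $G$ is cubic, if $w_i \in V(C)$ then $C$ uses exactly two of the three edges at $w_i$, namely two of $\{w_{i-1}w_i,\ w_i w_{i+1},\ w_i z_i\}$; a symmetric statement holds at $z_i$. Combining the rung-synchronization of rim edges (forced by $R_{i-1}$ and $R_i$ via Lemma~\ref{lem:4.4}) with the degree-two usage at each visited vertex, one checks case by case that it is impossible for $C$ to visit $w_i$ while missing $z_i$: the parity constraints on which rim edges appear, together with the fact that the rung $w_i z_i$ can only be traversed by both endpoints or neither, force $w_i \in V(C) \iff z_i \in V(C)$. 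This equivalence is the step I expect to be the main obstacle, since it requires carefully ruling out the ``twisted'' configurations in which $C$ slips from one odd cycle to the other through rungs in an unbalanced way; the cubicity and the $\{0,2\}$ constraint of Lemma~\ref{lem:4.4} are precisely the levers that close off those cases.

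Once the equivalence $w_i \in V(C) \iff z_i \in V(C)$ is established, the conclusion is immediate. Define
\[
  M := \{\, w_i z_i \;:\; i \in \{0, 1, \dots, 2k\} \text{ and } \{w_i, z_i\} \cap V(C) = \emptyset \,\}.
\]
Every vertex not on $C$ is some $w_i$ or $z_i$ with both $w_i$ and $z_i$ off $C$ (by the equivalence), so each such vertex is covered by exactly the single rung edge $w_i z_i \in M$, and these rungs are pairwise vertex-disjoint. Hence $M$ is a \pema\ of $G - V(C)$, so $C$ is conformal. Since $C$ was an arbitrary even cycle, every even cycle of $G$ is conformal, and therefore $G$ is \ce, completing the proof.
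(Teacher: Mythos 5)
Your proposal is correct and follows essentially the same route as the paper: identifying the removable doubletons $R_i:=\{w_iw_{i+1},z_iz_{i+1}\}$, invoking Lemma~\ref{lem:4.4} together with cubicity to deduce that an even cycle contains $w_i$ if and only if it contains $z_i$, and then taking the leftover rungs as the required perfect matching of $G-V(C)$. You merely spell out the case analysis at each visited vertex that the paper leaves implicit.
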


There are two more Norine-Thomas families each of whose member is planar; these are ``staircases'' and ``truncated biwheels''. The smallest member of each of these families is the triangular prism~$\overline{C_6}$. We refer the reader to \cite{km16} for their descriptions --- using which they may easily verify the following.

\begin{prp}
\label{prp:staircases-truncated-biwheels-K23based}
Every staircase as well as truncated biwheel, except $\overline{C_6}$, has a mixed bicycle; consequently, each of them is $K_{2,3}$-based.
\qed
\end{prp}

To summarize, we have proved the following.
\begin{cor}{\sc[Planar $K_{2,3}$-free Norine-Thomas Bricks]} \newline
\label{summarize-norine-thomas-bricks}
    Wheels and prisms are the only planar Norine-Thomas bricks that are {\mbox {$K_{2,3}$-free}}; in fact, they are also \ce.\qed
\end{cor}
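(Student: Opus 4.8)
The plan is to assemble this corollary directly from the facts already established about each of the five Norine-Thomas families, partitioning them into the two ``good'' families and the three that must be ruled out. The statement has two halves: first, that wheels and prisms are the only planar $K_{2,3}$-free Norine-Thomas bricks; second, that these two families are moreover \ce. The second half is immediate from the earlier propositions --- I would simply cite the proposition that wheels are \ce\ together with Proposition~\ref{prop:prism ce} that prisms are \ce.

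For the first half, I would begin by recalling that the Norine-Thomas families comprise exactly five infinite families together with the Petersen graph, and that among these only four families --- odd wheels, odd prisms, staircases, and truncated biwheels --- are planar (the remaining family and the Petersen graph being nonplanar, hence irrelevant to a classification of \emph{planar} $K_{2,3}$-free bricks). Among the four planar families, wheels and prisms are handled by showing they \emph{are} $K_{2,3}$-free; staircases and truncated biwheels (other than $\overline{C_6}$) are handled by showing they are \emph{not}.

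The heart of the argument is the elimination of staircases and truncated biwheels, and here I would lean entirely on Proposition~\ref{prp:staircases-truncated-biwheels-K23based}, which asserts that every staircase and every truncated biwheel except $\overline{C_6}$ contains a mixed bicycle and is therefore $K_{2,3}$-based via Lemma~\ref{lem:3-conn-mixed-bicycle-implies-K23-bisub} (bricks being $3$-connected by Proposition~\ref{prp:brick-brace-3-connected}). The only delicate point is the exceptional graph $\overline{C_6}$, the triangular prism, which is the smallest member shared by both families; since it is itself a prism, it is already accounted for in the prism family and is $K_{2,3}$-free, so it does not constitute a new exception to the classification. Thus every planar Norine-Thomas brick outside the wheel and prism families is $K_{2,3}$-based, which is exactly the contrapositive of what we need.

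The remaining obligation --- the step I expect to require the most care --- is verifying that wheels and prisms really are $K_{2,3}$-free, since the corollary claims these two families exhaust the $K_{2,3}$-free ones and so cannot themselves be $K_{2,3}$-based. Here I would use Lemma~\ref{lem:planar-ce-implies-K23free}: since wheels and prisms are planar and \ce\ (already established), they must be $K_{2,3}$-free. This is the cleanest route, as it sidesteps any direct search for bisubdivisions of $K_{2,3}$ inside these graphs and instead derives $K_{2,3}$-freeness as a consequence of cycle-extendability. Combining the two halves yields the corollary, and the main subtlety throughout is merely the bookkeeping of the shared smallest member $\overline{C_6}$ across the prism and truncated-biwheel families.
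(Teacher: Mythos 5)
Your proposal is correct and follows essentially the same route the paper intends: the corollary is a summary of Propositions on wheels and prisms being \ce, Proposition~\ref{prp:staircases-truncated-biwheels-K23based} eliminating staircases and truncated biwheels (with $\overline{C_6}$ absorbed into the prism family), and Lemma~\ref{lem:planar-ce-implies-K23free} to convert cycle-extendability of wheels and prisms into $K_{2,3}$-freeness. Your handling of the nonplanar fifth family and the Petersen graph, and of the shared smallest member $\overline{C_6}$, matches the paper's bookkeeping.
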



In the next section, we describe the Norine-Thomas brick generation theorem.

\subsection{Norine-Thomas brick generation theorem}

We first state the induction viewpoint of the Norine-Thomas result using the terminology of~\cite{clm15}. 

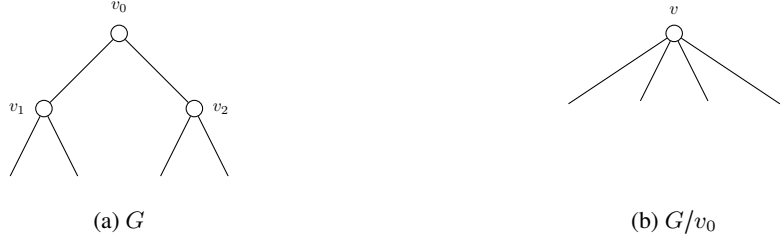
\begin{figure}[!htb]
    \centering
    \begin{subfigure}{0.5\textwidth}
            \centering
            \begin{tikzpicture}[every node/.style={draw=black, circle,scale=0.66}, scale=0.5]
                \node[label=above:{$v_0$}] (a1) at (0,0){};
                \node[label=left:{$v_1$}] (a2) at (-2,-2){};
                \node[label=right:{$v_2$}] (a3) at (2,-2){};
                \node[draw = none] (a4) at (-3,-4){};
                \node[draw = none] (a5) at (-1,-4){};
                \node[draw = none] (a6) at (3,-4){};
                \node[draw = none] (a7) at (1,-4){};
                \draw (a5) -- (a2) -- (a4);
                \draw (a7) -- (a3) -- (a6);
                \draw (a2) -- (a1) -- (a3);
            \end{tikzpicture}
         \caption{$G$}
    \end{subfigure}%
     \begin{subfigure}{0.5\textwidth}
            \centering
            \begin{tikzpicture}[every node/.style={draw=black, circle,scale=0.66}, scale=0.5]
                \node[label=above:{$v$}] (a1) at (0,0){};
                \node[draw = none] (a4) at (-3,-2){};
                \node[draw = none] (a5) at (-1,-2){};
                \node[draw = none] (a6) at (3,-2){};
                \node[draw = none] (a7) at (1,-2){};
                \draw (a5) -- (a1) -- (a4);
                \draw (a7) -- (a1) -- (a6);
                \node[draw = none] (a2) at (-3,-4){};
            \end{tikzpicture}
         \caption{$G/v_0$}
    \end{subfigure}%
    \caption{an illustration of the bicontraction operation}
    \label{fig:bicontraction operation}
\end{figure}

Let $G$ be a \mcg, and let $v_0$ be a vertex of degree two that has two distinct neighbors $v_1$ and $v_2$. The {\em bicontraction} of $v_0$ is the operation of contracting the two edges $v_0v_1$ and $v_0v_2$, and we use $G/v_0$ to denote the resulting graph; see Figure~\ref{fig:bicontraction operation}. Note that $\partial(X)$ is a tight cut, where $X:=\{v_0,v_1,v_2\}$, and $G/v_0$ is simply the $\partial(X)$-contraction $G/X$, as defined in Section~\ref{Section:tight-cut}; consequently, $G/v_0$ is \mc. It is worth noting that even if $G$ is simple, $G/v_0$ need not be simple. 
The following is easily proved and we shall use its contrapositive in our proof of the characterization of planar $K_{2,3}$-free bricks. 

\begin{lem}
\label{lem:bi-contraction-preserves-J-bi-subdivisions}
Let $G$ be a \mcg~and let $v_0$ denote a vertex of degree two that has two distinct neighbors.
 For any graph~$K$ with maximum degree $\Delta(K) \leq 3$,
if $G/v_0$ is $K$-based then $G$ is also $K$-based. \qed
\end{lem}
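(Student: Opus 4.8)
The plan is to prove the contrapositive statement directly, but it is cleaner here to argue the statement as written: assuming $G/v_0$ contains a bisubdivision of $K$, I want to recover a bisubdivision of $K$ inside $G$. Let me set up notation matching the excerpt: $v_0$ is the degree-two vertex with distinct neighbors $v_1$ and $v_2$, and $G/v_0$ is obtained by contracting both edges $v_0v_1$ and $v_0v_2$ into a single vertex, call it $v$. The one structural fact I need about this contraction is that it merges exactly the three vertices $v_0, v_1, v_2$ into $v$, and since $v_0$ had degree two, every edge of $G/v_0$ incident with $v$ corresponds to an edge of $G$ incident with either $v_1$ or $v_2$ (never with $v_0$, whose only two incidences were the contracted edges). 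In other words, $G$ is recovered from $G/v_0$ by ``splitting'' $v$ back into $v_1$ and $v_2$, distributing the edges formerly at $v$ between them, and reconnecting $v_1$ and $v_2$ through the length-two path $v_1 v_0 v_2$.

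First I would take a subgraph $H \subseteq G/v_0$ that is a bisubdivision of $K$, and analyze how $H$ interacts with the special vertex $v$. Every edge of $H$ and every vertex of $H$ other than $v$ lifts verbatim to $G$, since the contraction only affected $v$. The only issue is the vertex $v$ itself. If $v \notin V(H)$, then $H$ is literally a subgraph of $G$ and we are done immediately. So assume $v \in V(H)$, and let $d := d_H(v)$ be its degree in $H$. Because $H$ is a bisubdivision of $K$, each vertex of $H$ either is a subdivision vertex of degree two or corresponds to a branch vertex of $K$, and in the latter case its degree in $H$ equals its degree in $K$, which is at most $\Delta(K) \le 3$. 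Hence $d \in \{2,3\}$. This is precisely where the hypothesis $\Delta(K) \le 3$ enters and does the essential work.

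The key step is then the local surgery at $v$. The $d$ edges of $H$ at $v$ each correspond to an edge of $G$ incident with $v_1$ or with $v_2$; so in $G$ these $d$ edges are partitioned according to whether their surviving end is $v_1$ or $v_2$. When $d = 2$, I can route both edges through the recovered path: if both attach at the same vertex (say both at $v_1$), then $v$ lifts simply to $v_1$ and $H$ lifts directly; if they attach at different vertices $v_1$ and $v_2$, then in $G$ the two edges together with the path $v_1 v_0 v_2$ form a path of length two more than before, i.e. we replace the degree-two vertex $v$ of $H$ by the length-two path $v_1 v_0 v_2$ — a subdivision, and in fact a \emph{bi}subdivision since it inserts two subdivision vertices, keeping parities correct. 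When $d = 3$, the three edges split as $2+1$ or $3+0$ between $v_1$ and $v_2$; in the $3+0$ case $v$ lifts to $v_1$ directly, and in the $2+1$ case I keep $v_1$ as the branch vertex carrying two of the edges and route the third edge through the path $v_1 v_0 v_2$ to $v_2$, again replacing the branch vertex $v$ by an edge $v_1 v_0 v_2$-segment that preserves the bisubdivision structure. In every case the resulting subgraph of $G$ is again a bisubdivision of $K$, so $G$ is $K$-based.

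The step I expect to be the main obstacle, and the one requiring the most care, is verifying that the parities are preserved so that the lifted object is genuinely a \emph{bi}subdivision (even number of inserted vertices on each branch) rather than a mere topological subdivision. This hinges on the observation that the contracted configuration $v_1 v_0 v_2$ contributes exactly two vertices ($v_0$ plus the merge) when a branch of $H$ is rerouted through it, so that whenever we reinsert the path we add precisely two subdivision vertices to that branch — an even number — keeping each branch of even subdivision-length. The degree bound $\Delta(K) \le 3$ is what guarantees $d_H(v) \le 3$, which in turn ensures that at most one branch of the bisubdivision needs rerouting through $v_1 v_0 v_2$ while the remaining incidences can be anchored cleanly at a single one of $v_1, v_2$; if $v$ could have degree four or more in $H$ this routing argument would break down, which is exactly why the hypothesis cannot be dropped.
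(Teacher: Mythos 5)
Your argument is correct and is precisely the routine case analysis the paper omits (the lemma is stated with its proof left as ``easily proved''): use $\Delta(K)\le 3$ to bound $d_H(v)\le 3$, split the edges of $H$ at the contraction vertex between $v_1$ and $v_2$, and reroute at most one branch through $v_1v_0v_2$, which inserts exactly two vertices and so preserves the bisubdivision parity. The only spot worth a touch more care is when $v$ is a \emph{branch} vertex of degree two whose two incident edges separate onto $v_1$ and $v_2$: one must designate $v_1$ (say) as the new branch vertex so that both inserted vertices $v_0,v_2$ land on a single branch path, rather than one on each, but this is exactly the bookkeeping your parity remark already encodes.
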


Now, let simple $G$ be a brick and $e$ denote a removable edge as defined in the preceding section. Since each vertex of $G$ has three or more distinct neighbors, the \mcg\ $G-e$ is irreducible\ and it has zero, one or two vertices of degree two. The \mcg~$J$ obtained from $G-e$ by bicontracting each of its vertices of degree two (if any) is referred to as the {\em retract of $G-e$}. Note that its minimum degree $\delta(J) \geq 3$; however, $J$ need not be a brick. For instance, in the Petersen graph $\mathbb{P}$, shown in Figure~\ref{fig:the petersen graph}, each edge is removable; however, the reader may verify that $b(J) = 2$, where $J$ is retract of $\mathbb{P}-e$ for any edge $e$.
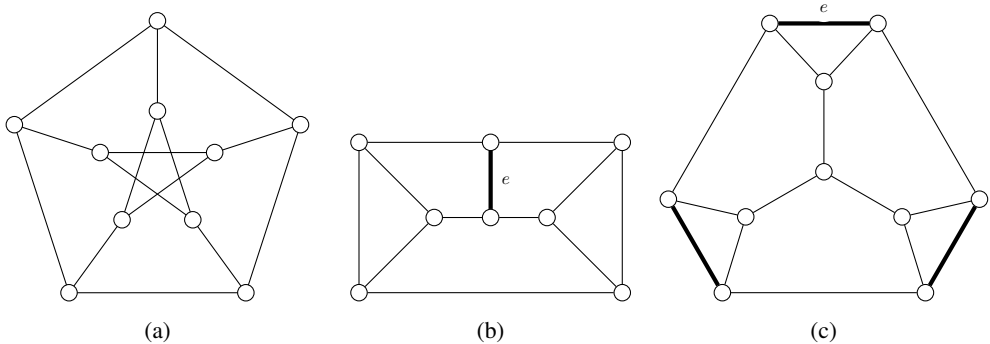
\begin{figure}[!htb]
    \centering
    \begin{subfigure}{0.3\textwidth}
            \centering
            \begin{tikzpicture}[every node/.style={draw=black, circle, scale=0.66,fill=white}, scale=0.4]
                 \foreach \x in {0,72,...,288}
                    {
                        \pgfmathsetmacro\index{\x/72+6}
                        \begin{pgfonlayer}{background}
                            \draw (\x+18:2) -- (\x+2*72+18:2);
                            \draw (\x+18:5) -- (\x+72+18:5);
                            \draw (\x+18:2) -- (\x+18:5);
                        \end{pgfonlayer}
                            \node at (\x+3*72+18:2) {};
                        \pgfmathsetmacro\nindex{5-\x/72}
                            \node at (\x+3*72+18:5) {};
                    }
            \end{tikzpicture}
         \caption{}
        \label{fig:the petersen graph}
    \end{subfigure}%
    \begin{subfigure}{0.3\textwidth}
        \centering
         \begin{tikzpicture}[every node/.style={draw=black, circle,scale=0.66}, scale=0.5]
               \node (a1) at (2,2){};
               \node (a2) at (3.5,2){};
               \node (a3) at (5,2){};
               \node (a7) at (7,0){};
               \node (a8) at (0,0){};
               \node (a4) at (0,4){};
               \node (a5) at (3.5,4){};
               \node (a6) at (7,4){};
               \draw (a1) -- (a2) -- (a3) -- (a6) -- (a5) -- (a4) -- (a1); 
               \draw (a4) -- (a8) -- (a7) -- (a3);
               \draw (a8) -- (a1);
               \draw (a6) -- (a7);
               \draw[ultra thick] (a2) --node[right,draw=none]{$e$} (a5);
    \end{tikzpicture}    
        \caption{}
        \label{fig:R_8}
    \end{subfigure}%
    \begin{subfigure}{0.3\textwidth}
        \centering
         \begin{tikzpicture}[every node/.style={draw=black, circle,scale=0.66,fill=white}, scale=0.3,rotate=180]
                \node (a0) at (0:0) {};
                \node (a1) at (30:4) {};
                \node (a8) at (50:7) {};
                \node (a9) at (10:7) {};
                \draw (a1) -- (a8);
                \draw[ultra thick] (a8)-- (a9);
                \draw (a9) -- (a1);

                \node (a3) at (90:-4) {};
                \node (a4) at (70:-7) {};
                \node (a5) at (110:-7) {};
                \draw (a3) -- (a4) -- (a5) -- (a3);
                \draw[ultra thick] (a4) --node[above,draw=none]{$e$} (a5);
                
                \node (a2) at (330:-4) {};
                \node (a6) at (310:-7) {};
                \node (a7) at (350:-7) {};
                \draw (a2) -- (a6);
                \draw[ultra thick] (a6)-- (a7);
                \draw (a7) -- (a2);

                \draw (a9) -- (a5);
                \draw (a4) -- (a7);
                \draw (a6) -- (a8); 
                
                \draw (a1) -- (a0) -- (a2);
                \draw (a0) -- (a3);
         \end{tikzpicture}    
        \caption{}
        \label{fig:tricorn}
    \end{subfigure}%
 \caption{(a) the Petersen graph $\mathbb{P}$; (b) the bicorn $R_8$; (c) the tricorn $R_{10}$.}
\end{figure}

In light of the above discussion, a removable edge $e$ of a brick $G$ is a {\em thin edge} if the retract of $G-e$ is also a brick. For instance, the bicorn $R_8$, shown in Figure~\ref{fig:R_8}, has a unique removable edge $e$ that also happens to be thin --- since the retract of $R_8-e$ is the brick $K_4$ with multiple edges. \cite{clm06} proved that every brick, except $K_4$,  triangular prism~$\overline{C_6}$ and the Petersen graph, has a thin edge.

A thin edge $e$ of a simple brick $G$ is a {\em strictly thin edge} if the retract of $G-e$ is also simple. For instance, the tricorn~$R_{10}$, shown in Figure~\ref{fig:tricorn}, has precisely three strictly thin edges; for each such edge, say $e$, the retract of $R_{10}-e$ is precisely the wheel $W_5$. On the other hand, the bicorn $R_8$ has no strictly thin edges.
We are now ready to state the generation theorem for simple bricks due to~\cite{noth07}.

\begin{thm}{\sc [Strictly Thin Edge Theorem]}\newline
\label{Strictly Thin Edge Theorem}
    Every simple brick, except for the Norine-Thomas bricks, has a strictly thin edge. 
\end{thm}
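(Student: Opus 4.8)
The plan is to derive the theorem from the weaker \emph{thin edge theorem} of~\cite{clm06} quoted just above, upgrading ``thin'' to ``strictly thin'' by controlling exactly when a retract fails to be simple. I would argue by contradiction, taking $G$ to be a simple brick that is \emph{not} a Norine--Thomas brick and that has \emph{no} strictly thin edge, chosen with the fewest vertices among all such graphs. Since $G$ is not $K_4$, $\overline{C_6}$ or the Petersen graph, \cite{clm06} guarantees at least one thin edge; the whole difficulty is that none of its thin edges can have a simple retract.

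First I would pin down the local reason a retract becomes non-simple. As $G$ is simple with $\delta(G) \geq 3$, deleting a removable edge $e = xy$ forces a bicontraction only at an endpoint of degree exactly three; bicontracting such an endpoint $x$, with remaining neighbours $a$ and $b$, merges $\{x,a,b\}$ into one vertex and therefore creates a parallel edge precisely when $a$ and $b$ have a common neighbour $c \neq x$ --- that is, precisely when $x$ lies on a quadrilateral $x\,a\,c\,b\,x$ of $G$. I would catalogue this configuration, together with the finitely many ways in which two such bicontractions (one at each end of $e$) can interact, and record the resulting dichotomy: a thin edge $e$ fails to be strictly thin if and only if $G$ contains one of these short, explicitly described subgraphs incident with $e$.

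Next I would turn the hypothesis ``every thin edge is obstructed'' into rigid global structure. Because $G$ is $3$-connected and has no nontrivial tight cut, the obstructing quadrilaterals cannot occur in isolation: each one forces its neighbouring vertices to again have degree three carrying their own obstructing quadrilaterals, and I would show that the only way these patterns can close up consistently is by wrapping around a hub (giving an odd wheel), by joining two odd cycles rung-by-rung (giving an odd prism), or by stacking quadrilaterals into a cylindrical ladder (giving a staircase, a truncated biwheel, or the remaining non-planar family). Each such conclusion contradicts the assumption that $G$ is not a Norine--Thomas brick, completing the induction.

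The main obstacle is this last step: ruling out that the obstructing patterns assemble into some \emph{other} simple brick lying outside the five families yet still admitting no strictly thin edge. This is where purely local surgery is insufficient and the global structure theory of bricks must be invoked --- one must organise the argument around the cyclic arrangement of degree-three vertices and the interaction pattern of the removable doubletons they induce, and use the minimality of $G$ to pass to a strictly smaller brick the instant a strictly thin edge surfaces anywhere. Managing the proliferation of cases in this assembly step, rather than any single clever idea, is what makes the theorem genuinely hard, and is the reason its proof occupies the bulk of~\cite{noth07}.
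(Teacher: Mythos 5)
The paper does not prove this statement at all: it is the brick generation theorem of Norine and Thomas, imported verbatim from~\cite{noth07} (the paper merely states it and uses it as a black box in the proof of Theorem~\ref{thm:simple-planar-K23free-bricks}). So there is no in-paper argument to compare yours against; the only question is whether your proposal stands on its own as a proof. It does not. What you have written is a strategy outline in which every genuinely hard step is deferred. The local analysis --- cataloguing when a bicontraction creates parallel edges --- is plausible and roughly correct (a parallel edge arises when the two surviving neighbours of the bicontracted vertex share a neighbour, and one must also handle the interaction of two bicontractions at the two ends of $e$ and edges between the two contraction vertices), but the decisive step is the global one: showing that a simple brick in which \emph{every} thin edge is obstructed in this way must be an odd wheel, an odd prism, a staircase, a truncated biwheel, a member of the fifth family, or the Petersen graph. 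You assert that the obstructing quadrilaterals ``can only close up consistently'' into these shapes, but you give no argument, and you yourself identify this as ``the main obstacle.'' Asserting the conclusion of the theorem is not a proof of it; this assembly argument is precisely the content of the (roughly fifty-page) Norine--Thomas paper.

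A secondary structural problem: your induction/minimality setup does no work. You take $G$ to be a vertex-minimum counterexample, i.e.\ a non-Norine--Thomas simple brick with no strictly thin edge, and say you will ``pass to a strictly smaller brick the instant a strictly thin edge surfaces.'' But by hypothesis no strictly thin edge ever surfaces in $G$, and you never construct a smaller counterexample from $G$, so minimality is never invoked and the argument is not actually an induction --- it reduces to the bare (unproven) implication ``all thin edges obstructed $\Rightarrow$ Norine--Thomas brick.'' There is also a small dependency issue worth flagging: you bootstrap from the thin-edge theorem of~\cite{clm06}, which is legitimate as a starting point, but be aware that the existence of a thin edge only gets you one candidate edge to analyse; upgrading requires controlling \emph{all} removable edges of $G$, not just the one thin edge that~\cite{clm06} hands you.
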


In order to prove our characterization of planar $K_{2,3}$-free bricks, we require the generation viewpoint of the above theorem which states that every simple brick may be generated from a Norine-Thomas brick by a sequence of four ``expansion operations'' as described by~\cite{clm06}. 

Let $H$ be a graph that has a vertex of degree at least four, say $v$. Consider any graph $G$ that is obtained from $H$ by replacing the vertex $v$ by two new vertices $v_1$ and $v_2$, distributing the edges of $\partial_{H}(v)$ amongst $v_1$ and $v_2$ so that each of them receives at least two, and then adding a new vertex $v_0$ as well as edges $v_0v_1$ and $v_0v_2$. We say that $G$ is obtained from $H$ by {\em bisplitting} the vertex $v$. Observe that $H=G/v_0$; see Figure~\ref{fig:bicontraction operation}. It is easily seen that $G$ is matching covered if and only if $H$ is matching covered. 

Each of the aforementioned four ``expansion operations'' consists of bisplitting zero, one or two vertices of a simple brick $J$ and then adding a suitable edge $e$ in order to obtain a larger simple brick $G$. Conversely, $e$ is a strictly thin edge of $G$ and $J$ is the retract of $G-e$. The interested reader may refer to~\cite{clm06} for the exact descriptions of these expansion operations. However, we shall only use the bisplitting operation in our proof of the characterization of planar $K_{2,3}$-free bricks; in particular, we will find the following observation useful.

\begin{cor}
\label{lem:bisplitting-preserves-mixed-bicycle}
Let $G$ denote a graph obtained from a graph~$H$ by bisplitting a vertex of degree four or more.
If $H$ contains a mixed bicycle then so does $G$. \qed
\end{cor}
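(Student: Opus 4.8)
The plan is to track how each of the two cycles of a mixed bicycle behaves under the local surgery that bisplitting performs. Recall that bisplitting alters $H$ only in a neighborhood of the split vertex $v$: it replaces $v$ by the three-vertex path $v_1 v_0 v_2$ and redistributes the edges of $\partial_H(v)$ between $v_1$ and $v_2$, while every edge of $H$ not incident with $v$ survives unchanged in $G$. So the natural strategy is to start from a mixed bicycle of $H$ and rebuild each of its two cycles inside $G$, checking that parities and vertex-disjointness are preserved.

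First I would fix a mixed bicycle $(Q_o, Q_e)$ in $H$ and observe that, since $Q_o$ and $Q_e$ are vertex-disjoint, at most one of them can pass through $v$. If neither does, then both cycles use only edges avoiding $v$, so both persist verbatim in $G$, and $(Q_o, Q_e)$ is already a mixed bicycle of $G$.

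The only substantive case is when one cycle, say $Q$, passes through $v$ via two (distinct) edges to neighbors $a$ and $b$. In $G$ these two edges become incident with $v_1$ or $v_2$. If both land on the same split vertex $v_i$, I simply replace $v$ by $v_i$ in $Q$, leaving its length --- and hence its parity --- unchanged. If they land on different split vertices, I reroute $Q$ through $v_0$: the length-two subpath $a\,v\,b$ is replaced by the length-four subpath $a\,v_1\,v_0\,v_2\,b$, so the cycle length increases by exactly $2$ and the parity is again preserved. In both subcases the rebuilt cycle uses only vertices of $Q \setminus \{v\}$ together with a subset of $\{v_0,v_1,v_2\}$, none of which lie on the untouched companion cycle; hence the two cycles remain vertex-disjoint, and the even one retains length at least four, so it is still simple in the required sense.

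The argument is essentially routine, so I do not expect a real obstacle; the one point demanding care is the parity bookkeeping in the split case, where it is precisely the fact that rerouting through $v_0$ changes the length by an \emph{even} amount (namely $+2$) that guarantees odd cycles stay odd and even cycles stay even. Assembling the three cases then produces a mixed bicycle of $G$, which proves the corollary.
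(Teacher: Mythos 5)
Your proof is correct and is exactly the direct verification that the paper leaves to the reader (the corollary is stated with a \qed and no written argument): the case analysis on whether the cycle through the split vertex $v$ has its two edges at $v$ assigned to the same shore $v_i$ or to different ones, with the reroute $a\,v_1\,v_0\,v_2\,b$ changing the length by $+2$, is the intended bookkeeping, and your checks of parity, vertex-disjointness, and the length-at-least-four condition on the even cycle are all sound. The paper also sketches an alternative deduction --- viewing a mixed bicycle as a bisubdivision of the disjoint union of $C_3$ and $C_4$ and applying Lemma~\ref{lem:bi-contraction-preserves-J-bi-subdivisions} --- but your argument is essentially what one would write to justify that lemma in this special case anyway.
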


The above can also be deduced from Lemma~\ref{lem:bi-contraction-preserves-J-bi-subdivisions} by noting that a mixed bicycle is simply a bisubdivision of the disjoint union of $C_{3}$ and $ C_{4}$, and that bicontraction may be viewed as the ``inverse'' of bisplitting.

\subsection{Characterization of planar \texorpdfstring{$K_{2,3}$}{}-free bricks}
We are now ready to present our characterization of planar $K_{2,3}$-free bricks.

\begin{thm}{\sc [Planar $K_{2,3}$-free Bricks]}\newline
\label{thm:simple-planar-K23free-bricks}
A simple planar brick is $K_{2,3}$-free if and only if it is either a wheel or a prism.
\end{thm}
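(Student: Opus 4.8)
\textbf{The plan is to prove this by induction} using the Strictly Thin Edge Theorem (Theorem~\ref{Strictly Thin Edge Theorem}) as the engine and the classification of planar $K_{2,3}$-free Norine-Thomas bricks (Corollary~\ref{summarize-norine-thomas-bricks}) as the base case. The ``if'' direction is already done: wheels and prisms are $K_{2,3}$-free (this follows since they are planar and \ce\ by the propositions in Section~\ref{Section 4.2}, and \ce\ planar graphs are $K_{2,3}$-free by Lemma~\ref{lem:planar-ce-implies-K23free}). So the real content is the ``only if'' direction, for which I would induct on the number of vertices of a simple planar $K_{2,3}$-free brick $G$.

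\textbf{For the inductive step}, suppose $G$ is a simple planar $K_{2,3}$-free brick that is \emph{not} a Norine-Thomas brick. By the Strictly Thin Edge Theorem, $G$ has a strictly thin edge $e$, so that the retract $J$ of $G-e$ is again a simple brick, and $G$ is obtained from $J$ by one of the four expansion operations --- each of which bisplits zero, one or two vertices of $J$ and then adds the edge $e$. The key idea is to transfer the two hypotheses from $G$ down to $J$. First, $J$ must be planar: bisplitting a vertex and adding one edge is a planarity-respecting operation in reverse (contracting the two edges at the degree-two vertex $v_0$ and deleting $e$ cannot destroy planarity), so $J$ is a minor of $G$ and hence planar. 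Second, and this is where Lemma~\ref{lem:bi-contraction-preserves-J-bi-subdivisions} does the heavy lifting, $J$ must be $K_{2,3}$-free: since $\Delta(K_{2,3}) = 3 \leq 3$, the contrapositive of that lemma says that if a bicontraction $J = G/v_0$ were $K_{2,3}$-based then $G$ would be $K_{2,3}$-based, contradicting our hypothesis on $G$; applying this across the (at most two) bicontractions that recover $J$ from $G - e$ shows $J$ is $K_{2,3}$-free. Thus $J$ is a simple planar $K_{2,3}$-free brick with fewer vertices, so by induction $J$ is a wheel or a prism.

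\textbf{The main obstacle} --- and where I expect the bulk of the case analysis to lie --- is the final step: showing that \emph{no} expansion operation applied to a wheel or a prism can produce a simple planar $K_{2,3}$-free brick other than another wheel or prism. Here is where Lemma~\ref{lem:3-conn-mixed-bicycle-implies-K23-bisub} (and its corollary) becomes the decisive tool. The strategy is to show that, apart from the transitions that keep us inside the wheel and prism families (adding a spoke to grow a wheel's rim, or adding a rung to grow a prism), every other expansion of a wheel or prism creates a \emph{mixed bicycle} --- a disjoint odd cycle together with an even cycle of length at least four. Since $G$ is $3$-connected (bricks are $3$-connected by Proposition~\ref{prp:brick-brace-3-connected}) and planar, Corollary~\ref{cor:mixed-bicycle-implies-not-cycle-extendable}, or more directly Lemma~\ref{lem:3-conn-mixed-bicycle-implies-K23-bisub}, would then force $G$ to be $K_{2,3}$-based, a contradiction. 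Concretely, bisplitting the hub of a wheel, or bisplitting a rim/prism vertex so as to break the tight triangular or rung structure, typically leaves behind an odd cycle (a surviving portion of a rim) disjoint from a genuine even cycle, yielding the mixed bicycle; I would enumerate the admissible edge-distributions in each bisplitting and verify that the only escapes preserving both planarity and mixed-bicycle-freeness are the family-preserving ones. Carefully ruling out the simultaneous two-vertex bisplittings, and checking the small exceptional bricks ($K_4$, $\overline{C_6}$, the Petersen graph) that have no thin or strictly thin edge, will round out the argument.
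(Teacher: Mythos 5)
Your proposal is correct and follows essentially the same route as the paper: induction anchored at Corollary~\ref{summarize-norine-thomas-bricks}, the Strictly Thin Edge Theorem to produce the retract $J$, Lemma~\ref{lem:bi-contraction-preserves-J-bi-subdivisions} to pass $K_{2,3}$-freeness down to $J$, and Lemma~\ref{lem:3-conn-mixed-bicycle-implies-K23-bisub} to kill the expansions. The only simplification you are missing is that there are no ``family-preserving'' expansions to enumerate --- since wheels and prisms are themselves Norine-Thomas bricks, the inductive step only treats $G$ that are not Norine-Thomas, so every expansion must lead to a contradiction; the paper gets this quickly by noting that a bisplitting forces $J$ to be a wheel whose hub is bisplit (creating a mixed bicycle), while the zero-bisplitting case is settled by inspecting $|V(J)|=6$ versus $|V(J)|\geq 8$.
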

\begin{proof}
In light of Corollary \ref{summarize-norine-thomas-bricks},
we only need to prove the forward direction.
To this end, let $G$ denote a simple planar brick that is $K_{2,3}$-free. We proceed by induction on
the number of edges.

If $G$ is a \NT\ brick then we are done by Corollary~\ref{summarize-norine-thomas-bricks}.
Otherwise, by Theorem~\ref{Strictly Thin Edge Theorem}, 
$G$ has a strictly thin edge, say~$e$, and we let $J$ denote the retract of $G-e$. Since $G$ is $K_{2,3}$-free, so
is $G-e$; by Lemma~\ref{lem:bi-contraction-preserves-J-bi-subdivisions}, so is $J$. Ergo,
$J$ is a simple planar brick that is $K_{2,3}$-free; by the induction hypothesis, $J$ is either a wheel
or a prism. In particular, $G$ is obtained from $J$ by zero, one or two bisplitting operations followed by adding an edge. We shall consider two cases depending on the number of bisplitting operations --- either zero or at least one.

First, suppose that $G$ is obtained from $J$ by adding an edge; that is, $G=J + e$. Since $J$ is either a wheel or a prism, the reader may easily verify that if $|V(J)| \geq 8$ then $G$ contains a mixed bicycle;
by Lemma~\ref{lem:3-conn-mixed-bicycle-implies-K23-bisub}, $G$ is $K_{2,3}$-based; contradiction. 
On the other hand, if $|V(J)| = 6$, then $G$ is obtained from $W_5$ or $\overline{C_6}$ by adding the edge $e$;
in each case, one may observe that $K_{2,3}$ is a subgraph of $G$; contradiction.

Now suppose that $G$ is obtained from $J$ by one or two bisplitting operations followed by adding an edge. In particular, $J$ has a vertex of degree four or more. Ergo, $J$ is an odd wheel
of order six or more.
Furthermore, the first bisplitting operation must bisplit the hub~$h$ of~$J$, the reader may easily verify
that the resulting graph has a mixed bicycle.
By Lemma~\ref{lem:bisplitting-preserves-mixed-bicycle}, $G$ contains a mixed bicycle.
By Lemma~\ref{lem:3-conn-mixed-bicycle-implies-K23-bisub}, $G$ is $K_{2,3}$-based; contradiction. This completes the proof of Theorem~\ref{thm:simple-planar-K23free-bricks}.
\end{proof}

The above theorem, along with Corollary~\ref{summarize-norine-thomas-bricks}, yields the following.
\begin{cor}{\sc [Planar Cycle-Extendable Bricks]}\newline
\label{cor:Planar Cycle-Extendable Bricks}
A simple planar brick is \ce\ if and only if it is either a wheel or a prism. \qed
\end{cor}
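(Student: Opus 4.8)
The plan is to assemble this corollary directly from the characterization of planar $K_{2,3}$-free bricks (Theorem~\ref{thm:simple-planar-K23free-bricks}), together with the necessary condition for cycle-extendability in the planar setting (Lemma~\ref{lem:planar-ce-implies-K23free}) and the fact that wheels and prisms are \ce\ (Corollary~\ref{summarize-norine-thomas-bricks}). Since all the substantive work has already been carried out upstream, each of the two implications follows in essentially a single step.

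For the forward direction, I would begin with a simple planar brick $G$ that is \ce. Applying Lemma~\ref{lem:planar-ce-implies-K23free}, which asserts that every planar \ce~\mcg\ is $K_{2,3}$-free, I immediately conclude that $G$ is $K_{2,3}$-free. Now Theorem~\ref{thm:simple-planar-K23free-bricks} characterizes simple planar $K_{2,3}$-free bricks as precisely the wheels and the prisms, so $G$ must be one of these. For the reverse direction, I would take $G$ to be a wheel or a prism and invoke Corollary~\ref{summarize-norine-thomas-bricks} (equivalently, Propositions on wheels and on prisms being \ce) to conclude that $G$ is \ce.

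The main difficulty in this statement is not located in the present argument at all; it resides entirely in the proof of Theorem~\ref{thm:simple-planar-K23free-bricks}, which relies on the Norine--Thomas brick generation theorem, the base case supplied by the classification of planar $K_{2,3}$-free Norine--Thomas bricks, and the mixed-bicycle machinery of Lemma~\ref{lem:3-conn-mixed-bicycle-implies-K23-bisub}. Once that characterization and the necessary condition of Lemma~\ref{lem:planar-ce-implies-K23free} are in hand, the corollary is a routine combination of prior results and no further obstacle arises; this explains why the statement is flagged with \qed rather than accompanied by a detailed proof.
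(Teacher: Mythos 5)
Your proposal is correct and matches the paper's own (implicit) argument: the paper derives this corollary directly from Theorem~\ref{thm:simple-planar-K23free-bricks} together with Corollary~\ref{summarize-norine-thomas-bricks}, with Lemma~\ref{lem:planar-ce-implies-K23free} supplying the bridge from cycle-extendability to $K_{2,3}$-freeness in the forward direction, exactly as you describe. Your identification of where the real work lies (in the proof of Theorem~\ref{thm:simple-planar-K23free-bricks}) is also accurate.
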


We now proceed towards our final goal of characterizing planar \ce~{\mbox irreducible} graphs.

\section{Planar \ce~irreducible graphs}
\label{section-5}
In light of Proposition~\ref{prop:irreducible-reduction}, Corollary~\ref{DegreeThree-Graph} and Theorem~\ref{thm:simple-planar-K23free-bricks}, it remains to characterize those planar \ce~irreducible graphs that have a vertex of degree two; let $G$ be such a graph and let $x_0$ denote any vertex of degree two. In our inductive proof of the Main Theorem (\ref{thm:characterize-nonbipartite-ce-graphs}), we shall consider the smaller graph $J^{'}:=G/x_0$. Observe that $J^{'}$ is a planar \mcg. By Lemma~\ref{prp:cycle-extendability-inherited-through-tight-cuts}, $J^{'}$ is also \ce. However, $J^{'}$ need not be irreducible; in fact, $J'$ need not be simple. We now introduce the notion of an ``osculating bicycle''  that will help us in deducing that $J^{'}$ is either simple, or otherwise has exactly two multiple edges.

\subsection{Osculating bicycle}
\label{osculating-bicycle}

\indent A pair of cycles $(Q,Q^{'})$ in a graph $G$ is said to be an {\em osculating bicycle} if (i) they intersect in precisely one vertex, and (ii) they have the same parities. Furthermore, if each of them is an odd cycle then we refer to $(Q,Q^{'})$ as an {\em odd} osculating bicycle. Likewise, if each of them is an even cycle then $(Q,Q^{'})$ is an {\em even} osculating bicycle. 

We recall the following definition from Section~\ref{reduce to near-brick}. For a vertex $v$ of a graph $G$, a cycle~$C$ (in $G-v$) is $v$-isolating if $v$ is an isolated vertex in the graph $G-V(C)$. The next lemma provides an easy sufficient condition to deduce that a graph (obtained via bisplitting a vertex) is not \ce. We shall find this lemma immensely useful in our proof of the Main Theorem (\ref{thm:characterize-nonbipartite-ce-graphs}).
\begin{lem}{\sc [Osculating Bicycle Lemma]}\newline
\label{lem:osculating-bicycle}
    Let $G$ be a graph and $x_0$ denote a vertex of degree two that has two distinct neighbors. If $J^{'} := G/x_0$ has an  osculating bicycle $(Q,Q^{'})$ such that neither~$Q$ nor~$Q^{'}$ is a cycle in $G$, then $Q \cup Q^{'}$ is an $x_0$-isolating even cycle in $G$.
\end{lem}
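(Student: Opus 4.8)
The goal is to understand precisely how the osculating bicycle $(Q,Q')$ in $J' = G/x_0$ pulls back to a structure in $G$. The bicontraction $G/x_0$ contracts the two edges $x_0v_1$ and $x_0v_2$ (where $v_1,v_2$ are the two distinct neighbors of $x_0$) into a single vertex, call it $w$. The plan is to first pin down exactly which cycles of $J'$ fail to be cycles of $G$: these are precisely the cycles passing through $w$ that use both of the ``identified'' edges, i.e.\ those whose two edges at $w$ correspond in $G$ to one edge incident to $v_1$ and one edge incident to $v_2$. A cycle of $J'$ through $w$ that happens to use two edges both incident to the same $v_i$ in $G$ pulls back directly to a cycle of $G$ (just passing through $v_i$); so the hypothesis that neither $Q$ nor $Q'$ is a cycle in $G$ forces each of them to traverse $w$ ``across'' the contraction, from a $v_1$-edge to a $v_2$-edge.

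First I would establish the structural consequence of the hypothesis: since neither $Q$ nor $Q'$ is a cycle in $G$, each must pass through the contraction vertex $w$ and use edges of both types. Hence in $G$, the cycle $Q$ corresponds to a $v_1v_2$-path $R_Q$ (with endpoints $v_1$ and $v_2$), and likewise $Q'$ corresponds to a $v_1v_2$-path $R_{Q'}$. Because $(Q,Q')$ is an osculating bicycle, they meet in exactly one vertex, namely $w$ itself; therefore the paths $R_Q$ and $R_{Q'}$ are internally disjoint in $G$ and share only their endpoints $v_1$ and $v_2$. Consequently $R_Q \cup R_{Q'}$ is a cycle $C$ in $G$ passing through $v_1$ and $v_2$, and adding back the two edges $v_1x_0$ and $v_2x_0$ shows that $Q \cup Q' = R_Q \cup R_{Q'}$ together with $x_0$'s two incident edges forms a closed structure; more carefully, $Q \cup Q'$ as a subgraph of $G$ together with $x_0$ yields the cycle $C^+ := R_Q \cup R_{Q'} \cup \{v_1x_0, v_2x_0\}$.

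Next I would verify the two assertions of the conclusion. For the parity (that $Q\cup Q'$ is an \emph{even} cycle in $G$): the defining property of an osculating bicycle is that $Q$ and $Q'$ have the same parity. The contraction $G/x_0$ shortens each $v_1v_2$-path by one edge (two edges $v_1x_0,x_0v_2$ in $G$ become the two edges $v_1w,wv_2$\,---\,actually each path through $w$ in $J'$ has length equal to its preimage path in $G$ minus one, since the two contracted edges become incident at $w$). I would carefully track this length bookkeeping to confirm that the cycle $C^+$ obtained in $G$ is even. The cleanest route is to observe that $Q\cup Q'$ in $G$ is exactly the cycle through $x_0$ formed by the two $v_1v_2$-paths plus $x_0$; its length is $\ell(R_Q) + \ell(R_{Q'}) + 2$, and relating $\ell(R_Q), \ell(R_{Q'})$ to the lengths of $Q, Q'$ via the single contraction (each loses one edge) together with the equal-parity hypothesis gives evenness. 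For the isolating property, I would note that the vertex set of $C^+$ in $G$ is $V(R_Q) \cup V(R_{Q'}) \cup \{x_0\} = V(Q) \cup V(Q')$ (with $w$ replaced by $v_1,v_2$) together with $x_0$; since $x_0$ has degree exactly two and both its neighbors $v_1,v_2$ lie on the cycle, $x_0$ has no neighbor outside $V(C^+)$, so $x_0$ is isolated in $G - V(C^+)$, i.e.\ $C^+$ is $x_0$-isolating.

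\textbf{Main obstacle.} I expect the delicate point to be the parity and length bookkeeping across the bicontraction\,---\,specifically, confirming that ``neither $Q$ nor $Q'$ is a cycle in $G$'' rigidly forces each to cross the contraction (rather than, say, $Q$ revisiting $w$ or using a loop/parallel-edge artifact), and then that the single contraction uniformly changes each path length by one so that equal parity of $Q,Q'$ yields an even cycle $Q\cup Q'$ in $G$. The subtle case to rule out is when a cycle of $J'$ through $w$ pulls back to something that is \emph{not} a simple cycle in $G$; here the hypothesis ``not a cycle in $G$'' must be interpreted correctly, and I would argue that the only way a $J'$-cycle through $w$ fails to be a $G$-cycle is the genuine crossing case described above, making the pullback a simple $x_0$-isolating cycle.
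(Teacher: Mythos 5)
Your structural analysis of how $Q$ and $Q'$ pull back to $G$ is exactly the paper's argument: since neither is a cycle in $G$, each must pass through the contraction vertex and use one edge incident with $v_1$ and one incident with $v_2$ in $G$; since the two cycles meet in precisely one vertex and both contain the contraction vertex, the corresponding $v_1v_2$-paths $R_Q$ and $R_{Q'}$ are internally disjoint, so $R_Q \cup R_{Q'}$ is a cycle of $G$. Up to that point you are correct and in step with the paper.

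The final assembly, however, contains a genuine error. The subgraph of $G$ denoted $Q \cup Q'$ in the statement is precisely $R_Q \cup R_{Q'}$: every edge of $Q$ and of $Q'$ is already an edge of $G$ (the two contracted edges $x_0v_1$ and $x_0v_2$ do not appear in $J'$ at all), so $\ell(R_Q)=\ell(Q)$, \emph{not} $\ell(Q)-1$, and the cycle in question has length $\ell(Q)+\ell(Q')$, which is even because $Q$ and $Q'$ have the same parity, and it does not contain $x_0$. Your object $C^{+}:=R_Q\cup R_{Q'}\cup\{v_1x_0,v_2x_0\}$ is not a cycle at all: $v_1$ and $v_2$ each acquire degree three in it (it is a theta graph). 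Your parity count comes out even only because two errors cancel (subtracting one from each path length, then adding two for the $x_0$-edges). More importantly, a $v$-isolating cycle is by definition a cycle of $G-v$, so it cannot contain $x_0$; asserting that $x_0$ is isolated in $G-V(C^{+})$ when $x_0\in V(C^{+})$ is vacuous. The correct conclusion is that $R_Q\cup R_{Q'}$ avoids $x_0$ while containing both of its neighbours, whence $x_0$ is isolated in $G-V(R_Q\cup R_{Q'})$. As a sanity check against the paper's own use of the lemma: when $Q$ and $Q'$ are two $2$-cycles formed by parallel edges at the contraction vertex, the lemma must produce an $x_0$-isolating $4$-cycle in the simple graph $G$; your bookkeeping would instead yield a $2$-cycle, i.e.\ a pair of parallel edges, contradicting simplicity.
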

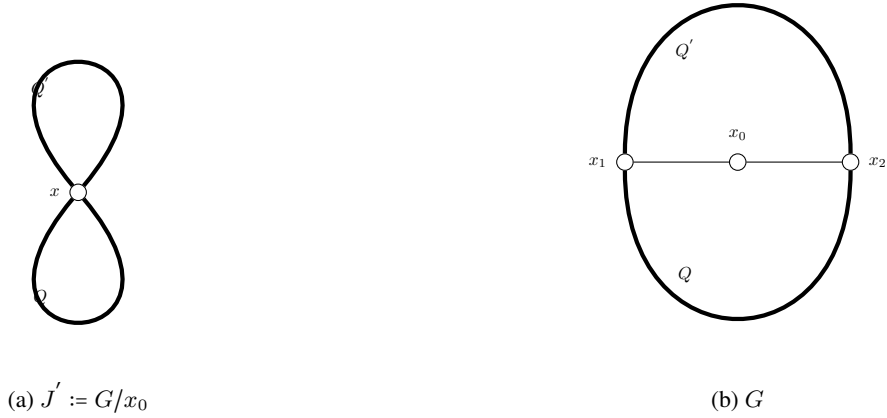
\begin{figure}[!htb]
    \centering
    \begin{subfigure}{0.5\textwidth}
            \centering
                \begin{tikzpicture}[every node/.style={draw=black, circle, scale=0.66}, scale=0.5]
    	        \node[label=left:{$x$}] (a2) at (0,0){} ;  
                    \node [draw=none] (a4) at (-1,-2.8){$Q$};
                    \node [draw=none] (a5) at (-1,2.8){$Q^{'}$};
                    \draw [in=230, out=310, looseness=50.25,ultra thick] (a2) to (a2);
                    \draw [in=130, out=50, looseness=50.25,ultra thick] (a2) to (a2);
    	
                \end{tikzpicture}
             
         \caption{$J^{'}:=G/x_0$}
        \label{fig:J=G/x_0}
    \end{subfigure}%
    \hfill
    \begin{subfigure}{0.3\textwidth}
        \centering
            \begin{tikzpicture}[every node/.style={draw=black, circle, scale=0.66}, scale=0.5]
                \node[label=above:{$x_0$}] (a1) at (3,0){} ;  
                \node[label=left:{$x_1$}] (a2) at (0,0){} ;  
                \node[label=right:{$x_2$}] (a3) at (6,0){} ;  
                \node [draw=none] (a4) at (1.6,3){$Q^{'}$};
                \node [draw=none] (a5) at (1.6,-3){$Q$};
                \draw (a3) -- (a1) -- (a2);
                \draw [in=90, out=90, looseness=2.25,ultra thick] (a3) to (a2);
                \draw [in=270, out=270, looseness=2.25,ultra thick] (a3) to (a2);
         \end{tikzpicture}

        \caption{$G$}
        \label{fig:graphG}
     \end{subfigure}
     \hfill
     \caption{an illustration for the Osculating Bicycle Lemma and its proof}
     \label{fig:osculatingbicyclelemma}
 \end{figure}
\begin{proof}
We let $x_1$ and $x_2$ denote the two distinct neighbors of $x_0$ in $G$, and let $x$ denote the contraction vertex in $J^{'}:=G/x_0$. Suppose that $(Q,Q^{'})$ is an osculating bicycle in $J^{'}:=G/x_0$ such that neither~$Q$ nor~$Q^{'}$ is a cycle in $G$. Since $Q$ is not a cycle in $G$, the vertex $x$ belongs to $Q$, and one of the two edges of $Q \cap \partial_{J^{'}}(x)$ is incident with $x_1$ in $G$ and the other edge is incident with $x_2$ in $G$; see Figure~\ref{fig:osculatingbicyclelemma}. An analogous statement holds for $Q^{'}$. Since $(Q,Q^{'})$ is an osculating bicycle in $J^{'}$, we infer that $Q$ and $Q^{'}$ meet precisely at the vertex $x$ in $J^{'}$. This fact, combined with the fact that their parities are the same, implies that $Q \cup Q^{'}$ is an $x_0$-isolating even cycle in $G$, as shown in Figure~\ref{fig:graphG}.
\end{proof}

For distinct vertices $u$ and $v$ of a graph $G$, we let $\mu_G(u,v)$ denote the number of edges joining $u$ and $v$. We shall find this notation useful in the proof of the following corollary of the above lemma.

\begin{cor}
\label{cor:J is 2-stable and parallel edge}
    Let $G$ be a simple \ce\ graph, $x_0$ be a vertex of degree two and let~$J^{'} := G/x_0$. Then either $J^{'}$ is simple or otherwise $J^{'}$ has precisely two multiple edges. Furthermore, if $G$ is irreducible then the underlying simple graph $J$ of $J^{'}$ is also irreducible.
\end{cor}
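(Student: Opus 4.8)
The plan is to analyze the contraction $J':=G/x_0$ by understanding exactly when multiplicities can arise, using the Osculating Bicycle Lemma as the key tool. Let me set up notation: write $x_1$ and $x_2$ for the two neighbors of $x_0$ in $G$, and let $x$ denote the contraction vertex in $J'$. Since $G$ is simple and $x_0$ has degree two with distinct neighbors, contracting $x_0$ (i.e.\ identifying the path $x_1 x_0 x_2$ to the single vertex $x$) can only create multiple edges \emph{at $x$}: two parallel edges between $x$ and some vertex $w$ arise precisely when $w$ is adjacent to both $x_1$ and $x_2$ in $G$. So my first step is to observe that every multiple edge of $J'$ is incident with $x$, and that $\mu_{J'}(x,w)\le 2$ for every $w$ (since $G$ is simple, $w$ contributes at most one edge to $x_1$ and at most one to $x_2$).

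The crux is bounding the \emph{number} of vertices $w$ with $\mu_{J'}(x,w)=2$; I want to show there is at most one such $w$, which would give at most two multiple edges. Suppose for contradiction there are two distinct vertices $w$ and $w'$ each joined to $x$ by a pair of parallel edges. Each such pair, say the two edges between $x$ and $w$, forms a $2$-cycle (digon) $Q_w$ in $J'$ that is \emph{not} a cycle in $G$ (in $G$ it is the even $4$-cycle $x_1 w x_2 x_0$ traced back, or rather its image is a path through $x_1, w, x_2$). The point is that $Q_w$ and $Q_{w'}$ are two cycles meeting only at $x$; I would argue $(Q_w,Q_{w'})$ is an osculating bicycle: they intersect in precisely the single vertex $x$ (since $w\ne w'$), and both are even (digons have length two). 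Neither is a cycle in $G$ because each traverses both edges incident to $x$ that came from the two distinct neighbors $x_1,x_2$. By the Osculating Bicycle Lemma (\ref{lem:osculating-bicycle}), $Q_w\cup Q_{w'}$ is then an $x_0$-isolating even cycle in $G$. But an $x_0$-isolating even cycle $C$ leaves $x_0$ isolated in $G-V(C)$, so $G-V(C)$ has no perfect matching, meaning $C$ is a non-conformal even cycle---contradicting that $G$ is \ce. This proves $J'$ has at most one vertex joined to $x$ by parallel edges, hence at most two multiple edges, establishing the first assertion.

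For the final assertion, assume $G$ is irreducible and let $J$ be the underlying simple graph of $J'$; I must show the degree-two vertices of $J$ form a stable set. The degree-two vertices of $J$ are of two kinds: those inherited from $G$ (degree-two vertices of $G$ other than $x_0$, possibly with degree altered near $x$) and the possibly-new vertex $x$. I would first note $d_{J'}(x)=d_G(x_1)+d_G(x_2)-2$, and that passing to the simple graph $J$ can only lower the degree of $x$ by at most one (removing the single redundant parallel edge identified above). The natural worry is whether $x$ could become a degree-two vertex adjacent to another degree-two vertex; I expect this to be the main obstacle, and I would handle it by using irreducibility of $G$ together with the structure forced on neighbors of $x_0$. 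Concretely, any two degree-two vertices of $J$ that are adjacent would pull back to a configuration in $G$ violating the $2$-stability of $G$'s degree-two set (or would force $x$ to have small degree, again constrainable via $d_G(x_1),d_G(x_2)\ge 2$ and the fact that $x_0$'s neighbors are distinct). A short case analysis---separating whether the adjacency in $J$ involves $x$ or not, and translating each back through the bicontraction---should close this, since every edge of $J$ corresponds to an edge of $G$ and the only vertex whose local structure genuinely changes is $x$.
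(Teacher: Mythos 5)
Your proof of the first assertion is correct and is essentially the paper's argument: rule out $\mu_{J'}(x,w)\ge 3$ by simplicity of $G$, and rule out two distinct vertices at multiplicity two by pairing the two digons into an even osculating bicycle and invoking Lemma~\ref{lem:osculating-bicycle} to produce a non-conformal $4$-cycle.

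The second assertion, however, has a genuine gap, and you have misidentified where the difficulty lies. The contraction vertex $x$ cannot become a degree-two vertex of $J$: since $G$ is irreducible and $x_0$ has degree two, its neighbours satisfy $d_G(x_1),d_G(x_2)\ge 3$, so $d_{J'}(x)\ge 4$ and $d_J(x)\ge 3$ even after deleting one parallel edge. The real obstruction is the \emph{other} end $u$ of the parallel pair: deleting $e'$ drops $d(u)$ from $d_G(u)$ to $d_G(u)-1$, so if $d_G(u)=3$ then $u$ becomes a degree-two vertex of $J$, and its second neighbour may already be a degree-two vertex of $G$ (hence of $J$). This configuration does \emph{not} pull back to a violation of $2$-stability in $G$, because $u$ has degree three there; so your proposed resolution (``any two adjacent degree-two vertices of $J$ would violate the stability of $G$'s degree-two set'') fails precisely in the only case that matters. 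The paper closes this case with a further application of the machinery: choose $f\in\partial_G(x_1)-e-x_1x_0$ and $f'\in\partial_G(x_2)-e'-x_2x_0$, use Little's theorem to find a conformal (hence even) cycle $Q$ of the matching covered graph $J$ through $f$ and $f'$, observe that $Q$ avoids $u$ (a cycle through $f,f'$ and the edge $ux$ would use three edges at $x$), and then $(Q,\,Q':=ee')$ is an even osculating bicycle of $J'$ with neither constituent a cycle of $G$; Lemma~\ref{lem:osculating-bicycle} then yields an $x_0$-isolating even cycle of $G$, contradicting cycle-extendability. Without some argument of this kind your ``short case analysis'' does not close.
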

\begin{proof}
    Observe that, since $G$ is simple, all multiple edges in $J^{'}$ (if any) are incident with the contraction vertex that we denote by $x$.
    
    To prove the first part, suppose to the contrary that $J^{'}$ has more than two multiple edges; we consider two cases. First suppose that there exists a vertex $u \in V(J^{'})-x$ such that $\mu_{J^{'}}(x,u) \geq 3$. Since $J^{'}=G/x_0$, it follows from the pigeonhole principle that $G$ is not simple; contradiction. Otherwise, there exist distinct $u_1,u_2 \in V(J^{'})-x$ such that $\mu_{J^{'}}(x,u_1) = \mu_{J^{'}}(x,u_2) = 2$. Let $e$ and $e^{'}$ denote distinct edges joining $u_1$ and $x$; likewise, let $f$ and $f^{'}$ denote distinct edges joining $u_2$ and $x$. Observe that $(Q:=ee^{'},Q^{'}:=ff^{'})$ is an osculating bicycle in $J^{'}$ and neither $Q$ nor $Q^{'}$ is a cycle in the simple graph $G$. By the Osculating Bicycle Lemma~(\ref{lem:osculating-bicycle}), $Q \cup Q^{'}$ is an $x_0$-isolating $4$-cycle in $G$; this contradicts the hypothesis that $G$ is \ce. This proves the first part.

    Now suppose that $G$ is irreducible and let $x_1$ and $x_2$ denote the neighbors of $x_0$; thus, $d_G(x_1) \geq 3$ and $d_G(x_2) \geq 3$. Consequently, $d_{J^{'}}(x) \geq 4$ and degree two vertices of $J^{'}$ comprise a stable set. If $J^{'} = J$ then we are done. Otherwise, by the first part, there exists a unique vertex $u \in V(J^{'}) - x$ such that  $\mu_{J^{'}}(x,u) = 2$; let $e$~and~$e^{'}$ denote the two edges joining $x$ and $u$. Adjust notation so that $J= J^{'} - e^{'}$. Since $G$ is simple, adjust notation so that $e \in \partial_G(x_1)$ and $e^{'} \in \partial_G(x_2)$. Suppose to the contrary that, unlike $J^{'}$, the vertices of degree two of $J$ do not comprise a stable set. Since $d_{J}(x) \geq 3$, we infer that $d_{J}(u) = 2$ and that the neighbour of $u$, distinct from $x$, also has degree two in $J$. We choose $f \in \partial_G(x_1)-e-x_1x_0$ and $f^{'} \in \partial_G(x_2)-e^{'}-x_2x_0$. Since $f,f^{'} \in \partial_J(x)$, we let $Q$ denote a conformal cycle containing $f$ and $f^{'}$ in $J$. Since $d_J(u) = 2$ and $e=ux$, we infer that $u\notin V(Q)$. Thus $(Q,Q^{'}:=ee^{'})$ is an osculating bicycle in $J^{'}$ and neither $Q$ nor $Q^{'}$ is a cycle in $G$. By the Osculating Bicycle Lemma~(\ref{lem:osculating-bicycle}), $Q \cup Q^{'}$ is an $x_0$-isolating cycle in $G$. This contradicts the hypothesis that $G$ is \ce, and proves the second part.
\end{proof}

We now proceed to describe an important class of bipartite \ce~graphs that will play an important role in our description as well as appreciation of planar \ce~irreducible~graphs.

\subsection{Half biwheels}
 A {\em\bw} is any graph $H$ obtained from an even path $P$[$A,B$] with ends, say $u$ and $v$ in $A$, by introducing a new vertex, say $h$, and joining $h$ with each vertex in $A$. Figure~\ref{ear-decom-K_33} shows \bw s of orders six and eight. The ends of $P$ are called the {\em corners}, and $h$ is called the {\em hub}, of $H$. Observe that any \bw\ is a \ce\ bipartite \mcg. Also, we allow $P$ to be $K_1$; in this case, $H$ is the smallest \bw~$K_2$, either vertex may be regarded as the hub $h$ and the other one as the (only) corner $u=v$. Note that $C_4$ is the second smallest \bw; any vertex may be regarded as the hub $h$ and its two distinct neighbors as the corners $u$~and~$v$. In all other cases, the hub and (distinct) corners are uniquely determined. The reader may easily verify the following.

\begin{prop}
\label{prop:corner-hub-path-conformal}
    Every \bw~$H$ is bipartite and \ce. Furthermore, each path of $H$, starting at the hub and ending at a corner, is conformal.\qed
\end{prop}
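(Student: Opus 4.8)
The plan is to exploit the rigid structure of a \bw. Write the underlying even path as $P = a_0 b_1 a_1 b_2 \cdots b_k a_k$, where $A = \{a_0, \ldots, a_k\}$ and $B = \{b_1, \ldots, b_k\}$ are the two color classes of $P$, the corners are $u = a_0$ and $v = a_k$, and the hub $h$ is joined to every vertex of $A$. Since $h$ is adjacent only to vertices of $A$, the graph $H$ is bipartite with color classes $A$ and $B \cup \{h\}$, and these two classes both have cardinality $k+1$; in particular $H$ is balanced, as any bipartite graph with a \pema\ must be. This settles bipartiteness and sets up the bookkeeping for the rest.

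The key structural observation I would make is that every cycle of $H$ passes through the hub $h$: deleting $h$ leaves the path $P$, which is acyclic, so any cycle must use two of the hub edges $h a_i$ and $h a_j$ (say $i < j$) together with the unique $a_i$-$a_j$ subpath of $P$. Hence every cycle has the form $C_{ij} = h\, a_i\, b_{i+1}\, a_{i+1} \cdots b_j\, a_j\, h$, and each such cycle is even (its length is $2(j-i)+2$), as expected from bipartiteness. I would first record that $H$ is matching covered by exhibiting, for each edge, a \pema\ containing it: the matching $\{h a_0\} \cup \{b_\ell a_\ell : 1 \le \ell \le k\}$ contains $h a_0$ and every edge $b_\ell a_\ell$, the matching $\{h a_k\} \cup \{a_{\ell-1} b_\ell : 1 \le \ell \le k\}$ contains every edge $a_{\ell-1} b_\ell$, and matching $h$ to an arbitrary $a_j$ extends to a \pema\ because deleting $a_j$ from $P$ splits it into two even subpaths.

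For cycle-extendability it then suffices to check that each $C_{ij}$ is conformal. Deleting $V(C_{ij})$ from $H$ removes $h$ together with $a_i, \ldots, a_j$ and $b_{i+1}, \ldots, b_j$; since $h$ is gone, the remaining graph consists only of path edges and breaks into the two subpaths $a_0 b_1 \cdots a_{i-1} b_i$ and $b_{j+1} a_{j+1} \cdots b_k a_k$, each having an even number of vertices and hence a \pema. Thus $H - V(C_{ij})$ is matchable, so $C_{ij}$ is conformal and $H$ is \ce. For the ``furthermore'', I would observe that a path from $h$ to the corner $a_0$ enters $P$ at some $a_j$ and, being a simple path that cannot revisit $h$, must run monotonically to $a_0$, giving $P' = h\, a_j\, b_j\, a_{j-1} \cdots b_1\, a_0$ (and symmetrically for the corner $a_k$); deleting $V(P')$ again destroys the hub and leaves the single even subpath $b_{j+1} a_{j+1} \cdots b_k a_k$, which is matchable, so $P'$ is conformal.

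The argument is essentially a direct inspection, so I do not anticipate a genuine obstacle. The one point that does the real work is the observation that deleting the hub disconnects every potential cycle; this pins down the cycles (and the hub-to-corner paths) completely and reduces every conformality check to the triviality that a path on an even number of vertices has a \pema.
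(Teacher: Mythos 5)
Your proof is correct and complete. The paper states this proposition with a \qed{} (leaving the verification to the reader), and your argument is precisely the intended direct inspection: the observation that $H-h$ is the acyclic path $P$, which forces every cycle and every hub-to-corner path into the explicit form you describe, after which each conformality check reduces to matching a path on an even number of vertices.
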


In each of the following four sections, we describe a family of nonbipartite planar \ce\ irreducible graphs $\mathcal{G}_0,\mathcal{G}_1,\mathcal{G}_2$ and $\mathcal{G}_3$. Each member of these families is constructed from $k \geq 1$ \bw (s) --- by perhaps identifying all of the hubs (only in the case of $\mathcal{G}_1$) --- and introducing additional vertices (only in the case of $\mathcal{G}_3$) as well as edges. For the sake of brevity, we shall find it convenient to use arithmetic modulo $k$. The reader may verify that these families are pairwise disjoint.

Additionally, the reader may verify that each member of these families is a subdivision of a $3$-connected planar graph. Hence, by Whitney's Theorem \cite[Theorem 10.28]{bomu08}, it admits a unique planar embedding. We will provide planar embeddings of certain small members of these families. Some of our propositions shall refer to the cyclic ordering of edges (incident at a common vertex) induced by the planar embedding.

For each family, we will state propositions pertaining to the existence of specific even cycles and osculating bicycles. We shall find these very useful in our proof of the Main Theorem in order to invoke the Osculating Bicycle Lemma (\ref{lem:osculating-bicycle}) and thus significantly reduce the amount of case analysis required. In particular, propositions pertaining to even cycles shall be useful in the case where $x$ is a cubic vertex of $J$, as per the notation in the proof of Lemma~\ref{lem:osculating-bicycle} and Figure~\ref{fig:osculatingbicyclelemma}, whereas propositions pertaining to osculating bicycles shall be invoked in the case where $x$ has degree four or more in $J$.

\subsection{Generalized prisms \texorpdfstring{$\mathcal{G}_{0}$}{}}
In this section, we introduce our first family of graphs, {\em generalized prisms}, denoted by~$\mathcal{G}_0$. Each member of $\mathcal{G}_0$, say $J$, is constructed from the union of $k$ disjoint \bw s, say $H_i$[$A_i,B_i$] for $i \in \{0,1,\ldots,k-1\}$, where $k$ is odd and at least three, each labeled as follows --- if the hub $h_i$ belongs to $A_i$ then label $h_i$ as $w_i=x_i$ and the corners as $y_i$ and $z_i$, and if the hub $h_i$ belongs to $B_i$ then label $h_i$ as $y_i=z_i$ and the corners as $w_i$ and $x_i$ --- by adding the following edges: $\alpha_i:=x_iw_{i+1}$ and $\beta_i:=y_iz_{i+1}$ for each $0 \leq i \leq k-1$. Figure~\ref{fig:a member of G_0} depicts an example where the \bw s~$H_i$ are shown in blue.

\begin{figure}[!htb]
        \begin{tikzpicture}[every node/.style={draw=black, circle,scale=0.5}, scale=0.4]
          
            \node[label=below:{\Large $z_0$}] (a2) at (1.5,4) {};
            \node[label=below:{$ $}] (a3) at (3,4) {};
            \node[label=below:{$ $}] (a4) at (4.5,4) {};
            \node[label=below:{$ $}] (a5) at (6,4) {};
            \node[label=below:{\Large $y_0$}] (a8) at (7.5,4) {};
            \node[label=above:{\Large $w_0=x_0$}] (a11) at (4.5,8) {};
            \draw[blue] (a2) -- (a3) -- (a4);
            \draw[blue] (a4) -- (a5) -- (a8);
            \draw[blue] (a11) -- (a2);
            \draw[blue] (a11) -- (a4);
            \draw[blue] (a11) -- (a8);

            \node[label=above:{\Large $w_1$}] (a21) at (10,8) {};
            \node[label=above:{}] (a22) at (11.5,8) {};
            \node[label=above:{\Large $x_1$}] (a23) at (13,8) {};
            \node[label=below:{\Large $y_1=z_1$}] (a24) at (11.5,4) {};
            \draw[blue] (a21) -- (a22) -- (a23) -- (a24) -- (a21);

            \node[label=below:{\Large $y_2=z_2$}] (a31) at (15.5,4) {};
            \node[label=above:{\Large $w_2=x_2$}] (a32) at (15.5,8) {};
           \draw[blue] (a31)--(a32);

            \node[label=above:{\Large $w_3$}] (a42) at (18,8) {};
            \node[label=below:{$ $}] (a43) at (19.5,8) {};
            \node[label=below:{$ $}] (a44) at (21,8) {};
            \node[label=below:{}] (a45) at (22.5,8) {};
            \node[label=above:{\Large $x_3$}] (a48) at (24,8) {};
            \node[label=below:{\Large $y_3=z_3$}] (a411) at (21,4) {};
            \draw[blue] (a42) -- (a43) -- (a44);
            \draw[blue] (a44) -- (a45) -- (a48);
            \draw[blue] (a411) -- (a42);
            \draw[blue] (a411) -- (a44);
            \draw[blue] (a411) -- (a48);

              \node[label=below:{\Large $y_4=z_4$}] (a51) at (26.5,4) {};
            \node[label=above:{\Large $w_4=x_4$}] (a52) at (26.5,8) {};
         
           \draw[blue] (a51)--(a52);

           \draw[ thick] (a8)  -- (a24);
           \draw[ thick] (a11)  -- (a21);

           \draw[ thick] (a24) -- (a31);
           \draw[ thick] (a31) -- (a411);
           \draw[ thick] (a411) -- (a51);
           \draw[ thick] (a51) to[in=205,out=335] (a2);

           \draw[ thick] (a23) -- (a32);
           \draw[ thick] (a32) -- (a42);
           \draw[ thick] (a48) -- (a52);
           \draw[ thick] (a52) to[in=150,out=30,looseness=1.1] (a11);
           
    \end{tikzpicture}
    \caption{a generalized prism}
    \label{fig:a member of G_0}
\end{figure}
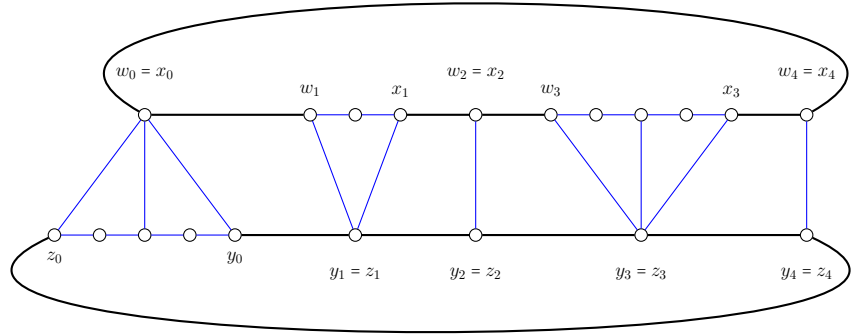

Each member $J$ of $\mathcal{G}_0$ is a near-bipartite planar (matching covered) graph. Furthermore, $R_i:=\{\alpha_i,\beta_i\}$ denotes a removable doubleton for each $0\leq i \leq k-1$, and these are the only removable doubletons. The components of $J-R_0-R_1-\ldots -R_{k-1}$ are precisely the \bw s $H_0,H_1,\ldots,H_k$. Observe that if each \bw\ $H_i$ is isomorphic to $K_2$ then $J$ is simply a prism. In this sense, the family $\mathcal{G}_0$ indeed generalizes 
the prisms.  In order to prove that members of $\mathcal{G}_0$ are \ce, we shall find the following lemma useful; the reader may prove it easily.

\begin{lem}
\label{lem:conformality-intrasubgraph}
    Let $J$ be a graph, let $(V_0,V_1,\ldots,V_{k-1})$ denote a partition of $V(J)$, let $H_i$ denote the induced subgraph $J[V_i]$ for each $0 \leq i \leq k-1$, and let $L$ denote a subgraph of $J$. If $L_i:=L \cap H_i$ is a conformal subgraph of $H_i$ for each $0 \leq i \leq k-1$, then $L$~is a conformal subgraph of $J$. \qed
\end{lem}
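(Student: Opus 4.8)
The plan is to produce a \pema\ of $J - V(L)$ explicitly, by taking the union of the perfect matchings guaranteed by the conformality hypothesis on each part. Recall that, by definition, the assertion ``$L_i$ is a conformal subgraph of $H_i$'' means precisely that $H_i - V(L_i)$ is matchable; so I would fix, for each $i \in \{0,1,\ldots,k-1\}$, a \pema\ $M_i$ of $H_i - V(L_i)$, and then claim that $M := \bigcup_{i} M_i$ is a \pema\ of $J - V(L)$.

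The first step is the routine bookkeeping on vertex sets. Since $H_i = J[V_i]$, we have $V(L_i) = V(L) \cap V_i$ and $H_i - V(L_i) = J[V_i \setminus V(L_i)]$. Because $(V_0, \ldots, V_{k-1})$ is a partition of $V(J)$, the sets $V(L_i)$ partition $V(L)$, and hence the complementary sets $V_i \setminus V(L_i)$ partition $V(J) \setminus V(L)$. Consequently each edge of $M_i$ is an edge of $J$ having both ends in $V_i \setminus V(L_i) \subseteq V(J) \setminus V(L)$, so it is an edge of $J - V(L) = J[V(J) \setminus V(L)]$.

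The second step is to verify that $M$ is indeed a \pema\ of $J - V(L)$. The matchings $M_i$ are supported on the pairwise-disjoint vertex sets $V_i \setminus V(L_i)$, so their union $M$ is a matching; and since each $M_i$ saturates the whole of $V_i \setminus V(L_i)$, the union $M$ saturates $\bigcup_{i} (V_i \setminus V(L_i)) = V(J) \setminus V(L)$. Thus $M$ is a \pema\ of $J - V(L)$, which shows that $J - V(L)$ is matchable and so $L$ is conformal in $J$.

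There is no real obstacle here; the content is entirely in the definitional unwinding of the first step, namely the identities $V(L_i) = V(L) \cap V_i$ and $H_i - V(L_i) = J[V_i \setminus V(L_i)]$, both of which are immediate from $H_i$ being the induced subgraph $J[V_i]$ together with the partition hypothesis. It is worth emphasizing that the edges of $J$ running between distinct parts $V_i$ and $V_j$ play no role at all: the union of the within-part matchings already saturates every vertex of $J - V(L)$, which is exactly why the hypothesis needs to speak only of the induced subgraphs $H_i$ rather than of arbitrary subgraphs.
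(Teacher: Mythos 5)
Your proof is correct; the paper states this lemma with a \qed{} and explicitly leaves it as an easy exercise for the reader, and your argument --- taking the union of the perfect matchings $M_i$ of $H_i - V(L_i)$ over the parts of the partition and checking that it saturates exactly $V(J)\setminus V(L)$ --- is precisely the intended routine verification. Nothing is missing.
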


We are now ready to prove that generalized prisms are \ce.

\begin{prop}
\label{prop:G_0-ce}
     Each member of $ \mathcal{G}_{0}$ is \ce.
\end{prop}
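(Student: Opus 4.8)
The plan is to show that every even cycle $C$ of a generalized prism $J \in \mathcal{G}_0$ is conformal, by exhibiting a perfect matching of $J - V(C)$. The structural ingredients I would rely on are: (i) the decomposition of $J$ into $k$ \bw s $H_0, \dots, H_{k-1}$ connected by the removable doubletons $R_i = \{\alpha_i, \beta_i\}$ with $\alpha_i = x_i w_{i+1}$ and $\beta_i = y_i z_{i+1}$; (ii) Lemma~\ref{lem:4.4}, which tells us that for each removable doubleton, $|C \cap R_i| \in \{0,2\}$; and (iii) Lemma~\ref{lem:conformality-intrasubgraph}, which reduces conformality of $C$ in $J$ to conformality of each restriction $C \cap H_i$ within the corresponding \bw\ $H_i$.

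Let me think about how these fit together. The key observation should be that the intersection of $C$ with each \bw\ $H_i$ is a well-behaved subgraph. Since the only edges joining distinct \bw s are the doubleton edges $\alpha_i, \beta_i$, and since $C$ is a cycle, $C$ enters and leaves each $H_i$ through these connecting edges. Applying Lemma~\ref{lem:4.4} to each $R_i$, the cycle $C$ uses either both or neither of $\alpha_i, \beta_i$. I expect the situation to be: $C \cap H_i$ is either empty, all of a single corner-to-hub path (so a conformal path in $H_i$ by Proposition~\ref{prop:corner-hub-path-conformal}), a union of two such paths forming a conformal subgraph, or perhaps $C$ does not meet $H_i$ at all. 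Because each $H_i$ is a \bw\ and hence bipartite and \ce\ (Proposition~\ref{prop:corner-hub-path-conformal}), each such restriction $C \cap H_i$ is a conformal subgraph of $H_i$.

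The concrete steps, in order, would be: \textbf{(1)} Observe that any even cycle $C$ meets the doubletons in a controlled way—by Lemma~\ref{lem:4.4}, for each $i$, $|C \cap R_i| \in \{0,2\}$; deduce how $C$ can traverse the \bw s. \textbf{(2)} Show that for each $i$, the trace $L_i := C \cap H_i$ is a conformal subgraph of $H_i$. Here the work is to enumerate the few possibilities for how $C$ passes through an $H_i$ (entering/leaving via $\alpha$-edges at the $x$/$w$ corners and $\beta$-edges at the $y$/$z$ corners), and in each case invoke Proposition~\ref{prop:corner-hub-path-conformal} (a hub-to-corner path is conformal) possibly together with the fact that the empty subgraph and a perfect matching of all of $H_i$ are trivially conformal. \textbf{(3)} Apply Lemma~\ref{lem:conformality-intrasubgraph} with the partition $V(J) = \bigcup_i V(H_i)$ to conclude that $C$ itself is conformal in $J$. \textbf{(4)} Since $C$ was an arbitrary even cycle, $J$ is \ce.

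\textbf{The main obstacle} I anticipate is step~(2): carefully verifying that the trace $L_i = C \cap H_i$ is always conformal in $H_i$. The subtlety is that $C$ restricted to $H_i$ need not be a single path—it could be two vertex-disjoint paths (when $C$ enters and leaves $H_i$ twice), and one must check that such a union is still conformal, which hinges on the parity bookkeeping forced by Lemma~\ref{lem:4.4} and the bipartite structure (the corners $w_i, x_i$ lie in one color class and $y_i, z_i$ govern the other). I would handle this by a short case analysis on whether $C$ uses $\alpha_i$, $\beta_i$, $\alpha_{i-1}$, $\beta_{i-1}$, noting that the endpoints of these edges in $H_i$ are precisely the corners, and that a path between two corners of a \bw\ (or between a corner and the hub) is conformal by Proposition~\ref{prop:corner-hub-path-conformal}; combining two vertex-disjoint conformal paths whose complements each have a perfect matching still leaves a perfect matching overall. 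Once each $L_i$ is confirmed conformal, Lemma~\ref{lem:conformality-intrasubgraph} closes the argument immediately.
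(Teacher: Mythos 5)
Your proposal follows essentially the same route as the paper's proof: fix an even cycle $C$, use Lemma~\ref{lem:4.4} to control $C\cap\partial(V(H_i))$ (which equals $\emptyset$, $R_{i-1}$, $R_i$, or $R_{i-1}\cup R_i$), check that each trace $C\cap H_i$ is conformal in $H_i$ via Proposition~\ref{prop:corner-hub-path-conformal}, and conclude with Lemma~\ref{lem:conformality-intrasubgraph}. The case analysis you anticipate in step~(2) is exactly the one the paper carries out, so the plan is sound as stated.
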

\begin{proof}
    Let $J\in \mathcal{G}_0$ and let $R_0,R_1,\ldots,R_{k-1}$ denote its removable doubletons. Let $H_0,H_1,\ldots,H_{k-1}$ denote the (\bw) components of $J-R_0-R_1-\ldots-R_{k-1}$; adjust notation so that $\partial(V(H_i)) = R_{i-1} \cup R_i$ and let $V_i:=V(H_i)$ for each $0\leq i \leq k-1$. 
    
    Let $C$ denote an even cycle of $J$. Let us pick $i \in \{0,1,\ldots,k-1\}$, and make some observations.  By Lemma~\ref{lem:4.4}, $|C \cap R_i| \in \{0,2\}$. Since $\partial(V(H_i)) = R_{i-1} \cup R_i$, we infer that $C \cap \partial(V(H_i)) \in \{\emptyset,R_{i-1},R_i,R_{i-1}\cup R_{i}\}$. Let $C_i := C \cap H_i$. Note that $C \cap \partial(V(H_i)) =\emptyset$  if and only if either $C_i = C$ or $C_i$ is the null subgraph of $H_i$. Secondly, $C \cap \partial(V(H_i)) \in\{R_{i-1},R_i\}$  if and only if $C_i$ is a path of the \bw~$H_i$ that starts at the hub and ends at a corner. Lastly, $C \cap \partial(V(H_i)) = R_{i-1} \cup R_i$ if and only if $C_i$ is a spanning subgraph of $H_i$. By Proposition~\ref{prop:corner-hub-path-conformal}, $C_i$~is a conformal subgraph of $H_i$. It follows immediately from Lemma~\ref{lem:conformality-intrasubgraph}, with $C$ playing the role of $L$, that $C$ is a conformal cycle of $J$. Thus $J$ is \ce.
\end{proof}

The following property of generalized prisms, pertaining to the existence of even cycles containing a specified cubic vertex but not containing a specified neighbor, is easily verified.

\begin{prop}{\sc [Even Cycles in Generalized Prisms]}\newline
\label{prop:G_0_cubic_vertex}
    Let $J \in \mathcal{G}_0$ and let $R_0,R_1,\ldots,R_{k-1}$ denote its removable doubletons.
    Let $x$ denote a cubic vertex and let $H$ denote the (\bw) component of $J-R_0-R_1-\ldots -R_{k-1}$ that contains~$x$. Let $w$ denote any neighbor of $x$ in~$J$ so that (i) either $w \notin V(H)$ or (ii)~$w \in V(H)$ and $d_{J}(w)=2$. Then $J-w$ has an even cycle that contains $x$. 
\end{prop}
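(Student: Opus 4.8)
The plan is to use the completely explicit structure of $J\in\mathcal{G}_0$: it is obtained from the disjoint half biwheels $H_0,\dots,H_{k-1}$ (which are exactly the components of $J-R_0-\cdots-R_{k-1}$) by adding the doubleton edges $\alpha_i=x_iw_{i+1}$ and $\beta_i=y_iz_{i+1}$. Since $x$ is cubic and a cycle through $x$ uses exactly two of its three incident edges, an even cycle through $x$ avoiding $w$ must use precisely the two edges at $x$ other than $xw$; so the whole task is to exhibit one such cycle. I would first classify the cubic vertices of $J$ lying in $H=H_i$. Writing $H$ as a half biwheel on the even path $a_0b_1a_1\cdots b_ma_m$ (colour classes $\{a_0,\dots,a_m\}$ and $\{b_1,\dots,b_m\}\cup\{h\}$, corners $a_0,a_m$, hub $h$ adjacent to every $a_j$), the interior path-vertices $a_j$ ($0<j<m$) have all three neighbours $\{b_j,b_{j+1},h\}$ inside $H$; each corner has two neighbours inside $H$ (the hub and one path-neighbour) and one external neighbour furnished by an $\alpha/\beta$ edge; the hub has degree $m+3$, hence is cubic only when $H=K_2$; and every interior $b_j$ has degree two. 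So $x$ is a corner, an interior path-vertex, or (when $H=K_2$) either vertex of that $K_2$, and the hypotheses on $w$ force $w$ to be either the external neighbour of $x$ (case (i)) or an interior degree-two neighbour of $x$ in $H$ (case (ii)).

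If $w$ is the external neighbour of $x$, I would keep the cycle inside $H$, using a $4$-cycle through $x$ and the hub. For a corner $x=a_m$ this is $a_m\,b_m\,a_{m-1}\,h$, and for an interior vertex $x=a_j$ with $w$ removed I take $a_j\,b_j\,a_{j-1}\,h$ or $a_j\,b_{j+1}\,a_{j+1}\,h$ (whichever avoids the removed neighbour). These are genuine $4$-cycles of $H$ (the hub is adjacent to every $a_\ell$), hence even, they contain the correct two edges at $x$, and they avoid the external $w$; for $H=C_4$ this degenerates harmlessly to the whole $4$-cycle.

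If instead $w$ is an interior degree-two neighbour of $x$ inside $H$, or if $H=K_2$ (so both non-$w$ edges of $x$ leave $H$), I would route the cycle through a neighbouring half biwheel via one doubleton: leave $H$ along the $\alpha$- (resp.\ $\beta$-) edge at $x$, traverse a path inside the adjacent biwheel $H_{i+1}$ (resp.\ $H_{i-1}$) between its two connection points, and return along the companion doubleton edge, which lands back on the vertex of $H$ kept adjacent to $x$ — namely the hub $h$ (or, when $H=K_2$, the partner vertex). The decisive point is parity. Writing out a two-colouring of $J-R_0-\cdots-R_{k-1}$ shows that every $\alpha$-edge joins the two ``$A$''-classes and every $\beta$-edge joins the two ``$B$''-classes, so the two connection points of the traversed biwheel lie in opposite colour classes (e.g.\ $w_{i+1}$ in one class and $z_{i+1}$ in the other); since that biwheel is connected and bipartite, any path between them is odd. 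The resulting cycle is then two edges at $x$, plus this odd path, plus one returning doubleton edge, of total length $3+\text{odd}$, i.e.\ even; it is clearly simple since it meets $H$ only in $x$ and its kept neighbour.

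The single conceptual point is the parity computation $3+\text{odd}=\text{even}$; everything else is bookkeeping, which I expect to be the main (routine) obstacle. Concretely one must confirm the global colouring claim (that $\alpha$'s join $A$-classes and $\beta$'s join $B$-classes) and check that in each corner configuration the companion doubleton edge really returns to the hub of $H$ — both of which depend on whether the hub lies in $A_i$ or in $B_i$. However, the construction is symmetric under the relabelling $(\alpha,w,x)\leftrightarrow(\beta,z,y)$ together with $A_i\leftrightarrow B_i$ (and under reversing the cyclic order and the path orientation), so it suffices to verify one representative of each of the three cubic-vertex types, with the remaining cases following by symmetry. The degenerate biwheels $K_2$ and $C_4$ are the only ones needing a separate glance, and they are dispatched immediately by the routing and the within-$H$ templates respectively, invoking Proposition~\ref{prop:corner-hub-path-conformal} and Lemma~\ref{lem:4.4} as needed.
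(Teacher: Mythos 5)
Your proposal is correct and follows the same route as the paper: the paper simply restricts attention to $J[V(H_0)\cup V(H_1)\cup V(H_2)]$ and leaves the case analysis (on whether $H$ is $K_2$ and on the positions of $x$ and $w$) as an exercise, which is exactly the analysis you carry out. Your classification of the cubic vertices, the within-$H$ $4$-cycles through the hub, and the parity argument for the detour through one adjacent half biwheel (odd path between opposite colour classes plus three further edges) all check out.
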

\begin{proof}
    We adopt notation from the definition of generalized prisms, and adjust notation so that $H=H_1$. We leave it as an exercise for the reader to locate the desired even cycle in the induced subgraph $J[V(H_0) \cup V(H_1) \cup V(H_2)]$ --- by considering various cases depending on whether $H_1$ is $K_2$ or not, and depending on the specific choices of $x$ and $w$.
\end{proof} 

For any member $J \in \mathcal{G}_0$, it is easy to see that the planar embedding of $J$ has precisely two odd faces whose boundaries are vertex-disjoint and comprise a spanning subgraph of $J$. We use $D$ and $D^{'}$ to denote these facial cycles. By adjusting notation, one of these cycles contains each $\alpha_i$ whereas the other one contains each $\beta_i$, where  \{$R_i:=\{\alpha_i,\beta_i\},0\leq i \leq k-1\}$ is the set of all removable doubletons. Using these observation, we now proceed to discuss the existence of specific osculating bicycles in generalized prisms.

\begin{figure}[!htb]
    \centering
         \begin{tikzpicture}[every node/.style={draw=black, circle,scale=0.5}, scale=0.35]
          
            \node[label=below:{\large $z_0$}] (a2) at (-3,4) {};
            \node[label=below:{$ $}] (a3) at (-1.5,4) {};
            \node[label=below:{$ $}] (a4) at (0,4) {};
            \node[label=below:{$ $}] (a5) at (1.5,4) {};
            \node[label=below:{\large $y_0$}] (a8) at (3,4) {};
            \node[label=above:{\large $w_0=x_0$}] (a11) at (0,8) {};
            \draw[blue,ultra thick] (a2) -- (a3) -- (a4);
            \draw[blue,ultra thick] (a4) -- (a5) -- (a8);
            \draw (a11) -- (a2);
            \draw (a11) -- (a4);
            \draw (a11) -- (a8);

            \node[label=above:{\large $w_1=x_1$}] (a21) at (9,8) {};
            \node[label=below:{\large $z_1$}] (a22) at (4.5,4) {};
            \node[label=above:{}] (a23) at (6,4) {};
            \node[label=below:{\large $t$}] (a24) at (7.5,4) {};
            \node[label=above:{}] (a25) at (9,4) {};
            \node[label=below:{\large $t^{'}$}] (a26) at (10.5,4) {};
            \node[label=above:{}] (a27) at (12,4) {};
            \node[label=below:{\large $y_1$}] (a28) at (13.5,4) {};
            \draw (a21) -- (a22);
            \draw[blue,ultra thick] (a21) --node[right,draw=none,black]{\large $f$} (a24);
            \draw[blue,ultra thick] (a21) --node[right,draw=none,black]{\large $f^{'}$} (a26);
            \draw (a21) -- (a28);
            \draw[blue,ultra thick] (a22) -- (a23) -- (a24);
            \draw (a24)-- (a25) -- (a26);
            \draw[blue,ultra thick] (a26) -- (a27) -- (a28);

            \node[label=below:{\large $y_2=z_2$}] (a31) at (15.5,4) {};
            \node[label=above:{\large $w_2=x_2$}] (a32) at (15.5,8) {};
           \draw (a31)--(a32);

            \node[label=above:{\large $w_3$}] (a42) at (18,8) {};
            \node[label=below:{$ $}] (a43) at (19.5,8) {};
            \node[label=below:{$ $}] (a44) at (21,8) {};
            \node[label=below:{}] (a45) at (22.5,8) {};
            \node[label=above:{\large $x_3$}] (a48) at (24,8) {};
            \node[label=below:{\large $y_3=z_3$}] (a411) at (21,4) {};
            \draw[red,ultra thick] (a42) -- (a43) -- (a44);
            \draw[red,ultra thick] (a44) -- (a45) -- (a48);
            \draw (a411) -- (a42);
            \draw (a411) -- (a44);
            \draw (a411) -- (a48);

              \node[label=below:{\large $y_4=z_4$}] (a51) at (26.5,4) {};
            \node[label=above:{\large $w_4=x_4$}] (a52) at (26.5,8) {};
         
           \draw (a51)--(a52);

           \draw[blue,ultra thick] (a8)  -- (a22);
           \draw[red,ultra thick] (a11)  --node[above,draw=none,black]{\large $\alpha_0$} (a21);

           \draw[blue,ultra thick] (a28) -- (a31);
           \draw[blue,ultra thick] (a31) -- (a411);
           \draw[blue,ultra thick] (a411) -- (a51);
           \draw[blue,ultra thick] (a51) to[in=205,out=335] (a2);

           \draw[red,ultra thick] (a21) --node[above,draw=none,black]{\large $\alpha_1$} (a32);
           \draw[red,ultra thick] (a32) -- (a42);
           \draw[red,ultra thick] (a48) -- (a52);
           \draw[red,ultra thick] (a52) to[in=150,out=30] (a11);

    \end{tikzpicture}
    \caption{Illustration for the proof of Proposition~\ref{prop:g_0_osculating_bicycle} (i)}
    \label{fig:Illustation of Propositions statement (i)}
\end{figure}
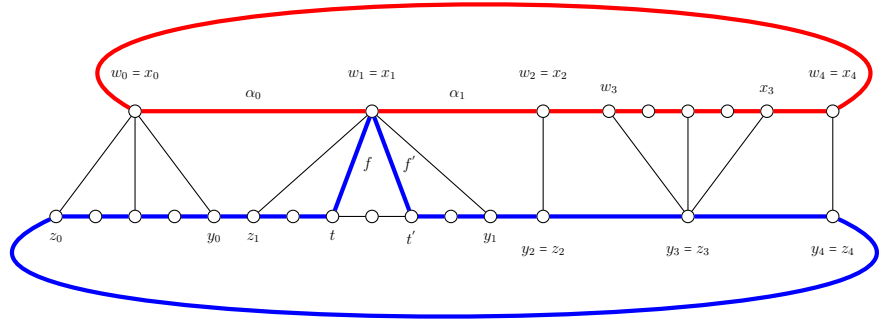

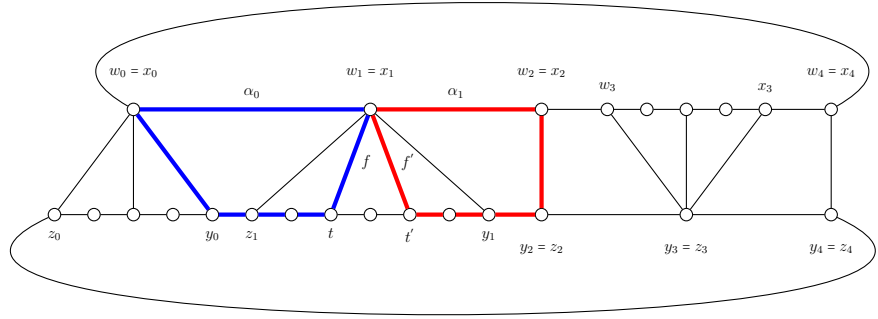
\begin{figure}[!htb]
    \centering
         \begin{tikzpicture}[every node/.style={draw=black, circle,scale=0.5}, scale=0.35]
          
            \node[label=below:{\large $z_0$}] (a2) at (-3,4) {};
            \node[label=below:{\large $ $}] (a3) at (-1.5,4) {};
            \node[label=below:{$ $}] (a4) at (0,4) {};
            \node[label=below:{$ $}] (a5) at (1.5,4) {};
            \node[label=below:{\large $y_0$}] (a8) at (3,4) {};
            \node[label=above:{\large $w_0=x_0$}] (a11) at (0,8) {};
            \draw (a2) -- (a3) -- (a4);
            \draw (a4) -- (a5) -- (a8);
            \draw (a11) -- (a2);
            \draw (a11) -- (a4);
            \draw[blue,ultra thick] (a11) -- (a8);

            \node[label=above:{\large $w_1=x_1$}] (a21) at (9,8) {};
            \node[label=below:{\large $z_1$}] (a22) at (4.5,4) {};
            \node[label=above:{}] (a23) at (6,4) {};
            \node[label=below:{\large $t$}] (a24) at (7.5,4) {};
            \node[label=above:{}] (a25) at (9,4) {};
            \node[label=below:{\large $t^{'}$}] (a26) at (10.5,4) {};
            \node[label=above:{}] (a27) at (12,4) {};
            \node[label=below:{\large $y_1$}] (a28) at (13.5,4) {};
            \draw (a21) -- (a22);
            \draw[blue,ultra thick] (a21) --node[right,draw=none,black]{\large $f$} (a24);
            \draw[red,ultra thick] (a21) --node[right,draw=none,black]{\large $f^{'}$} (a26);
            \draw (a21) -- (a28);
            \draw[blue,ultra thick] (a22) -- (a23) -- (a24);
            \draw (a24)-- (a25) -- (a26);
            \draw[red,ultra thick] (a26) -- (a27) -- (a28);

            \node[label=below:{\large $y_2=z_2$}] (a31) at (15.5,4) {};
            \node[label=above:{\large $w_2=x_2$}] (a32) at (15.5,8) {};
           \draw[red,ultra thick] (a31)--(a32);

            \node[label=above:{\large $w_3$}] (a42) at (18,8) {};
            \node[label=below:{\large $ $}] (a43) at (19.5,8) {};
            \node[label=below:{$ $}] (a44) at (21,8) {};
            \node[label=below:{}] (a45) at (22.5,8) {};
            \node[label=above:{\large $x_3$}] (a48) at (24,8) {};
            \node[label=below:{\large $y_3=z_3$}] (a411) at (21,4) {};
            \draw (a42) -- (a43) -- (a44);
            \draw (a44) -- (a45) -- (a48);
            \draw (a411) -- (a42);
            \draw (a411) -- (a44);
            \draw (a411) -- (a48);

              \node[label=below:{\large $y_4=z_4$}] (a51) at (26.5,4) {};
            \node[label=above:{\large $w_4=x_4$}] (a52) at (26.5,8) {};
         
           \draw (a51)--(a52);

           \draw[blue,ultra thick] (a8)  -- (a22);
           \draw[blue,ultra thick] (a11)  --node[above,draw=none,black]{\large $\alpha_0$} (a21);

           \draw[red,ultra thick] (a28) -- (a31);
           \draw (a31) -- (a411);
           \draw (a411) -- (a51);
           \draw (a51) to[in=205,out=335] (a2);

           \draw[red,ultra thick] (a21) --node[above,draw=none,black]{\large $\alpha_1$} (a32);
           \draw (a32) -- (a42);
           \draw (a48) -- (a52);
           \draw (a52) to[in=150,out=30] (a11);

    \end{tikzpicture}
    \caption{Illustration for the proof of Proposition~\ref{prop:g_0_osculating_bicycle} (ii)}
    \label{fig:Illustation of Propositions statement (ii)}
\end{figure}
\begin{prop}{\sc [Osculating Bicycles in Generalized Prisms]}\newline
\label{prop:g_0_osculating_bicycle}
    Let $J \in \mathcal{G}_0$ and let $x$ denote a vertex of degree four or more. Let $\alpha_0$ and~$\alpha_1$ denote removable doubleton edges in $\partial(x)$ and let $f, f^{'} \in \partial(x) - \{\alpha_0, \alpha_1\}$ so that $\alpha_0, \alpha_1, f^{'},f$ appear in this cyclic order in the planar embedding of $J$. Then there exist:
    \begin{enumerate}[(i)]
        \item an odd osculating bicycle $(Q,Q^{'})$ such that $\alpha_0, \alpha_1 \in E(Q)$ and $f,f^{'} \in E(Q^{'})$, and
        \item an even osculating bicycle $(C,C^{'})$ such that $\alpha_0,f \in E(C)$ and $\alpha_1,f^{'} \in E(C^{'})$.
    \end{enumerate} 
\end{prop}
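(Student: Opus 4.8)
The plan is to exhibit both bicycles explicitly, meeting at $x$, using the two odd facial cycles $D$ (containing every $\alpha_i$) and $D'$ (containing every $\beta_i$), which are vertex-disjoint and span $J$. First I would normalize the configuration. A vertex of degree four or more in a member of $\mathcal{G}_0$ can only be a hub of some \bw, since every corner and every path-internal vertex has degree at most three. By the symmetry interchanging the two color classes (equivalently $w\leftrightarrow z$, $x\leftrightarrow y$, $\alpha\leftrightarrow\beta$, $D\leftrightarrow D'$), I may assume, after cyclically relabeling, that $x=w_1=x_1$ is the hub of $H_1$, that its two removable-doubleton edges are $\alpha_0=x_0x$ and $\alpha_1=xw_2$, and that $f=xs$, $f'=xs'$ join $x$ to two of its hub-neighbours $s,s'$ inside $H_1$. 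Since $\alpha_0,\alpha_1$ lie on $D$, the hub $x$ lies on $D$ and (as $V(D)\cap V(D')=\emptyset$) not on $D'$; the hub-neighbours $s,s'$, being bottom vertices of $H_1$, lie on the sub-arc of $D'$ running through $H_1$, namely the path $z_1\cdots y_1$. I would also record one parity fact for repeated use: deleting all doubleton edges leaves the disjoint bipartite \bw s, so $J$ admits a global proper $2$-colouring (classes $\bigcup A_i$ and $\bigcup B_i$) under which every $\alpha_i$ and every $\beta_i$ is monochromatic; hence a cycle of $J$ is even if and only if it uses an even number of doubleton edges (this is the colouring underlying Lemma~\ref{lem:4.4}).

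For (i) I would take $Q:=D$, which is odd, contains $\alpha_0,\alpha_1$, and meets $x$ in precisely these two edges. To build $Q'$ I reroute $D'$ through $x$: replace the sub-arc of $D'$ running between $s$ and $s'$ inside $H_1$ by the detour $s\xrightarrow{f}x\xrightarrow{f'}s'$. The detour uses no doubleton edge, and the removed arc used none either, so $Q'$ still contains all $k$ edges $\beta_0,\dots,\beta_{k-1}$; as $k$ is odd, $Q'$ is odd by the parity fact, and it contains $f,f'$ by construction. Because $D$ and $D'$ are vertex-disjoint and the only vertex the rerouting introduces is $x\in V(D)$, we get $V(Q)\cap V(Q')=\{x\}$, so $(Q,Q')$ is the desired odd osculating bicycle. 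Note that (i) needs no hypothesis on the cyclic order: any two hub-edges $f,f'$ work, since their endpoints always lie in the same class and hence at even distance along $D'$.

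For (ii) the cyclic-order hypothesis is what I would exploit. The edges $\alpha_0,\alpha_1$ are consecutive in the rotation at $x$ (they bound the top face there), and the hub-edges occupy the complementary arc; reading the rotation as $\alpha_0,\alpha_1,f',f$ and using that $\alpha_0$ points towards $H_0$ (hence, via $\beta_0=y_0z_1$, towards the $z_1$-end of the $H_1$-arc of $D'$) while $\alpha_1$ points towards $H_2$ (via $\beta_1=y_1z_2$, towards the $y_1$-end), I conclude that $f$ attaches at a vertex $s$ on the $z_1$-side and $f'$ at $s'$ on the $y_1$-side, so the sub-paths $z_1\cdots s$ and $s'\cdots y_1$ of $D'$ are internally disjoint. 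I then set
\[
C:=\alpha_0\cup P_0\cup\beta_0\cup(z_1\cdots s)\cup f,\qquad
C':=\alpha_1\cup P_2\cup\beta_1\cup(y_1\cdots s')\cup f',
\]
where $P_0$ is any simple path of $H_0$ from $x_0$ to $y_0$ and $P_2$ any simple path of $H_2$ from $w_2$ to $z_2$. Each of $C,C'$ contains exactly two doubleton edges ($\{\alpha_0,\beta_0\}$, respectively $\{\alpha_1,\beta_1\}$), hence is even by the parity fact, with $\alpha_0,f\in E(C)$ and $\alpha_1,f'\in E(C')$. Their vertex sets lie in disjoint regions --- $H_0$ together with the $z_1$-side of $H_1$ for $C$, and $H_2$ together with the $y_1$-side of $H_1$ for $C'$ --- so they share only $x$, giving the even osculating bicycle.

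The parities are the easy part, handled uniformly by counting doubleton edges via the global $2$-colouring. The main obstacle, and the only place planarity genuinely enters, is the bookkeeping in (ii): verifying that $C$ and $C'$ are internally vertex-disjoint. This reduces entirely to showing that $f$ and $f'$ attach to opposite sides of the $H_1$-arc of $D'$, which is exactly the content of the prescribed cyclic order $\alpha_0,\alpha_1,f',f$; once the embedding is used to place $\alpha_0$ on the $z_1$-side and $\alpha_1$ on the $y_1$-side, the separation of $s$ from $s'$ follows. I would finally note that the auxiliary paths $P_0,P_2$ and the two $H_1$-arcs exist and are simple because each $H_i$ is a connected bipartite \bw with $x_0\neq y_0$ and $w_2\neq z_2$, and that for $k\geq 3$ the \bw s $H_0$ and $H_2$ are distinct, so no accidental coincidences arise.
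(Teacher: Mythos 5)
Your construction is correct and essentially identical to the paper's: for (i) the paper also takes $Q:=D$ and $Q':=D'+f+f'-t(H_1-h)t'$, and for (ii) it exhibits the same two cycles $h_1ft(H_1-h)z_1\beta_0y_0x_0\alpha_0h_1$ and $h_1f't'(H_1-h)y_1\beta_1z_2w_2\alpha_1h_1$, with parity read off from the number of doubleton edges used. Your write-up merely supplies the disjointness and parity bookkeeping that the paper delegates to its figures.
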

\begin{proof}
    We adopt notation from the definition of generalized prisms; thus $x=h_1$. Let $f:=h_1t$ and $f^{'}:=h_1t^{'}$. Let $D$ and $D^{'}$ denote the odd facial cycles as discussed earlier, and adjust notation so that $h_1 \in V(D)$.

    To prove (i), we display an odd osculating bicycle $(Q,Q^{'})$ whose each constituent cycle uses precisely one edge from each of the removable doubletons of $J$ as follows: $Q:=D$ and $Q^{'}:=D^{'}+f+f^{'}-t(H_1-h)t^{'}$; see Figure~\ref{fig:Illustation of Propositions statement (i)}.

    To prove (ii), we display an even osculating bicycle $(C,C^{'})$ whose each constituent cycle uses either zero or two edges from each of the removable doubletons of $J$ as follows: $C:=h_1ft(H_1-h)z_1\beta_0y_0x_0\alpha_0h_1$ and $C^{'}:=h_1f^{'}t^{'}(H_1-h)y_1\beta_1z_2w_2\alpha_1h_1$; see Figure~\ref{fig:Illustation of Propositions statement (ii)}.
\end{proof}

In each of the following three sections, we will define a family of graphs $\mathcal{G}_i$ where \mbox{$i \in \{1,2,3\}$}. Each member of $\mathcal{G}_i$ will be defined using some number of \bw s. For each such \bw\ $H_{}[A,B]$, we adopt the convention that $h$ denotes the hub and that $u$~and~$v$ denote the corners (that are not necessarily distinct). Also, this convention is extended naturally to accommodate the use of subscripts.  

\subsection{Generalized wheels \texorpdfstring{$\mathcal{G}_1$}{} }



We describe a family of graphs $\mathcal{G}_1$. Each member of $\mathcal{G}_1$, say~$J$, is constructed from $k$ disjoint \bw s, say $H_i$ for $0 \leq i \leq k-1$, where $k$ is odd and at least three,  by identifying all of their hubs into a single vertex $h$ and adding the set of edges $E_3:=\{v_iu_{i+1}:0\leq i \leq k-1\}$. Figure~\ref{fig:a member of g_1} depicts two examples where the \bw s $H_i$ are shown in blue. We refer to $h$ as a {\em hub of $J$}.

Observe that $J-E_3$ is precisely the (bipartite) graph obtained by identifying the hubs of $k=|E_3|$ \bw s and $h$ is its unique cut vertex. Furthermore, when $J$ is not $K_4$, the set $E_3$ comprises precisely those edges whose both ends are cubic. If each \bw\ $H_i$ (in the above definition) is isomorphic to $K_2$, then $J$ is simply a wheel. In this sense, the family $\mathcal{G}_1$ indeed generalizes wheels. Note that the hub $h$ is the unique vertex of degree four or more except when $J$ is $K_4$.

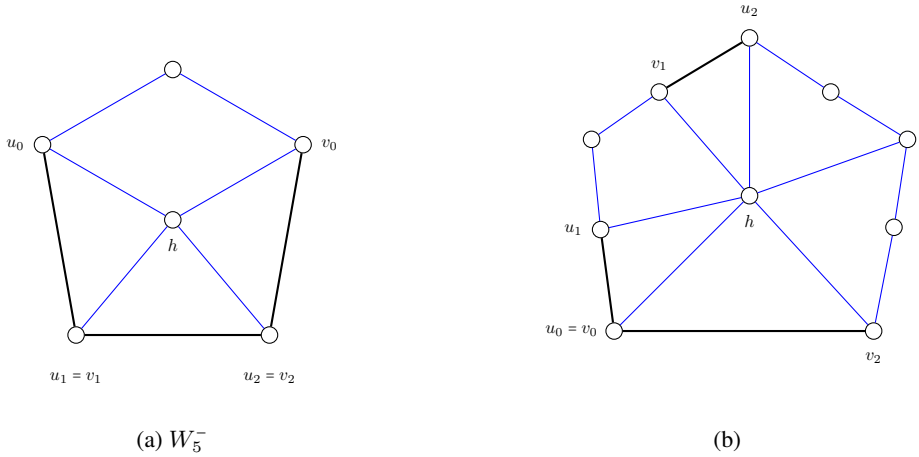
\begin{figure}[!htb]
    \centering
    \begin{subfigure}{0.5\textwidth}
        \centering
      \begin{tikzpicture}[every node/.style={draw=black, circle,scale=0.66}, scale=0.5,rotate=270]
                
                

                \node[label=below:{$h$}] (a1) at (0:0) {};
                \node (a2) at (180:4){};
                \node[label=below:{$u_1=v_1$}] (a3) at (140:-4){};
                \node[label=below:{$u_2=v_2$}] (a4) at (220:-4){};
                \node[label=right:{$v_0$}] (a5) at (120:4){};
                \node[label=left:{$u_0$}] (a6) at (240:4){};
                \draw[thick] (a3) -- (a4);\draw[thick] (a4) -- (a5);
                \draw[blue] (a5) -- (a2); \draw[blue] (a2) -- (a6); \draw[thick] (a6)-- (a3);
                \draw[blue] (a1) -- (a4);
                \draw[blue] (a1) -- (a3);
                \draw[blue] (a1) -- (a5);
                \draw[blue] (a1) -- (a6);
                
    \end{tikzpicture}
     \caption{$W_5^{-}$}
        \label{fig:w_5-}
    \end{subfigure}%
      \begin{subfigure}{0.5\textwidth}
        \centering
    \begin{tikzpicture}[every node/.style={draw=black, circle,scale=0.66}, scale=0.6]
                \node[label=left:{$u_0=v_0$}] (a6) at (0,1.5) {};
                \node (a7) at (-0.5,5.75) {};
                \node[label=left:{$u_1$}] (a3) at (-0.3,3.75) {};
                \node[label=above:{$u_2$}] (a8) at (3,8) {};
                \node[label=above:{$v_1$}] (a5) at (1,6.8) {};
                \node[] (a4) at (4.8,6.8) {};
                \node (a9) at (6.5,5.75) {};
                \node (a2) at (6.2,3.8) {};
                \node[label=below:{$v_2$}] (a10) at (5.75,1.5) {};
                
                \node[label=below:{$h$}] (a1) at (3,4.5){};
                \node[draw=none] (a0) at (-0.1,-0.1){};
                
                \draw[blue] (a1) -- (a6);
                \draw[blue] (a1) -- (a8);
                \draw[blue] (a1) -- (a9);
                \draw[blue] (a1) -- (a10);
                \draw[blue] (a1) -- (a3);
                \draw[blue] (a1) -- (a5);
                \draw[thick] (a6) -- (a3);
                \draw[blue] (a7) -- (a5);
                \draw[thick] (a5) -- (a8);
                \draw[blue] (a7) -- (a3);
                \draw[blue] (a8) -- (a4);
                \draw[blue] (a9) -- (a4);
                \draw[blue] (a2) -- (a10);
                \draw[blue] (a9) -- (a2);
                \draw[thick] (a10) -- (a6);
    \end{tikzpicture}
     \caption{}
        
    \end{subfigure}%
    \caption{generalized wheels}
    \label{fig:a member of g_1}
\end{figure}

Each member $J$ of $\mathcal{G}_1$ is a planar \mcg~and ${\sf f_{odd}}(J)=k+1$. Consequently, they are not near-bipartite in general; see Proposition~\ref{prop:near-bipartite-necessary}. The graph $J$ is near-bipartite if and only if $k=3$ and at least one of $H_0$, $H_1$ and $H_2$ is isomorphic to~$K_2$. Furthermore, if $k=3$ and if $H_i$ denotes a \bw\ that is isomorphic to~$K_2$ then $R_i:= \{u_ih,v_{i+1}u_{i+2}\}$ is a removable doubleton; it follows that the number of \bw s (among $H_0$, $H_1$, and $H_2$) that are isomorphic to $K_2$ equals the number of removable doubletons in $J$. 

To see that each member $J$ of $\mathcal{G}_1$ is \ce, note that $J-h$ is an odd cycle; consequently, if $C$ is any even cycle, then $h \in V(C)$ and $J-V(C)$ is an odd path. 

\begin{prop}
\label{prop:G_1-ce}
   Each member of $\mathcal{G}_1$ is \ce.\qed
\end{prop}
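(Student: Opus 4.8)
The plan is to exploit the structural fact, noted just before the statement, that $J-h$ is a single odd cycle; once this is in hand, conformality of every even cycle drops out of a short parity argument. First I would verify the claim about $J-h$ directly from the construction. Each \bw\ $H_i$, upon deletion of its hub, collapses to its defining even path $P_i$ with ends $u_i$ and $v_i$; since all hubs are identified to $h$, deleting $h$ from $J$ leaves precisely the disjoint union of the paths $P_0,\dots,P_{k-1}$ together with the edges of $E_3=\{v_iu_{i+1}:0\le i\le k-1\}$. These edges splice the $k$ paths end-to-end, in the cyclic order $u_0,v_0,u_1,v_1,\dots$, into a single closed walk, namely the cycle $Z:=J-h$. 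Counting edges, each $P_i$ has an even number of edges (its ends lie in the same colour class of $H_i$, and in the degenerate case $H_i\cong K_2$ it contributes $0$ edges), and there are exactly $k$ linking edges; as $k$ is odd, $Z$ is an \emph{odd} cycle.

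Next I would argue that every even cycle $C$ of $J$ passes through $h$. Any cycle avoiding $h$ is a subgraph of $Z$; but a cycle contains no proper cyclic subgraph, so the only cycle inside $Z$ is $Z$ itself, which is odd. Hence an even cycle cannot avoid $h$, giving $h\in V(C)$.

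With $h\in V(C)$, the edges of $C$ other than the two incident with $h$ all avoid $h$, hence lie in $Z$; thus $C-h$ is a path whose edges are edges of $Z$, and since $Z$ is a cycle this path is an arc of $Z$. Consequently $J-V(C)$ is exactly the complementary arc of $Z$, a (possibly empty) path. Now for the parity bookkeeping: $|V(Z)|$ is odd while $|V(C)|$ is even, so $J-V(C)$ has $|V(Z)|-(|V(C)|-1)$ vertices, an even number, and a path on an even number of vertices has a \pema. Therefore $J-V(C)$ is matchable, i.e.\ $C$ is conformal. As $C$ was an arbitrary even cycle, $J$ is \ce.

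I would expect the only genuinely fiddly step to be the first one --- translating the gluing description of a member of $\mathcal{G}_1$ into the clean statement ``$J-h$ is an odd cycle'', in particular confirming the parity of each $P_i$ and checking that the $E_3$ edges attach the corners so that a single cycle (rather than several) results. Everything after that is an elementary arc-and-parity argument, and the degenerate case where $C$ uses every vertex of $Z$ (so $J-V(C)=\emptyset$) is absorbed automatically by the empty matching.
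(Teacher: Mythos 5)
Your proposal is correct and follows exactly the paper's (very terse) argument: the paper likewise observes that $J-h$ is an odd cycle, deduces that every even cycle $C$ must contain $h$, and concludes that $J-V(C)$ is an odd path and hence matchable. You have simply supplied the routine verifications (parity of the paths $P_i$, the splicing by $E_3$, and the arc-and-parity count) that the paper leaves to the reader.
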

 



We now make an observation pertaining to the existence of even cycles in generalized wheels containing a specified cubic vertex but not containing a specified neighbor.

\begin{prop}{\sc [Even Cycles in Generalized Wheels]}\newline
\label{prop:G_1_cubic_vertex}
    Let $h$ denote the hub of $J \in \mathcal{G}_1 - K_4$ and let $E_3$ denote the set of edges whose both ends are cubic. Let $x$ denote a cubic vertex of $J$ and let $w \neq h$ denote a neighbor of $x$ in $J$. If there is no even cycle in $J-w$ that contains $x$, then (i) $|E_3|=3$ and (ii) each of $x$ and $w$ is an isolated vertex in $J-E_3-h$.
\end{prop}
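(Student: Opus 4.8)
The plan is to prove the contrapositive: assuming $J-w$ has no even cycle through $x$, I will recover both conclusions; in fact, whenever either conclusion fails I will exhibit an even cycle of $J-w$ through $x$. The key is the right parity bookkeeping. Write $O:=J-h$; as observed before Proposition~\ref{prop:G_1-ce}, $O$ is the odd cycle obtained by concatenating the (even) corner-to-corner paths of $H_0,\ldots,H_{k-1}$ along the edges of $E_3$. Since $J-E_3$ is bipartite, let $A:=\bigcup_i A_i$ be the colour class containing all the corners; this is exactly the set of cubic vertices, and the edges of $E_3$ are precisely the edges with both ends in $A$. As a bipartite cycle is even and each $E_3$-edge flips parity, \emph{a cycle of $J$ is even if and only if it uses an even number of edges of $E_3$}. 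In particular every even cycle contains $h$ (the only cycle of $J-h=O$ is $O$ itself, which is odd), and each even cycle is $h$ together with two hub-edges $ha,hb$ ($a,b\in A$) and an arc of $O$ from $a$ to $b$ carrying an even number of $E_3$-edges.

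Since $x$ is cubic, $x\in A$, so its neighbours are $h$ and its two $O$-neighbours $x',x''$; as $w\neq h$ we have $w\in\{x',x''\}$, and I set $w=x'$ and traverse $O$ from $x$ via $x''$. Any even cycle through $x$ avoiding $w$ must use $hx$ (otherwise it uses both $O$-edges at $x$, hence contains $w$); thus it equals $h,x,x'',\ldots,b,h$ for some $A$-vertex $b$ reached along $O$ before $w$, with the arc $x\to b$ carrying an even number of $E_3$-edges. So the task is reduced to: \emph{walking along $O$ away from $w$, find an $A$-vertex $b$, before $w$, whose arc from $x$ has even $E_3$-count}, noting that the count starts at $0$ and increases by one at each $E_3$-edge.

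Now the case analysis is keyed to the first step $xx''$. If $xx''$ is a path-edge (which happens when $x$ is internal to its \bw, or a corner of a nontrivial \bw\ with $w$ its $E_3$-neighbour), then $x''$ is a non-corner and the next $A$-vertex is two steps away using no $E_3$-edge, giving an even $4$-cycle avoiding $w$. Hence the hypothesis forces the step away from $w$ to be an $E_3$-edge, so $x$ is a corner. If $x$ is a corner of a \emph{nontrivial} \bw, the arc crosses one $E_3$-edge, then the adjacent corner-to-corner path (no $E_3$-edge), then a second $E_3$-edge, landing on an $A$-vertex of $E_3$-count two; since $w$ lies inside $x$'s own \bw\ on the far side, this vertex is reached before $w$ for every $k\ge3$, again an even cycle. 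Therefore $x$ must be a corner $u_i=v_i$ with $H_i\cong K_2$; equivalently (deleting $h$ and $E_3$ leaves the corner-to-corner paths, and an $A$-vertex is isolated there iff its path is a single vertex) $x$ is isolated in $J-E_3-h$.

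Finally let $x=u_i=v_i$ be such a $K_2$-corner; both its $O$-edges lie in $E_3$, and I take $w=v_{i-1}$, $x''=u_{i+1}$. Walking from $x$ via $u_{i+1}$, the $E_3$-count first becomes two at the corner $u_{i+2}$ reached just after the \bw\ $H_{i+1}$. If $k\neq3$ then $u_{i+2}\neq v_{i-1}=w$, so this corner precedes $w$ and yields an even cycle; hence $k=|E_3|=3$, which is~(i). When $k=3$ we have $u_{i+2}=u_{i-1}$, lying in the same \bw\ as $w=v_{i-1}$; the resulting even cycle ending at $u_{i-1}$ avoids $w$ exactly when $u_{i-1}\neq v_{i-1}$, i.e.\ when $H_{i-1}$ is nontrivial. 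Thus the absence of an even cycle forces $H_{i-1}\cong K_2$, so $w=u_{i-1}=v_{i-1}$ is a $K_2$-corner, isolated in $J-E_3-h$; this is~(ii). The step needing the most care is exactly this final reachability bookkeeping — locating the first even-$E_3$-count $A$-vertex relative to $w$ — since it is precisely what pins down both $k=3$ and the fact that $w$ too must be a $K_2$-corner.
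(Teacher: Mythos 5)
Your proof is correct and follows essentially the same route as the paper's: the even cycles you exhibit (the $4$-cycle inside a half biwheel when the step away from $w$ is a path edge, and the two-$E_3$-edge arc through an adjacent half biwheel otherwise) are exactly the witnesses the paper uses, and your final reachability analysis reproduces the paper's condition $u_3=v_0$ that forces $k=3$ and makes both $x$ and $w$ corners of $K_2$-components. The explicit parity bookkeeping (a cycle is even iff it carries an even number of $E_3$-edges, hence every even cycle passes through $h$ and is determined by an arc of $J-h$ with even $E_3$-count) is a clean way of justifying the steps the paper leaves to the reader, but it does not change the underlying argument.
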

\begin{proof}
    We adopt notation from the definition of generalized wheels, and adjust notation so that $x \in V(H_1)$ and $w \in V(H_0) \cup V(H_1)$. Assume that there is no even cycle in $J-w$ that contains $x$.

    First, consider the case in which $w \in V(H_1)$. If $x \notin \{u_1,v_1\}$, then the reader may locate an even cycle in $H_1-w$ that contains $x$; a contradiction. Otherwise, we may adjust notation so that $x = u_1$. Observe that $Q:=hu_1v_0(H_0-h)u_0v_{k-1}h$ is an even cycle in $J-w$ that contains~$x$; a contradiction.

    Next, consider the case in which $w \in V(H_0)$; thus $x=u_1$ and $w=v_0$. If $H_1$ is not $K_2$, then $H_1$ has an even cycle containing $x$; this contradicts our assumption. Now suppose that $H_1$ is $K_2$; in other words, $x=u_1=v_1$.  Observe that $Q:=hv_1u_2(H_2-h)v_2u_3h$ is an even cycle that contains $x$; furthermore, $Q$ does not contain $w=v_0$ if and only if $u_3 \neq v_0$. Hence, by our assumption, $u_3 = v_0$. It follows from the definition of generalized wheels that conditions (i)~and~(ii) hold.
\end{proof}

Finally, we proceed to discuss the existence of certain osculating bicycles in generalized wheels.

\begin{figure}[!htb]
    \centering

   \begin{tikzpicture}[every node/.style={draw=black, circle,scale=0.66}, scale=0.6]
        \node[label=above:{$h$}] (a0) at (0,0){};
        \node[label=right:{$u_0=v_0$}] (a1) at (27:3) {};
        \node[label=right:{$u_1$}] (a2) at (54:3) {};
        \node (a3) at (81:3) {};
        \node[label=left:{$v_1$}] (a4) at (108:3) {};
        \node[label=left:{$u_2$}] (a5) at (135:3) {};
        \node (a6) at (162:3) {};
        \node[label=left:{$v_2$}] (a7) at (189:3) {};
        
        \node[label=left:{$u_3$}] (a8) at (216:3) {};
        \node (a9) at (243:3) {};
        \node (a10) at (270:3) {};
        \node (a11) at (297:3) {};   
        \node[label=right:{$v_3$}] (a12) at (324:3) {};   
        \node[label=right:{$u_4 = v_4$}] (a13) at (351:3) {};   

        \draw[ultra thick, blue] (a0) --node[right,draw=none,black]{$e_0$} (a1);
        \draw[ultra thick, blue] (a0) --node[right,draw=none,black]{$e_1$} (a4);
        \draw[ultra thick, blue] (a1) -- (a2) -- (a3) -- (a4);

        \draw[ultra thick, red] (a0) --node[above,draw=none,black]{$e_r$} (a7);
        \draw[ultra thick, red] (a0) --node[right,draw=none,black]{$e_{r+1}$} (a10);
        \draw[ultra thick, red] (a7) -- (a8) -- (a9) -- (a10);

        \draw (a4) -- (a5) -- (a6) -- (a7);
        \draw (a10) -- (a11) -- (a12) -- (a13) -- (a1);

        \draw (a0) -- (a2);
        \draw (a0) -- (a5);
        \draw (a0) -- (a8);
        \draw (a0) -- (a12);
        \draw (a0) -- (a13);
    \end{tikzpicture}

    \caption{illustration for the proof of Proposition~\ref{prop:G_1_non_cubic_vertex} (iii)}
    \label{fig:Illustraton of proposition g_1 noncubic}
\end{figure}
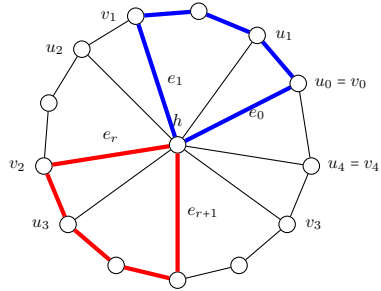

\begin{prop}{\sc [Osculating Bicycles in Generalized Wheels]}\newline
\label{prop:G_1_non_cubic_vertex}
    Let $h$ denote the hub of $J \in \mathcal{G}_1 - K_4$ and let $E_3$ denote the set of edges whose both ends are cubic. Let $H_0,H_1,\ldots,H_k$ denote the (\bw) blocks of $J-E_3$ so that they appear in this cyclic order (around the hub $h$) in the planar embedding of $J$, and adjust notation so that $E_3=\{v_iu_{i+1}: v_i \in V(H_i)~{\rm and }~u_{i+1} \in V(H_{i+1})\}$. The following statements hold:
    \begin{enumerate}[(i)]
        \item for any $r$ such that $0\leq r < k$, if the four edges $hu_0,hv_r,hu_{r+1}$ and $hv_k$ are pairwise distinct then $(Q:=hu_0v_kh,Q^{'}:=hv_ru_{r+1}h)$ is an odd osculating bicycle in $J$.
        \item for any $0 \leq i < r \leq k$, for distinct $e$ and $f$ in $E(H_i) \cap \partial_J(h)$ and distinct $e^{'}$~and~$f^{'}$ in $E(H_r) \cap \partial_J(h)$, there exists an even osculating bicycle $(Q,Q^{'})$ in $J$ such that $e,f \in E(Q)$ and $e^{'},f^{'} \in E(Q^{'})$.
        \item for any $0 < r \leq k$, for any four pairwise distinct edges $e_0,e_1,e_r,e_{r+1} \in \partial_J(h)$ that appear in this cyclic order in the planar embedding of $J$, such that $e_i \in E(H_i)$, there exists an odd osculating bicycle $(Q,Q^{'})$ in $J$ such that $e_0,e_1 \in E(Q)$ and $e_r,e_{r+1} \in E(Q^{'})$.
    \end{enumerate}
\end{prop}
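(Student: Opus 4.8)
The plan is to exploit the rigid structure of a generalized wheel: after deleting the hub~$h$, the graph $J-h$ is a single odd cycle --- call it the \emph{rim} --- and every remaining edge at $h$ is a spoke joining $h$ to one of the $A$-vertices of some \bw\ block $H_i$. Since each $H_i$ is a \bw\ whose hub is $h$, the even path $P_i$ underlying $H_i$ has all of its spoke-endpoints (in particular both corners) in the color class $A_i$ opposite to $h$; hence any sub-path of $P_i$ joining two spoke-endpoints has even length, whereas each edge of $E_3$ contributes length one. All three osculating bicycles will be assembled from spokes, sub-paths of the $P_i$, and edges of $E_3$, all passing through $h$; in each case I would first record the parity of each constituent cycle and then argue that the two cycles meet only at $h$.

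For (i), I would simply take the two triangles $Q:=hu_0v_kh$ and $Q':=hv_ru_{r+1}h$, each consisting of two spokes together with the single $E_3$-edge joining two consecutive corners (namely $v_ku_0$ and $v_ru_{r+1}$). Each triangle has length three, so both are odd; and the hypothesis that the four spokes $hu_0,hv_r,hu_{r+1},hv_k$ are pairwise distinct forces the four rim-endpoints to be distinct, so $Q$ and $Q'$ share only $h$. For (ii), I would work entirely inside the two blocks: in $H_i$, the two given spokes $e,f$ together with the sub-path of $P_i$ joining their endpoints form a cycle $Q$ of length $2+(\text{even})$, hence even, and the analogous construction inside $H_r$ yields an even cycle $Q'$. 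Because $H_i$ and $H_r$ are distinct blocks of $J-E_3$ meeting only at the cut vertex $h$, the cycles $Q$ and $Q'$ intersect only at $h$, giving the required even osculating bicycle.

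The substance lies in (iii), where each cycle must straddle two consecutive blocks. I would let $Q$ run along the spoke $e_0$ into $H_0$, continue along $P_0$ to the corner $v_0$, cross the $E_3$-edge $v_0u_1$, and return to $h$ along a sub-path of $P_1$ and the spoke $e_1$, as in Figure~\ref{fig:Illustraton of proposition g_1 noncubic}; the arc between the two spoke-endpoints then has length $(\text{even})+1+(\text{even})$, so $Q$ has odd length $2+(\text{odd})$. Building $Q'$ symmetrically from $e_r,e_{r+1}$ across the edge $v_ru_{r+1}$ makes $Q'$ odd as well. The hard part will be verifying that $Q$ and $Q'$ meet only at $h$: here I would invoke the hypothesis that $e_0,e_1,e_r,e_{r+1}$ occur in this cyclic order in the planar embedding, which translates into the corresponding cyclic order of the spoke-endpoints along the rim, so that the rim-arc of $Q$ (confined to $H_0\cup H_1$) and that of $Q'$ (confined to $H_r\cup H_{r+1}$) are internally disjoint. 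The point demanding genuine care is the boundary situation in which the two pairs of blocks abut or coincide --- for instance when $r=1$, so that $e_1$ and $e_r$ are distinct spokes into the \emph{same} block $H_1$ --- and there I would use the cyclic order of $e_1$ and $e_r$ to fix the relative positions of their endpoints along $P_1$, ensuring that the two sub-paths of $P_1$ used by $Q$ and $Q'$ do not overlap. Once disjointness is settled in these cases, the parity computations recorded above complete the proof.
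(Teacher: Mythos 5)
Your constructions coincide with the paper's: part (i) uses the same two triangles through $h$, part (ii) the same within-block even cycles (the paper merely notes that two adjacent edges of $H_i$ lie on a common even cycle), and part (iii) the same cycle $hw_0(H_0-h)v_0u_1(H_1-h)w_1h$ and its analogue for $e_r,e_{r+1}$, with the identical parity argument via the single $E_3$-edge. The proposal is correct and, if anything, more explicit than the paper about why the cyclic order of $e_0,e_1,e_r,e_{r+1}$ forces the two cycles in (iii) to meet only at $h$ when the block pairs overlap.
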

\begin{proof} The reader may easily verify statement (i), whereas statement~(ii) follows from the fact that any two adjacent edges (in $H_i$; likewise, in $H_r$) belong to an even cycle.

We now display an odd osculating bicycle $(Q,Q^{'})$ for statement (iii). For each $i \in \{0,1,r,r+1\}$, we let $w_i$ denote the end of $e_i$ that is distinct from $h$. We define them as follows: $Q:=hw_0(H_0-h)v_0u_1(H_1-h)w_1h$ and $Q^{'}:=hw_r(H_r-h)v_ru_{r+1}(H_{r+1}-h)w_{r+1}h$; see Figure~\ref{fig:Illustraton of proposition g_1 noncubic}. Both cycles use precisely one edge from the set $E_3$; thus, they are odd cycles.
\end{proof}

\subsection{Double \bw s \texorpdfstring{$\mathcal{G}_{2}$}{}}
\label{sec:Double-half-biwheel}




This section describes another family of graphs denoted as $\mathcal{G}_2$. Each member of $\mathcal{G}_2$, say $J$, is constructed from the disjoint union of two \bw s neither of which is isomorphic to~$K_2$, say $H_0$ and $H_1$, by adding the following four edges:
$\alpha_0:=h_1h_0$, $\beta_0:=v_1v_0$, $\alpha_1:=u_0h_1$, and  $\beta_1:=h_0u_1$. Figure~\ref{fig:a member of g_2} depicts an example where the \bw s $H_0$ and $H_1$ are shown in blue. (The reader may verify that if any of $H_0$ and $H_1$ is allowed to be isomorphic to $K_2$, then $J$ is a generalized wheel.)

\begin{figure}[!htb]
    \centering
    \begin{tikzpicture}[every node/.style={draw=black, circle,scale=0.66}, scale=0.45]
           
            \node[label=below:{$h_0$}] (a1) at (5.5,0) {};
            \node[label=above:{$v_0$}] (a2) at (2.5,4) {};
            \node[label=above:{$ $}] (a3) at (4,4) {};
            \node[label=above:{$ $}] (a4) at (5.5,4) {};
            \node[label=above:{$ $}] (a5) at (7,4) {};
            \node[label=above:{$u_0$}] (a6) at (8.5,4) {};
            \node[label=above:{$h_1$}] (a7) at (13.5,4) {};
            \node[label=below:{$u_1$}] (a8) at (12,0) {};
            \node[label=below:{$ $}] (a9) at (13.5,0) {};
            \node[label=below:{$ $}] (a10) at (15,0) {};
            \node[label=below:{$ $}] (a11) at (16.5,0) {};
            \node[label=below:{$ $}] (a12) at (18,0) {};
            \node[label=below:{$ $}] (a13) at (19.5,0) {};
            \node[label=below:{$v_1$}] (a14) at (21,0) {};
            
            \draw[thick] (a8) -- (a1);
            \draw[thick] (a1) -- (a7);
            \draw[thick] (a7) -- (a6);
            \draw[thick] (a2) to[in=230,out=250] (a14);
            \draw[blue] (a1) -- (a2);
            \draw[blue] (a2) -- (a3) -- (a4) -- (a5) -- (a6) -- (a1) -- (a4);
            \draw[blue] (a7) -- (a8) -- (a9) -- (a10);
            \draw[blue] (a10) -- (a11) -- (a12) -- (a13) -- (a14);
            \draw[blue] (a12) -- (a7);
            \draw[blue] (a7) -- (a10) ;
            \draw[blue] (a7) -- (a14);
    \end{tikzpicture}
    \caption{a double \bw}
    \label{fig:a member of g_2}
\end{figure}

The reader may observe that, in the above definition, the \bw s $H_0$~and~$H_1$ are interchangeable and that $h_0$ and $h_1$ are the only vertices of degree four or more. Each member of $\mathcal{G}_2$, say $J$ is a near-bipartite planar (matching covered) graph. Furthermore, $R_0=\{\alpha_0,\beta_0\}$ and $R_1=\{\alpha_1,\beta_1\}$ are the only removable doubletons, and the components of $J-R_0-R_1$ are precisely the \bw s $H_0$ and $H_1$. Using ideas similar to the ones in the proof of Proposition~\ref{prop:G_0-ce}, the reader may verify the following.


\begin{prop}
\label{prop:G_2-ce}
    Each member of $ \mathcal{G}_{2}$ is \ce.\qed
\end{prop}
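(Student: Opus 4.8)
The plan is to transcribe the structure of the proof of Proposition~\ref{prop:G_0-ce} almost verbatim, exploiting the fact that a double \bw\ splits cleanly into its two \bw\ components across the removable doubletons. First I would record the cut structure: since all four added edges $\alpha_0=h_1h_0$, $\beta_0=v_1v_0$, $\alpha_1=u_0h_1$ and $\beta_1=h_0u_1$ join $V(H_0)$ to $V(H_1)$, we have $\partial(V(H_0))=\partial(V(H_1))=R_0\cup R_1$. I would then fix an arbitrary even cycle $C$ of $J$ and set $C_0:=C\cap H_0$ and $C_1:=C\cap H_1$. The goal is to show that each $C_i$ is a conformal subgraph of $H_i$; then Lemma~\ref{lem:conformality-intrasubgraph}, applied with the partition $(V(H_0),V(H_1))$ and with $C$ in the role of $L$, immediately gives that $C$ is conformal in $J$, whence $J$ is \ce.

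The core of the argument is a four-way case analysis driven by Lemma~\ref{lem:4.4}. Because $J$ is near-bipartite with removable doubletons $R_0,R_1$, Lemma~\ref{lem:4.4} forces $|C\cap R_0|,|C\cap R_1|\in\{0,2\}$, so that $C\cap\partial(V(H_0))\in\{\emptyset,R_0,R_1,R_0\cup R_1\}$. When the intersection is empty, $C$ lies entirely in one of $H_0,H_1$ (these are vertex-disjoint and every edge between them is a cut edge), so one $C_i$ is an even cycle of a \bw\ --- conformal by Proposition~\ref{prop:corner-hub-path-conformal} --- and the other is the null subgraph, which is trivially conformal. When the intersection is exactly $R_0$ or exactly $R_1$, the two active cut edges attach to $H_0$ at its hub and one of its corners (and likewise to $H_1$); for instance, $R_0$ meets $H_0$ at $h_0$ via $\alpha_0$ and at $v_0$ via $\beta_0$. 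Hence each $C_i$ is a single hub-to-corner path, again conformal by Proposition~\ref{prop:corner-hub-path-conformal}.

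The remaining case $C\cap\partial(V(H_0))=R_0\cup R_1$ is where I expect the only genuine subtlety. Here I would observe that the two cut edges incident with the hub $h_0$, namely $\alpha_0=h_1h_0$ and $\beta_1=h_0u_1$, both lie in $C$; since $C$ meets $h_0$ in exactly two edges, these must be those two, so $h_0$ is a transit vertex and no edge of $H_0$ in $C$ is incident with it. The only remaining attachment points in $H_0$ are the corners $u_0$ (via $\alpha_1$) and $v_0$ (via $\beta_0$), and the arc of $C$ joining them avoids $h_0$, hence lies in $H_0-h_0$, which is precisely the defining even path with ends $u_0,v_0$. As the only $u_0v_0$-path in a path graph is the whole path, this arc exhausts $H_0-h_0$; therefore $C_0$ spans $V(H_0)$, and $H_0-V(C_0)$ is empty and thus vacuously matchable. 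The symmetric argument at $h_1$ (whose two incident cut edges are $\alpha_0$ and $\alpha_1$) shows $C_1$ spans $V(H_1)$. Both $C_0$ and $C_1$ are conformal, completing the case and, with Lemma~\ref{lem:conformality-intrasubgraph}, the proof. The main obstacle is making this spanning conclusion rigorous --- justifying that each hub is forced to be a transit vertex and that the surviving corner-to-corner arc necessarily exhausts the path; everything else is a direct adaptation of the proof of Proposition~\ref{prop:G_0-ce}.
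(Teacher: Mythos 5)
Your proposal is correct and is precisely the argument the paper intends: the paper omits the proof of Proposition~\ref{prop:G_2-ce}, stating only that it follows by ``ideas similar to the ones in the proof of Proposition~\ref{prop:G_0-ce}'', and your write-up is a faithful and complete instantiation of that adaptation, including the correct handling of the case $C\cap\partial(V(H_0))=R_0\cup R_1$ via the hubs being transit vertices. No gaps.
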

    

 As usual, we make an observation concerning the existence of even cycles in double \bw s containing a specified cubic vertex but not containing a specified neighbor. We leave its proof as an exercise for the reader.

\begin{prop}{\sc [Even Cycles in Double Half Biwheels]}\newline
\label{prop:G_2_cubic_vertex}
    Let $J \in \mathcal{G}_2$ and let $R_0,R_1$ denote the removable doubletons of~$J$.
    Let $x$ denote a cubic vertex and let $H_1$ denote the (\bw) component of $J-R_0-R_1$ that contains $x$. Let $w$ denote a neighbor of $x$ in~$J$ so that (i) either $w \notin V(H_1)$ or (ii)~$w \in V(H_1)$ and $d_{J}(w)=2$. Then, $J-w$ has an even cycle that contains $x$.\qed
\end{prop}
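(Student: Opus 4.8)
The plan is to adopt the notation of the definition of $\mathcal{G}_2$ and to write the defining even path of the \bw~$H_1$ as $u_1=a_0,b_1,a_1,b_2,\ldots,b_m,a_m=v_1$, where $a_0,\ldots,a_m \in A_1$ and $b_1,\ldots,b_m \in B_1$, and the hub $h_1$ is joined to every $a_j$. Since $H_1 \neq K_2$ we have $m \geq 1$. A short degree count shows that, within $J$, the cubic vertices of $H_1$ are precisely the internal $A$-vertices $a_1,\ldots,a_{m-1}$ (each already of degree three in $H_1$) together with the two corners $u_1=a_0$ and $v_1=a_m$ (each of degree two in $H_1$, gaining one of the added edges $\beta_1=h_0u_1$ or $\beta_0=v_1v_0$); the vertices $b_1,\ldots,b_m$ have degree two and the hub $h_1$ has degree $|A_1|+2 \geq 4$. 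Hence the hypothesis on $w$ forces $w$ to be either a path-neighbour $b_i$ of $x$, or---only when $x$ is a corner---a vertex of $H_0$. First I would dispose of the case where $x=a_i$ is an internal $A$-vertex: then $w$ must be one of the $B$-neighbours $b_i,b_{i+1}$, and the four-cycle $h_1\,a_i\,b_{i+1}\,a_{i+1}\,h_1$ (respectively $h_1\,a_i\,b_i\,a_{i-1}\,h_1$) is an even cycle lying inside $H_1$ that passes through $x$ and avoids $w$; the required spokes exist because $h_1$ is adjacent to every $a_j$, and the vertex $a_{i+1}$ (respectively $a_{i-1}$) exists since $1 \leq i \leq m-1$.

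It remains to treat the case where $x$ is a corner; the two corners are handled identically, using the pairs of added edges $\{\alpha_1,\beta_1\}$ and $\{\alpha_0,\beta_0\}$ respectively, so I would take $x=u_1$. If $w \in V(H_0)$ (so $w=h_0$, the end of $\beta_1$ distinct from $u_1$), then an even cycle through $u_1$ avoiding $w$ can be kept entirely inside $H_1$, namely $u_1\,b_1\,a_1\,h_1\,u_1$. The genuinely different case---and the one I expect to be the main obstacle---is $w=b_1$, the path-neighbour of $u_1$. After deleting $b_1$, the vertex $u_1$ retains exactly the two edges $h_1u_1$ and $\beta_1=h_0u_1$, so every cycle through $u_1$ is forced to leave $H_1$ via $\beta_1$; no cycle inside $H_1$ can work, and the construction must route through the second \bw~$H_0$. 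Here I would use the even four-cycle $u_1\,h_0\,u_0\,h_1\,u_1$, whose edges are $\beta_1=u_1h_0$, the spoke $h_0u_0$ of $H_0$, the added edge $\alpha_1=u_0h_1$, and the spoke $h_1u_1$ of $H_1$; its four vertices are distinct, it has length four (hence is even), it contains $x=u_1$, and it avoids $w=b_1$. The symmetric corner $x=v_1$ with $w=b_m$ is handled by the cycle $v_1\,v_0\,h_0\,h_1\,v_1$, using $\beta_0$, $\alpha_0$ and the corresponding spokes, while $x=v_1$ with $w=v_0$ uses the within-$H_1$ cycle $v_1\,b_m\,a_{m-1}\,h_1\,v_1$.

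In every case the exhibited cycle has length four, so evenness is immediate and no parity bookkeeping is required; the only things to verify are the existence of the spokes (guaranteed since each hub is adjacent to all vertices of its $A$-class, including the corners) and that the four named vertices are pairwise distinct (clear, since they are hubs and corners of the two distinct \bw s). The one conceptual point I would emphasise is that deleting the path-neighbour of a corner strands that corner with only its hub-spoke and its cross-edge, which is exactly why the cycle must detour through $H_0$; this is the sole case in which the two \bw s genuinely interact, and it is the direct analogue of the routing used in the proof of Proposition~\ref{prop:G_0_cubic_vertex} for generalized prisms.
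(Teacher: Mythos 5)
Your proof is correct and complete. Note that the paper itself gives no argument for Proposition~\ref{prop:G_2_cubic_vertex} --- it is explicitly left as an exercise for the reader --- so there is no authorial proof to compare against; your case analysis (internal $A$-vertices versus the two corners, with the corner-whose-path-neighbour-is-deleted case routed through $H_0$ via $\{\beta_1,\alpha_1\}$ or $\{\beta_0,\alpha_0\}$) exhausts all cubic vertices and all admissible choices of $w$, and each exhibited $4$-cycle uses only edges guaranteed by the definition of $\mathcal{G}_2$.
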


We now proceed to state two observations pertaining to the existence of osculating bicycles; the first of these deals with vertices of degree four or more.

\begin{prop}{\sc [Osculating Bicycles in Double Half Biwheels~-~I]}\newline
\label{prop:g_2_osculating_bicycle_1}
    Let $J \in \mathcal{G}_2$ and let $x$ denote a vertex of degree four or more. Let $\alpha_0$ and~$\alpha_1$ denote removable doubleton edges in $\partial(x)$ and let $f, f^{'} \in \partial(x) - \{\alpha_0, \alpha_1\}$ so that $\alpha_0, \alpha_1, f^{'},f$ appear in this cyclic order in the planar embedding of $J$. Then there exists an odd osculating bicycle $(Q,Q^{'})$ such that $\alpha_0,f \in E(Q)$ and $\alpha_1,f^{'} \in E(Q^{'})$.
\end{prop}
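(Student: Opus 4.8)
The plan is to reduce to a single case by symmetry and then exhibit the two odd cycles explicitly, exactly in the spirit of the proof of Proposition~\ref{prop:g_0_osculating_bicycle}. Recall that the only vertices of degree four or more are the hubs $h_0$ and $h_1$, and that the construction of a member of $\mathcal{G}_2$ is symmetric under interchanging $H_0$ and $H_1$: relabelling $0 \leftrightarrow 1$ fixes $\alpha_0 = h_1h_0$ and $\beta_0 = v_1v_0$ while swapping $\alpha_1 = u_0h_1$ with $\beta_1 = h_0u_1$. Hence I may assume $x = h_1$, in which case the two removable doubleton edges incident with $x$ are precisely $\alpha_0 = h_1h_0$ (from $R_0$) and $\alpha_1 = h_1u_0$ (from $R_1$), so the generic labels in the statement match the edges of the same name. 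Writing the base path $P_1 = H_1 - h_1$ of the \bw~$H_1$ as $u_1 = p_0, p_1, \ldots, p_{2m} = v_1$, the edges of $\partial(h_1)$ other than $\alpha_0,\alpha_1$ are exactly the spokes $h_1p_{2\ell}$ ($0 \le \ell \le m$); thus $f = h_1p_{2j}$ and $f' = h_1p_{2i}$ are two such spokes.

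Next I would read off from the (unique) planar embedding that the spokes appear around $h_1$ in the path order $p_0, p_2, \ldots, p_{2m}$, with $\alpha_0$ and $\alpha_1$ lying to the outside. The hypothesised cyclic order $\alpha_0, \alpha_1, f', f$ then forces $f'$ to be the spoke nearer the $v_1$-end and $f$ the spoke nearer the $u_1$-end; concretely $j < i$. This is the only place where the cyclic hypothesis enters, and matching ``cyclic position around $h_1$'' to ``index along $P_1$'' is the main bookkeeping point; once it is settled, the rest is a routine parity count.

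With $j < i$ in hand I would display the bicycle directly. Set
\[
Q := h_1\,\alpha_0\,h_0\,\beta_1\,u_1\,(p_0 p_1 \cdots p_{2j})\,f\,h_1,
\]
so $Q$ leaves $h_1$ along $\alpha_0$ to $h_0$, crosses to $u_1 = p_0$ along $\beta_1$, runs along $P_1$ to $p_{2j}$, and returns to $h_1$ along $f$; and set
\[
Q' := h_1\,\alpha_1\,u_0\,(P_0)\,v_0\,\beta_0\,v_1\,(p_{2m} \cdots p_{2i})\,f'\,h_1,
\]
so $Q'$ leaves $h_1$ along $\alpha_1$ to $u_0$, traverses the base path $P_0 = H_0 - h_0$ from corner $u_0$ to corner $v_0$, crosses to $v_1 = p_{2m}$ along $\beta_0$, runs back along $P_1$ to $p_{2i}$, and returns along $f'$.

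Finally I would verify the two required properties. For parity: $Q$ consists of the edges $\alpha_0,\beta_1$, the subpath $p_0 \cdots p_{2j}$ of length $2j$, and the edge $f$, for a total of $2j+3$ edges, which is odd; similarly $Q'$ has $1 + \ell(P_0) + 1 + (2m - 2i) + 1$ edges, which is odd because $\ell(P_0)$ is even by definition of a \bw. For the single-vertex intersection: inside $H_0$ the cycle $Q$ meets only the hub $h_0$ whereas $Q'$ meets only $P_0 = H_0 - h_0$, and inside $H_1$ the cycle $Q$ uses $p_0, \ldots, p_{2j}$ whereas $Q'$ uses $p_{2i}, \ldots, p_{2m}$; since $j < i$ these vertex sets are disjoint, so $Q$ and $Q'$ share exactly the vertex $h_1$. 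As both cycles are odd, $(Q,Q')$ is an odd osculating bicycle with $\alpha_0, f \in E(Q)$ and $\alpha_1, f' \in E(Q')$, as required.
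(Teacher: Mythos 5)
Your proof is correct and follows essentially the same route as the paper: the two cycles you exhibit are exactly the paper's $Q:=h_1fw(H_1-h_1)u_1\beta_1h_0\alpha_0h_1$ and $Q^{'}:=h_1f^{'}w^{'}(H_1-h_1)v_1\beta_0v_0(H_0-h_0)u_0\alpha_1h_1$, merely traversed in the opposite direction and with explicit indices along the base path of $H_1$. The only cosmetic difference is that you check oddness by counting edges directly, whereas the paper observes that each cycle meets each removable doubleton in exactly one edge; your explicit verification of the symmetry reduction to $x=h_1$ and of the single-vertex intersection is a welcome bit of extra care that the paper delegates to its figure.
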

\begin{proof}
We adopt notation from the definition of double \bw s; thus $x=h_1$. Let $f:=h_1w$ and $f^{'}:=h_1w^{'}$.
We display an odd osculating bicycle $(Q,Q^{'})$ whose each constituent cycle uses precisely one edge from each of the two removable doubletons of $J$ as follows: $Q:=h_1fw(H_1 - h_1)u_1\beta_1h_0\alpha_0h_1$ and $Q^{'}:=h_1f^{'}w^{'}(H_1-h_1)v_1\beta_0v_0(H_0-h_0)u_0\alpha_1h_1$. Figure~\ref{fig:Illustation of Propositions- G_2a} shows an illustration.
\end{proof}

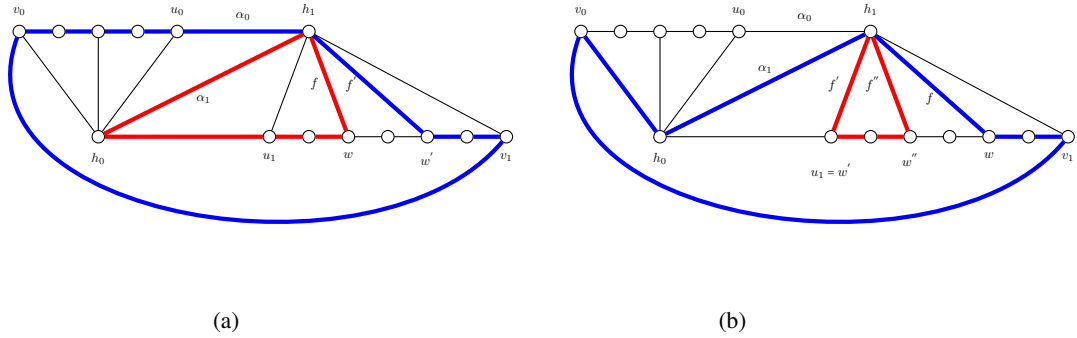
\begin{figure}[!htb]
    \hspace{-20px}
     \begin{subfigure}{0.5\textwidth}
        \centering
        
        \begin{tikzpicture}[every node/.style={draw=black, circle,scale=0.5}, scale=0.35]
               
                 \node[label=below:{$h_0$}] (a1) at (5.5,0) {};
                \node[label=above:{$v_0$}] (a2) at (2.5,4) {};
                \node[label=above:{$ $}] (a3) at (4,4) {};
                \node[label=above:{$ $}] (a4) at (5.5,4) {};
                \node[label=above:{$ $}] (a5) at (7,4) {};
                \node[label=above:{$u_0$}] (a6) at (8.5,4) {};
                \node[label=above:{$h_1$}] (a7) at (13.5,4) {};
                \node[label=below:{$u_1$}] (a8) at (12,0) {};
                \node[label=below:{$ $}] (a9) at (13.5,0) {};
                \node[label=below:{$w$}] (a10) at (15,0) {};
                \node[label=below:{$ $}] (a11) at (16.5,0) {};
                \node[label=below:{$w^{'}$}] (a12) at (18,0) {};
                \node[label=below:{$ $}] (a13) at (19.5,0) {};
                \node[label=below:{$v_1$}] (a14) at (21,0) {};
                \draw[ultra thick,red] (a8) -- (a1);
                \draw[ultra thick,red] (a1) --node[below,draw=none,black]{$\alpha_0$} (a7);
                \draw[ultra thick,blue] (a7) --node[above,draw=none,black]{$\alpha_1$} (a6);
                \draw[ultra thick,blue] (a2) to[in=230,out=250] (a14);
                \draw (a1) -- (a2);
                \draw[ultra thick,blue] (a2) -- (a3) -- (a4) -- (a5) -- (a6);
                \draw (a6) -- (a1) -- (a4);
                \draw[ultra thick,red] (a7) --node[left,draw=none,black]{$f$} (a10);
                \draw[ultra thick,red] (a8) -- (a9);
                \draw[ultra thick,red] (a9) -- (a10);
                \draw (a10) -- (a11) -- (a12) ;
                \draw (a7) -- (a8);
                \draw (a7) -- (a14);
                \draw[ultra thick,blue] (a12) --node[left,draw=none,black]{$f^{'}$} (a7) ;
                \draw[ultra thick,blue] (a12) -- (a13) -- (a14) ;
                
        \end{tikzpicture}
     
     \caption{}
    \label{fig:Illustation of Propositions- G_2a}
    \end{subfigure}%
     \
     \begin{subfigure}{0.4\textwidth}
        \centering
       \begin{tikzpicture}[every node/.style={draw=black, circle,scale=0.5}, scale=0.35]
               
                \node[label=below:{$h_0$}] (a1) at (5.5,0) {};
                \node[label=above:{$v_0$}] (a2) at (2.5,4) {};
                \node[label=above:{$ $}] (a3) at (4,4) {};
                \node[label=above:{$ $}] (a4) at (5.5,4) {};
                \node[label=above:{$ $}] (a5) at (7,4) {};
                \node[label=above:{$u_0$}] (a6) at (8.5,4) {};
                \node[label=above:{$h_1$}] (a7) at (13.5,4) {};
                \node[label=below:{$u_1=w^{'}$}] (a8) at (12,0) {};
                \node[label=below:{$ $}] (a9) at (13.5,0) {};
                \node[label=below:{$w^{''}$}] (a10) at (15,0) {};
                \node[label=below:{$ $}] (a11) at (16.5,0) {};
                \node[label=below:{$w$}] (a12) at (18,0) {};
                \node[label=below:{$ $}] (a13) at (19.5,0) {};
                \node[label=below:{$v_1$}] (a14) at (21,0) {};
                \draw (a8) -- (a1);
                \draw[ultra thick,blue] (a1) --node[above,draw=none,black]{$\alpha_0$} (a7);
                \draw (a7) --node[above,draw=none,black]{$\alpha_1$} (a6);
                \draw[ultra thick,blue] (a2) to[in=230,out=250] (a14);
                \draw[ultra thick,blue] (a1) -- (a2);
                \draw (a2) -- (a3) -- (a4) -- (a5) -- (a6) -- (a1) -- (a4);
                \draw[ultra thick,red] (a7) --node[left,draw=none,black]{$f^{'}$} (a8) -- (a9) -- (a10);
                \draw (a10) -- (a11) -- (a12);
                \draw[ultra thick,blue] (a12) -- (a13) -- (a14);
                \draw (a7) -- (a14);
                \draw[ultra thick,blue] (a12) --node[below,draw=none,black]{$f$} (a7);
                \draw[ultra thick,red] (a7) --node[left,draw=none,black]{$f^{''}$} (a10) ;
                
        \end{tikzpicture}
    \caption{}
    \label{fig:Illustation of Propositions- G_2b}
    \end{subfigure}%
    \caption{illustration for the proofs of Propositions~\ref{prop:g_2_osculating_bicycle_1} and \ref{prop:g_2_osculating_bicycle_2}}
    \label{fig:Illustation of Propositions- G_2}
\end{figure}
The next proposition deals with vertices of degree five or more.

\begin{prop}{\sc [Osculating Bicycles in Double Half Biwheels~-~II]}\newline
\label{prop:g_2_osculating_bicycle_2}
    Let $J \in \mathcal{G}_2$ and let $x$ denote a vertex of degree five or more. Let $\alpha_0$ and~$\alpha_1$ denote removable doubleton edges in $\partial(x)$ and let $f, f^{'},f^{''} \in \partial(x) - \{\alpha_0, \alpha_1\}$ so that $\alpha_1,\alpha_0, f^{'},f^{''},f$ appear in this cyclic order in the planar embedding of $J$. Then there exists an even osculating bicycle $(Q,Q^{'})$ such that $\alpha_0,f \in E(Q)$ and $f^{'},f^{''} \in E(Q^{'})$.
\end{prop}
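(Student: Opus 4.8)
The plan is to construct the required even osculating bicycle $(Q,Q')$ explicitly, following the same blueprint used in the proof of Proposition~\ref{prop:g_2_osculating_bicycle_1}, but now exploiting the extra edge available because $x=h_1$ has degree five or more. I would first adopt the notation from the definition of double \bw s, so that $x=h_1$, and write $f:=h_1w$, $f':=h_1w'$, and $f'':=h_1w''$, where $w,w',w''$ are the neighbors of $h_1$ along the corner-to-corner path of the \bw~$H_1$. The cyclic order hypothesis $\alpha_1,\alpha_0,f',f'',f$ tells me exactly how these edges sit around $h_1$ in the unique planar embedding; crucially, $f'$ and $f''$ are consecutive (no $\alpha$-edge separates them), which is what will let me route a single cycle $Q'$ through both of them while keeping it disjoint from $Q$ except at $h_1$.

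For the cycle $Q$ containing $\alpha_0$ and $f$, I would reuse essentially the route from the proof of Proposition~\ref{prop:g_2_osculating_bicycle_1}: start at $h_1$, take $f$ into $H_1$, traverse the portion of $H_1$ from $w$ out to a corner (say $u_1$), cross the removable doubleton edge $\beta_1=h_0u_1$ to reach $h_0$, and return via $\alpha_0=h_0h_1$ to $h_1$. For the cycle $Q'$ containing $f'$ and $f''$, the key observation is that any two edges of a \bw~incident with the hub, here $h_1f'=h_1w'$ and $h_1f''=h_1w''$, together with the corner-to-corner path segment of $H_1$ lying between $w'$ and $w''$, already close up into a cycle lying entirely inside $H_1$. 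Since $f'$ and $f''$ are cyclically consecutive at $h_1$, this segment of $H_1$ is internally disjoint from the segment used by $Q$, so $Q$ and $Q'$ meet only at $h_1$. Finally I must verify the parities: I would count edges so that both $Q$ and $Q'$ come out even, using the fact that each ear/path of a \bw~from the hub to a corner is conformal (Proposition~\ref{prop:corner-hub-path-conformal}) and that $R_0=\{\alpha_0,\beta_0\}$, $R_1=\{\alpha_1,\beta_1\}$ are removable doubletons, so each cycle picks up the correct (even) number of doubleton edges. By Lemma~\ref{lem:4.4}, an even cycle meets each removable doubleton in $0$ or $2$ edges, which gives a clean parity check: $Q'$ lives inside the single \bw~$H_1$ and uses zero doubleton edges, while $Q$ uses exactly the two edges $\alpha_0$ and $\beta_1$, one from each doubleton except arranged so the total count keeps $Q$ even.

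The main obstacle I anticipate is the bookkeeping that guarantees $Q'$ is \emph{even} rather than odd, and simultaneously that it stays inside $H_1$. The cycle formed by two hub-edges of a \bw~plus the intervening corner-path is even precisely because a \bw~is bipartite with the hub and corners on the same color class; any closed walk using two edges at the hub and an even path between their far endpoints has even length. I would make this rigorous by invoking that $H_1$ is a bipartite \mcg~(Proposition~\ref{prop:corner-hub-path-conformal}) so that \emph{every} cycle of $H_1$ is even; since $Q'$ is by construction a cycle of $H_1$, it is automatically even. This sidesteps an explicit length computation entirely. The remaining care is purely topological: I must confirm, using the planar embedding and the given cyclic order, that the $H_1$-segment realizing $Q'$ does not overlap the $H_1$-segment used by $Q$ — and here the hypothesis that $x$ has degree \emph{five} or more (providing three distinct edges $f,f',f''$ at $h_1$ beyond the two doubleton edges) is exactly what creates enough room. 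I would close the argument by noting $(Q,Q')$ intersect in the single vertex $h_1$ and are both even, hence form an even osculating bicycle, and then provide an accompanying figure (as in Figure~\ref{fig:Illustation of Propositions- G_2b}) to exhibit the two cycles.
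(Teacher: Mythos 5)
Your cycle $Q'$ coincides with the paper's and is fine, but your cycle $Q$ is broken in two independent ways. You route $Q$ along $f$ from $h_1$ to $w$, then along $H_1-h_1$ from $w$ to the corner $u_1$, then across $\beta_1=h_0u_1$ to $h_0$, and back via $\alpha_0=h_0h_1$. This cycle meets the removable doubleton $R_0=\{\alpha_0,\beta_0\}$ in exactly one edge (namely $\alpha_0$) and $R_1=\{\alpha_1,\beta_1\}$ in exactly one edge (namely $\beta_1$); by Lemma~\ref{lem:4.4} --- which you yourself invoke --- such a cycle cannot be even. Indeed, in the bipartition of $J-R_0$ every edge of $H_0\cup H_1$ and both of $\alpha_1,\beta_1$ are bichromatic while $\alpha_0$ is monochromatic, so your $Q$ contains exactly one monochromatic edge and is therefore odd. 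Your closing remark that the doubleton edges are ``arranged so the total count keeps $Q$ even'' is precisely where the argument fails: one edge from each of the two doubletons forces an odd cycle. The route you copied is the one from Proposition~\ref{prop:g_2_osculating_bicycle_1}, whose whole point is to produce an \emph{odd} osculating bicycle; it cannot be reused here.

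Second, even setting parity aside, your $Q$ is not disjoint from $Q'$ away from $h_1$. The hypothesis that $\alpha_1,\alpha_0,f',f'',f$ appear in this cyclic order at $h_1$ means, in the planar embedding, that $w'$ and $w''$ lie on the corner-to-corner path of $H_1$ strictly between $u_1$ and $w$. Hence the segment of $H_1-h_1$ from $w$ to $u_1$ traversed by your $Q$ passes through both $w'$ and $w''$, which also lie on $Q'$. The paper's construction avoids both defects by sending $Q$ the other way: from $w$ along $H_1-h_1$ to the corner $v_1$, across $\beta_0=v_1v_0$ into $H_0$, from $v_0$ to $h_0$ inside $H_0$, and back via $\alpha_0$. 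That cycle uses both edges of $R_0$ and no edge of $R_1$ (hence is even by Lemma~\ref{lem:4.4}) and stays clear of $w'$ and $w''$.
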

\begin{proof}
We adopt notation from the definition of double \bw s; thus $x=h_1$. Let $f:=h_1w$, $f^{'}:=h_1w^{'}$ and $f^{''}:=h_1w^{''}$. We display an even osculating bicycle $(Q,Q^{'})$ whose each constituent cycle meets each of the two removable doubletons in zero or two edges as follows: $Q:=h_1fw(H_1-h_1)v_1\beta_0v_0h_0\alpha_0h_1$ and $Q^{'}:=h_1f^{'}w^{'}(H_1 - h_1)w^{''}f^{''}h_1$. See Figure~\ref{fig:Illustation of Propositions- G_2b} for an example.
\end{proof}

\subsection{Hexagon \bw s \texorpdfstring{$\mathcal{G}_{3}$}{}}
\label{sec:Hexagon-half-biwheel}
This section introduces our last family of graphs denoted as $\mathcal{G}_3$. A member of $\mathcal{G}_3$, say $J$, is obtained from the disjoint union of a hexagon (that is, a $6$-cycle) $H_0:=a_0a_1a_2a_3a_4a_5a_0$ and a \bw~$H_1$ by adding the following edges: $\alpha_0:=h_1a_4$, $\beta_0:=v_1a_1$, $\alpha_1:=a_3h_1$ and $\beta_1:=a_0u_1$. Figure~\ref{fig:a member of g_3} depicts two examples where $H_0$ and $H_1$ are shown in blue. Observe that if $H_1$ is isomorphic to $K_2$ then $J$ is the graph $R_8^{-}$.

\begin{figure}[!htb]
    \centering
     \begin{subfigure}{0.5\textwidth}
        \centering
      \begin{tikzpicture}[every node/.style={draw=black, circle,scale=0.7}, scale=0.5]
           
            \node[label=left:{$a_0$}] (a1) at (0,4) {};
            \node[label=below:{$u_1$}] (a2) at (1.5,4) {};
            \node[label=below:{$ $}] (a3) at (3,4) {};
            \node[label=below:{$ $}] (a4) at (4.5,4) {};
            \node[label=below:{$ $}] (a5) at (6,4) {};
            \node[label=below:{$v_1$}] (a8) at (7.5,4) {};
            \node[label=right:{$a_1$}] (a9) at (9,4) {};

            \node[label=left:{$a_4$}] (a10) at (0,8) {};
            \node[label=above:{$h_1$}] (a11) at (4.5,8) {};
            \node[label=right :{$a_3$}] (a12) at (9,8) {};

            \node[label=left:{$a_5$}] (a13) at (0,6){};
            \node[label=right:{$a_2$}] (a14) at (9,6){};
            \draw[blue] (a1) -- (a13) -- (a10);
            \draw[blue] (a9) -- (a14) -- (a12);
            \draw[thick] (a10) -- (a11);
            \draw[thick]  (a11) -- (a12);
            \draw[blue] (a1) to[in=240,out=300] (a9);
            \draw[blue] (a10) to[in=120,out=70] (a12);       
            \draw[thick] (a1) -- (a2); \draw[blue] (a2) -- (a3) -- (a4);
            \draw[blue] (a4) -- (a5) -- (a8);
            \draw[thick] (a8) -- (a9);
            \draw[blue] (a11) -- (a2);
            \draw[blue] (a11) -- (a4);
            \draw[blue] (a11) -- (a8);

    \end{tikzpicture}
     \caption{}
    
    \end{subfigure}%
     \begin{subfigure}{0.5\textwidth}
        \centering
      \begin{tikzpicture}[every node/.style={draw=black, circle,scale=0.7}, scale=0.5]
           
            \node[label=left:{$a_0$}] (a1) at (0,4) {};
            \node[] (a2) at (3,4) {};
            \node[draw = none,label=above:{$u=v$}] (a22) at (3,2.1) {};
            \node[label=right:{$a_1$}] (a9) at (6,4) {};

            \node[label=left:{$a_4$}] (a10) at (0,8) {};
            \node[label=above:{$h_1$}] (a11) at (3,8) {};
            \node[label=right :{$a_3$}] (a12) at (6,8) {};

            \node[label=left:{$a_5$}] (a13) at (0,6){};
            \node[label=right:{$a_2$}] (a14) at (6,6){};

            \draw[blue] (a1) -- (a13) -- (a10);
            \draw[blue] (a9) -- (a14) -- (a12);
            \draw[blue] (a2) -- (a11);

            \draw[thick] (a1) -- (a2) -- (a9);
            
            \draw[thick] (a10) -- (a11) -- (a12);
            \draw[blue] (a1) to[in=240,out=300] (a9);
            \draw[blue] (a10) to[in=120,out=70] (a12);

    \end{tikzpicture}
    \caption{}
        \label{fig:R_8-}
    \end{subfigure}%
    \caption{hexagon \bw s}
    \label{fig:a member of g_3}
\end{figure}

The reader may easily verify that each member of $\mathcal{G}_3$, say $J$, is also near-bipartite (\mc) graph and $R_0=\{\alpha_0,\beta_0\}$, $R_1=\{\alpha_1,\beta_1\}$ are the only removable doubletons. Furthermore, the components of $J-R_0-R_1$ are precisely the $6$-cycle~$H_0$ and the \bw~$H_1$. Using ideas similar to the ones in the proof of Proposition~\ref{prop:G_0-ce}, the reader may verify the following.

\begin{prop}
\label{prop:G_3-ce}
    Each member of $ \mathcal{G}_{3}$ is \ce.\qed
\end{prop}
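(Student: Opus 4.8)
The plan is to follow, almost verbatim, the template of the proof of Proposition~\ref{prop:G_0-ce}, exploiting the fact that a member $J$ of $\mathcal{G}_3$ decomposes, upon deletion of its two removable doubletons $R_0=\{\alpha_0,\beta_0\}$ and $R_1=\{\alpha_1,\beta_1\}$, into exactly two pieces: the hexagon $H_0:=a_0a_1a_2a_3a_4a_5a_0$ and the \bw~$H_1$. The only structural novelty relative to the generalized-prism case is that one of the two pieces, namely $H_0$, is a $6$-cycle rather than a \bw; so the bulk of the new work lies in verifying conformality of the hexagonal part directly, while the \bw~part is handled exactly as before.

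First I would fix an arbitrary even cycle $C$ of $J$ and set $C_0:=C\cap H_0$ and $C_1:=C\cap H_1$. Since $J$ is near-bipartite and $H_0,H_1$ are the only two components of $J-R_0-R_1$, we have $\partial(V(H_0))=\partial(V(H_1))=R_0\cup R_1$. By Lemma~\ref{lem:4.4}, $|C\cap R_0|\in\{0,2\}$ and $|C\cap R_1|\in\{0,2\}$, so $C\cap\partial(V(H_0))$ is one of $\emptyset$, $R_0$, $R_1$, or $R_0\cup R_1$. The strategy is to show that in each of these four cases both $C_0$ and $C_1$ are conformal subgraphs of $H_0$ and $H_1$ respectively; Lemma~\ref{lem:conformality-intrasubgraph}, with $C$ in the role of $L$, then yields that $C$ is conformal in $J$, and hence that $J$ is \ce. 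For the \bw~$H_1$ the argument is the one used in Proposition~\ref{prop:G_0-ce}: if $C\cap\partial(V(H_1))=\emptyset$ then $C_1$ is either null or all of $C$ (conformal since $H_1$ is \ce); if it is a single doubleton then $C_1$ is a path of $H_1$ from the hub $h_1$ to a corner, conformal by Proposition~\ref{prop:corner-hub-path-conformal}; and if it is $R_0\cup R_1$ then $h_1$ is incident in $C$ to both $\alpha_0$ and $\alpha_1$, so $C_1$ is the isolated vertex $h_1$ together with the whole path $H_1-h_1$, i.e.\ a spanning subgraph whose complement in $H_1$ is empty and hence matchable.

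The genuinely new step is the hexagon, and here I would isolate the key geometric fact: the attachment points of $R_0$ on $H_0$, namely $a_1$ (the end of $\beta_0$) and $a_4$ (the end of $\alpha_0$), are antipodal on the $6$-cycle, as are the attachment points of $R_1$, namely $a_0$ (the end of $\beta_1$) and $a_3$ (the end of $\alpha_1$). Consequently, whenever $C\cap\partial(V(H_0))$ is a single doubleton, $C_0$ is a path joining two antipodal vertices of the hexagon; such a path has length three and its two uncovered vertices are adjacent, so $H_0-V(C_0)$ is a single edge and $C_0$ is conformal. When $C\cap\partial(V(H_0))=\emptyset$, the subgraph $C_0$ is either null or the entire hexagon, both conformal. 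Finally, when $C\cap\partial(V(H_0))=R_0\cup R_1$, the matching of the four boundary edges into the two $H_0$-arcs of $C$ is forced by the $H_1$-side structure (the trivial arc at $h_1$ pairs $\alpha_0$ with $\alpha_1$, and the path $H_1-h_1$ pairs $\beta_0$ with $\beta_1$): the two arcs must join $a_4$ to $a_0$ and $a_3$ to $a_1$, giving $C_0=a_4a_5a_0\cup a_3a_2a_1$, a spanning subgraph whose complement is empty.

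I expect the only point requiring genuine care to be this final full-boundary case, where one must rule out the alternative pairing that would join the antipodal pairs $a_4,a_1$ and $a_0,a_3$ by two arcs of $H_0$: such a pairing is impossible, since every $a_4a_1$-path in the hexagon passes through one of $a_0,a_3$, which the two arcs cannot share. Antipodality is exactly what makes both the single-doubleton subcase (path of length three with adjacent complement) and the full subcase (forced spanning pair of paths) clean, and everything else is a routine transcription of the $\mathcal{G}_0$ argument.
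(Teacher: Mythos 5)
Your proposal is correct and follows exactly the route the paper intends: the paper gives no details for Proposition~\ref{prop:G_3-ce} beyond the remark that it follows ``using ideas similar to the ones in the proof of Proposition~\ref{prop:G_0-ce}'', and your argument is precisely that template (Lemma~\ref{lem:4.4} restricting $C\cap R_i$, case analysis on $C\cap\partial(V(H_0))$, and Lemma~\ref{lem:conformality-intrasubgraph}), with the hexagon handled correctly via the antipodality of the attachment pairs $\{a_1,a_4\}$ and $\{a_0,a_3\}$. Your treatment of the full-boundary case --- ruling out the $a_4a_1/a_3a_0$ pairing geometrically and the $a_4a_3/a_0a_1$ pairing by single-cycleness --- is sound and fills in the one detail genuinely worth checking.
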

    

We now proceed to make a couple of observations pertaining to the existence of even cycles containing a specified cubic vertex but not containing a specified neighbor; the first of these considers the case in which the cubic vertex belongs to the hexagon. We leave their proofs as exercises for the reader.

\begin{prop}{\sc [Even Cycles in Hexagon Half Biwheels - I]}\newline
\label{prop:G_3_cubic_vertex_six}
    Let $J \in \mathcal{G}_3$ and let $R_0,R_1$ denote the removable doubletons of~$J$.
    Let $H_0$ denote the component of $J-R_0-R_1$ that is isomorphic to a $6$-cycle. Let $x$ denote a cubic vertex such that $x \in V(H_0)$, and $w$ denote any neighbor of $x$. Then $J-w$ has an even cycle that contains $x$.\qed
\end{prop}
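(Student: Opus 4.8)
The plan is to reduce to two representative vertices via an automorphism of $J$, and then to exhibit an explicit even cycle in each remaining subcase by working inside a conveniently chosen bipartite subgraph of $J$.

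First I would pin down the cubic vertices lying on the hexagon. From the incidences $\beta_1=a_0u_1$, $\beta_0=v_1a_1$, $\alpha_1=a_3h_1$ and $\alpha_0=h_1a_4$, exactly the vertices $a_0,a_1,a_3,a_4$ receive one cross edge each and so are cubic, whereas $a_2$ and $a_5$ have degree two. Next I would record a symmetry: the reflection that fixes $h_1$, swaps the corners $u_1\leftrightarrow v_1$ (reversing the defining path of the \bw~$H_1$), and swaps $a_0\leftrightarrow a_1$, $a_2\leftrightarrow a_5$, $a_3\leftrightarrow a_4$ is an automorphism of $J$ (it interchanges $\alpha_0\leftrightarrow\alpha_1$ and $\beta_0\leftrightarrow\beta_1$, hence $R_0\leftrightarrow R_1$). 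Since an automorphism preserves even cycles and carries the neighbourhood of $x$ to that of its image, it suffices to handle the two representatives $x=a_0$ (covering $a_1$) and $x=a_3$ (covering $a_4$).

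The key device that avoids all parity bookkeeping is the following. Since $J$ is near-bipartite with removable doubleton $R_0=\{\alpha_0,\beta_0\}$, and near-bipartite graphs are near-bricks, Corollary~\ref{cor:near-brick-ver-near-bip} shows that $J-R_0$ is bipartite. Consequently \emph{every} cycle of $J$ that uses neither $\alpha_0$ nor $\beta_0$ is automatically even. Thus the entire problem reduces to producing, for each relevant pair $(x,w)$, a cycle through $x$ that avoids $w$ and avoids both $\alpha_0$ and $\beta_0$; its evenness is then free. This is exactly the step where a naive direct argument would get bogged down in checking lengths, so I would foreground it.

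It then remains to list the cycles. When $w$ lies in $H_1$ --- that is, $w=u_1$ for $x=a_0$, and $w=h_1$ for $x=a_3$ --- the hexagon $H_0=a_0a_1a_2a_3a_4a_5a_0$ itself serves: it is a $6$-cycle through $x$ that avoids every vertex of $H_1$, in particular $\alpha_0$ and $\beta_0$. For the four remaining pairs I would close a hexagonal arc through the \bw\ using the hub--corner edge $h_1u_1$ (which exists because $u_1$ is a corner, hence lies in $A$, and $h_1$ is adjacent to all of $A$) together with $\beta_1=a_0u_1$ and $\alpha_1=a_3h_1$. Explicitly, for $x=a_0$ take the $6$-cycle $a_0\,u_1\,h_1\,a_3\,a_2\,a_1\,a_0$ when $w=a_5$, and $a_0\,a_5\,a_4\,a_3\,h_1\,u_1\,a_0$ when $w=a_1$; for $x=a_3$ take $a_3\,a_2\,a_1\,a_0\,u_1\,h_1\,a_3$ when $w=a_4$, and $a_3\,a_4\,a_5\,a_0\,u_1\,h_1\,a_3$ when $w=a_2$. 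Each is a genuine cycle on six distinct vertices of $J$, each omits its prescribed $w$, and none uses $\alpha_0=h_1a_4$ or $\beta_0=v_1a_1$; hence each is even by the observation above. These constructions are uniform in the size of $H_1$ and remain valid when $H_1\cong K_2$ (so $J\cong R_8^{-}$). The only care needed is to verify, in each listed cycle, that $w$ is genuinely absent and that neither edge of $R_0$ appears, which is immediate from the vertex and edge lists; there is no genuine obstacle beyond this routine checking.
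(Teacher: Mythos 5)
Your proof is correct and complete; the paper states this proposition with a \qed{} and explicitly leaves its verification as an exercise, so there is no authorial proof to compare against. Your symmetry reduction to $x\in\{a_0,a_3\}$ is valid, and each of the six exhibited cycles checks out (all edges exist, $w$ is avoided, and the vertices are distinct since $H_0$ and $H_1$ are disjoint and $u_1\neq h_1$ even when $H_1\cong K_2$). One small remark: the near-bipartiteness device ($J-R_0$ bipartite, so any cycle avoiding $\alpha_0$ and $\beta_0$ is even) is sound but superfluous here, since every cycle you list is literally a $6$-cycle --- you only ever borrow the two adjacent vertices $u_1,h_1$ from $H_1$ --- so evenness is immediate.
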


The next proposition considers the case in which the cubic vertex belongs to the \bw.

\begin{prop}{\sc [Even Cycles in Hexagon Half Biwheels - II]}\newline
\label{prop:G_3_cubic_vertex_hw}
    Let $J \in \mathcal{G}_3$ and let $R_0,R_1$ denote the removable doubletons of~$J$.
    Let $H_1$ denote the component of $J-R_0-R_1$ that is a \bw. Let $x$ denote a cubic vertex such that $x \in V(H_1)$ and $w$ denote a neighbor of $x$ in $J$ so that (i) either $w \notin V(H_1)$ or (ii)~$w \in V(H_1)$ and $d_{J}(w)=2$. Then $J-w$ has an even cycle that contains $x$.\qed
\end{prop}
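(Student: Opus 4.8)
The plan is to adopt the notation from the definition of $\mathcal{G}_3$ and to exploit the transparent structure of the \bw~$H_1$. I would write the defining even path of $H_1$ as $P=p_0p_1\cdots p_{2m}$, with corners $u_1=p_0$ and $v_1=p_{2m}$, so that the even-indexed vertices $p_0,p_2,\ldots,p_{2m}$ form the color class adjacent to the hub $h_1$, and $H_1\cong K_2$ precisely when $m=0$. The cubic vertices lying in $V(H_1)$ are then exactly the two corners together with the internal class-$A$ vertices $p_{2j}$ with $0<j<m$; every remaining vertex of $H_1$ is either the hub (of degree $\geq 4$) or an internal class-$B$ vertex (of degree two). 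Since the hypothesis on $w$ excludes $w=h_1$ (because $h_1\in V(H_1)$ but $d_J(h_1)\geq 4$), the first step is to split into the cases ``$x$ internal'' and ``$x$ a corner''.

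For $x=p_{2j}$ an internal class-$A$ vertex, its only neighbours are $h_1,\,p_{2j-1},\,p_{2j+1}$, so any admissible $w$ is one of the two degree-two path-neighbours. Because $0<j<m$, both $p_{2j-2}$ and $p_{2j+2}$ exist, and I would simply exhibit the four-cycle $h_1p_{2j}p_{2j+1}p_{2j+2}h_1$ when $w=p_{2j-1}$, and its mirror $h_1p_{2j-2}p_{2j-1}p_{2j}h_1$ when $w=p_{2j+1}$; each uses two consecutive hub edges, lies entirely inside $H_1$, and is therefore an even cycle through $x$ avoiding $w$.

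For $x$ a corner, I would first invoke the evident automorphism of $J$ --- the reflection of the hexagon fixing $h_1$ and interchanging $a_0\leftrightarrow a_1$, $a_2\leftrightarrow a_5$, $a_3\leftrightarrow a_4$, composed with the path-reversal of $H_1$ (which swaps $\alpha_0\leftrightarrow\alpha_1$ and $\beta_0\leftrightarrow\beta_1$) --- in order to assume $x=u_1$. When $H_1\not\cong K_2$, the neighbours of $u_1$ are $h_1$, the degree-two vertex $p_1$, and the hexagon vertex $a_0$ (via $\beta_1$). If $w=a_0$, the four-cycle $h_1u_1p_1p_2h_1\subseteq H_1$ works; if $w=p_1$, I would route through the hexagon along $u_1a_0a_1a_2a_3h_1u_1$ (using $\beta_1$, three hexagon edges, $\alpha_1$, and a hub edge), a six-cycle through $u_1$ missing $p_1$. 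The degenerate case $H_1\cong K_2$, in which $u_1=v_1$ is joined to both $a_0$ and $a_1$, is handled analogously: for $w=a_0$ take $u_1a_1a_2a_3a_4h_1u_1$, and for $w=a_1$ take $u_1a_0a_5a_4a_3h_1u_1$.

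The only point requiring genuine care is the parity bookkeeping for the hexagon detours: between the attachment points $\{a_0,a_1\}$ reached by the $\beta$-edges and the attachment points $\{a_3,a_4\}$ reached by the $\alpha$-edges, the hexagon offers arcs of both parities, and one must select the arc that makes the closed walk even --- which is exactly why each displayed six-cycle traverses three hexagon edges rather than one. No further obstacle arises, and since every cubic $x\in V(H_1)$ admitting a valid $w$ falls into one of the cases above, the proposition follows.
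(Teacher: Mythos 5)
The paper offers no proof of this proposition (it is explicitly left as an exercise), so the only question is whether your argument is complete. It almost is: the constructions you exhibit are all correct --- the $4$-cycles $h_1p_{2j}p_{2j+1}p_{2j+2}h_1$ and $h_1p_{2j-2}p_{2j-1}p_{2j}h_1$ for internal class-$A$ vertices, the reflection automorphism reducing $v_1$ to $u_1$, and the $6$-cycles through the hexagon with the parity bookkeeping done correctly. However, your opening inventory of the cubic vertices of $V(H_1)$ contains an error that leaves one case uncovered. You assert that the hub always has degree at least four, but $d_J(h_1)=|A|+2=m+3$, which equals $3$ exactly when $H_1\cong K_2$ (so $J=R_8^{-}$). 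In that case $h_1$ is itself a cubic vertex of $V(H_1)$, and your case split ``$x$ internal or $x$ a corner'' never treats $x=h_1$. The admissible neighbours there are $w\in\{a_3,a_4\}$ (the third neighbour $u_1$ lies in $V(H_1)$ with degree three, so it satisfies neither (i) nor (ii)), and the conclusion does hold --- e.g.\ for $w=a_3$ take the $6$-cycle $h_1a_4a_5a_0a_1u_1h_1$ using $\alpha_0$, $\beta_0$ and the hub edge, and symmetrically for $w=a_4$; alternatively, note that $R_8^{-}$ admits an automorphism exchanging $h_1$ and $u_1$ (send $a_3\mapsto a_0$, $a_4\mapsto a_1$, $a_5\mapsto a_2$), which folds this case into your corner case. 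A related slip: your stated reason for excluding $w=h_1$ (``$d_J(h_1)\geq 4$'') is false when $m=0$; the correct reason is simply that $d_J(h_1)\neq 2$ while $h_1\in V(H_1)$. With the $x=h_1$, $m=0$ case added, the proof is complete.
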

\begin{figure}[!htb]
    \centering
     \begin{subfigure}{0.5\textwidth}
        \centering
          \begin{tikzpicture}[every node/.style={draw=black, circle,scale=0.7}, scale=0.5]
               
                \node[label=left:{$a_0$}] (a1) at (0,4) {};
                \node[label=below:{$u_1$}] (a2) at (1.5,4) {};
                \node[label=below:{$ $}] (a3) at (3,4) {};
                \node[label=below:{$ $}] (a4) at (4.5,4) {};
                \node[label=below:{$ $}] (a5) at (6,4) {};
                \node[label=below:{$v_1$}] (a8) at (7.5,4) {};
                \node[label=right:{$a_1$}] (a9) at (9,4) {};
    
                \node[label=left:{$a_4$}] (a10) at (0,8) {};
                \node[label=above:{$h_1$}] (a11) at (4.5,8) {};
                \node[label=right :{$a_3$}] (a12) at (9,8) {};
    
                \node[label=left:{$a_5$}] (a13) at (0,6){};
                \node[label=right:{$a_2$}] (a14) at (9,6){};
                \draw[ultra thick,blue] (a1) -- (a13) -- (a10);
                \draw[ultra thick,red] (a9) -- (a14) -- (a12);
                \draw[ultra thick,blue] (a10) --node[above,draw=none,black]{$\alpha_0$} (a11);
                \draw[ultra thick,red]  (a11) --node[above,draw=none,black]{$\alpha_1$} (a12);
                \draw (a1) to[in=240,out=300] (a9);
                \draw (a10) to[in=120,out=70] (a12);       
                \draw[ultra thick,blue] (a1) -- (a2) -- (a3) -- (a4);
                \draw (a4) -- (a5) -- (a8);
                \draw[ultra thick,red] (a8) -- (a9);
                \draw (a11) -- (a2);
                \draw[ultra thick,blue] (a11) --node[left,draw=none,black]{$f$} (a4);
                \draw[ultra thick,red] (a11) --node[above,draw=none,black]{$f^{'}$} (a8);

        \end{tikzpicture}
    \caption{}
        \label{fig:Illustation of Propositions- G_3a}
    \end{subfigure}%
     \begin{subfigure}{0.4\textwidth}
        \centering
         \begin{tikzpicture}[every node/.style={draw=black, circle,scale=0.7}, scale=0.5]
               
                \node[label=left:{$a_0$}] (a1) at (0,4) {};
                \node[label=below:{$u_1$}] (a2) at (1.5,4) {};
                \node[label=below:{$ $}] (a3) at (3,4) {};
                \node[label=below:{$v_1$}] (a8) at (4.5,4) {};
                \node[label=right:{$a_1$}] (a9) at (6,4) {};
    
                \node[label=left:{$a_4$}] (a10) at (0,8) {};
                \node[label=above:{$h_1$}] (a11) at (3,8) {};
                \node[label=right :{$a_3$}] (a12) at (6,8) {};
    
                \node[label=left:{$a_5$}] (a13) at (0,6){};
                \node[label=right:{$a_2$}] (a14) at (6,6){};
    
                \draw (a1) -- (a13) -- (a10);
                \draw (a9) -- (a14) -- (a12);
                \draw[ultra thick,blue] (a2) --node[above,draw=none,black]{$f$} (a11) --node[above,draw=none,black]{$f^{'}$} (a8);
    
                \draw[ultra thick,blue] (a1) -- (a2);
                \draw (a2) -- (a3) -- (a8);
                \draw[ultra thick,blue] (a8) -- (a9);
                \draw[ultra thick,red] (a10) --node[above,draw=none,black]{$\alpha_0$} (a11) --node[above,draw=none,black]{$\alpha_1$} (a12);
                \draw[ultra thick,blue] (a1) to[in=240,out=300] (a9);
                \draw[ultra thick,red] (a10) to[in=120,out=70] (a12);

        \end{tikzpicture}
    \caption{}
        \label{fig:Illustation of Propositions- G_3b}
    \end{subfigure}%
     \caption{illustration for the proof of Proposition~\ref{prop:g_3_osculating_bicycle}}
    \label{fig:Illustation of Propositions- G_3}
\end{figure}
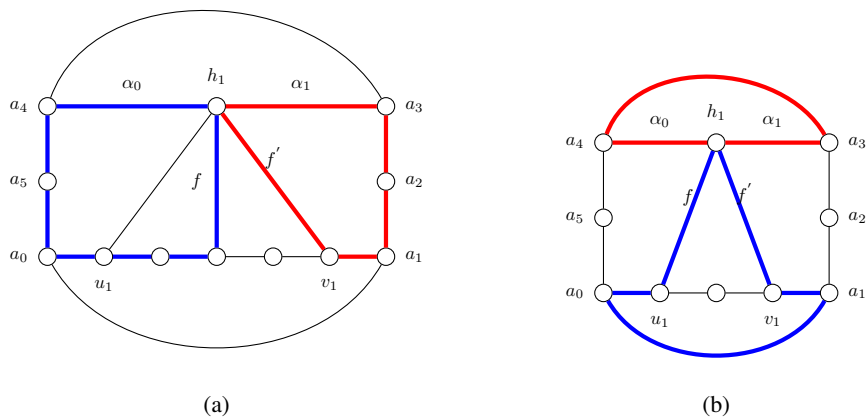

Finally, we discuss the existence of specific osculating bicycles in hexagon \bw s.

\begin{prop}{\sc [Osculating Bicycles in Hexagon Half Biwheels]}\newline
\label{prop:g_3_osculating_bicycle}
    Let $J \in \mathcal{G}_3$ and let $x$ denote a vertex of degree four or more. Let $\alpha_0$ and~$\alpha_1$ denote removable doubleton edges in $\partial(x)$ and let $f, f^{'} \in \partial(x) - \{\alpha_0,\alpha_1\}$ so that $\alpha_0,\alpha_1, f^{'},f$ appear in this cyclic order in the planar embedding of $J$. Then there exist:
    \begin{enumerate}[(i)]
        \item an odd osculating bicycle $(Q,Q^{'})$ such that $\alpha_0,\alpha_1 \in E(Q)$ and $f,f^{'} \in E(Q^{'})$, and
        \item an odd osculating bicycle $(C,C^{'})$ such that $\alpha_0,f \in E(C)$ and $\alpha_1,f^{'} \in E(C^{'})$.
    \end{enumerate}
\end{prop}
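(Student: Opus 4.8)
The plan is to follow the recipe used in the proofs of Propositions~\ref{prop:g_0_osculating_bicycle} and~\ref{prop:g_2_osculating_bicycle_1}: exhibit the two required bicycles explicitly and then check, in each case, that both constituent cycles are odd and that they share exactly the vertex $x$. Since $h_1$ is the only vertex of $J$ of degree four or more (every other vertex has degree two or three), I would adopt the notation of the definition of hexagon \bw s and record at once that $x = h_1$; writing $f := h_1 w$ and $f' := h_1 w'$, the ends $w, w'$ are the two prescribed neighbours of $h_1$ inside the \bw~$H_1$, both lying in the colour class $A_1$ that the hub dominates.

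The single observation that makes everything go through is a reading of the rotation at $h_1$. The prescribed cyclic order $\alpha_0, \alpha_1, f', f$ places $f$ and $f'$ on opposite sides of the even $u_1v_1$-path $P := H_1 - h_1$, with $w$ nearer the corner $u_1$ and $w'$ nearer the corner $v_1$; hence the sub-paths $P[u_1,w]$ and $P[v_1,w']$ are vertex-disjoint. Moreover each of these sub-paths has even length, because its two ends lie in the common class $A_1$ of the bipartite graph $H_1$. These two facts --- disjointness and even length --- are exactly what the parity and osculating checks below will consume.

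For part (i) I would take the triangle $Q := h_1 \alpha_0 a_4 a_3 \alpha_1 h_1$, together with
\[
Q' := h_1\, f\, w\,(H_1 - h_1)\, u_1\, \beta_1\, a_0\, a_1\, \beta_0\, v_1\,(H_1 - h_1)\, w'\, f'\, h_1;
\]
see Figure~\ref{fig:Illustation of Propositions- G_3b}. Here $Q$ has length three, while $Q'$ has length $1+\text{even}+1+1+1+\text{even}+1$, which is odd; and since $Q$ lives on $\{h_1,a_3,a_4\}$ whereas $Q'$ meets the hexagon only in $\{a_0,a_1\}$ and carries no further common vertex, the two cycles share precisely $h_1$. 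For part (ii) I would instead route the two cycles through the two complementary arcs of the hexagon, namely
\[
C := h_1\, \alpha_0\, a_4\, a_5\, a_0\, \beta_1\, u_1\,(H_1 - h_1)\, w\, f\, h_1
\quad\text{and}\quad
C' := h_1\, \alpha_1\, a_3\, a_2\, a_1\, \beta_0\, v_1\,(H_1 - h_1)\, w'\, f'\, h_1;
\]
see Figure~\ref{fig:Illustation of Propositions- G_3a}. Each of $C$ and $C'$ has length $5 + \text{even}$, hence is odd, and they use the disjoint hexagon arcs $a_4a_5a_0$ and $a_3a_2a_1$ together with the disjoint sub-paths $P[u_1,w]$ and $P[v_1,w']$, so again they meet only at $h_1$.

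The only delicate point is the bookkeeping: verifying that within each bicycle the two $H_1$-sub-paths are genuinely vertex-disjoint, and that the relevant sub-paths of $P$ have even length. Both are disposed of by the cyclic-order observation of the second paragraph, after which the parity counts and the osculating condition follow by inspection, exactly as in the generalized-prism and double-\bw\ cases.
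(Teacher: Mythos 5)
Your proposal is correct and exhibits exactly the same two pairs of cycles as the paper's own proof, with the same disjointness and parity justifications (the paper infers oddness from the fact that each constituent cycle meets each removable doubleton in exactly one edge, while you count lengths directly --- an immaterial difference). The only discrepancy is that the paper's proof attaches the triangle-plus-long-cycle bicycle to part (ii) and the two-hexagon-arc bicycle to part (i), i.e.\ its labels are swapped relative to the statement of the proposition; your assignment is the one that actually matches the statement as written.
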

\begin{proof}
We adopt notation from the definition of hexagon \bw s; thus $x=h_1$. Let $f:=h_1w$ and $f^{'}:=h_1w^{'}$.

To prove (i), we display an odd osculating bicycle $(Q,Q^{'})$, whose each constituent cycle uses precisely one edge from each of the two removable doubletons of $J$ as follows: $Q:=h_1w(H_1-h_1)u_1\beta_1a_0a_5a_4\alpha_0h_1$ and $Q^{'} := h_1w^{'}(H_1-h_1)v_1\beta_0a_1a_2a_3\alpha_1h_1$. Figure~\ref{fig:Illustation of Propositions- G_3a} shows an illustration.
 
Likewise, to prove (ii), we display an odd osculating bicycle $(C,C^{'})$, whose each constituent cycle uses precisely one edge from each of the two removable doubletons of $J$ as follows: $C:=h_1\alpha_1a_3a_4\alpha_0h_1$ and $C^{'} := h_1w(H_1-h_1)u_1\beta_1a_0a_1\beta_0v_1(H_1-h_1)w^{'}h_1$. Figure~\ref{fig:Illustation of Propositions- G_3b} shows an illustration.
\end{proof}
\subsection{Main Theorem: planar \ce~ irreducible graphs}
\label{section-6}
The following observation pertaining to all of the families defined in the previous section may be easily verified by the reader; see \cite[Theorem 10.7, Corollary 10.8]{bomu08}.

\begin{prop}
\label{prop:planar-3-connected}
    Let $J \in \mathcal{G}_{0}\cup\mathcal{G}_{1}\cup\mathcal{G}_{2}\cup\mathcal{G}_{3}$ and let $x$ denote a vertex of degree four or more. Then $J - x$ is a $2$-connected (planar) graph. Consequently, the neighbors of $x$ lie on a cycle of $J-x$. \qed
\end{prop}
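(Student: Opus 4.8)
The plan is to exploit the fact---recorded at the start of Section~\ref{section-5}---that every member $J$ of $\mathcal{G}_0\cup\mathcal{G}_1\cup\mathcal{G}_2\cup\mathcal{G}_3$ is a subdivision of a $3$-connected planar graph, say $G_0$, and hence (by Whitney's Theorem, \cite[Theorem 10.28]{bomu08}) admits a unique planar embedding. Since every subdivision vertex has degree two, the hypothesis $d_J(x)\geq 4$ forces $x$ to be a \emph{branch} vertex; that is, $x$ corresponds to a vertex $v$ of $G_0$ with $d_{G_0}(v)=d_J(x)\geq 4$. First I would record that, in each of the four families, the vertices of degree four or more are precisely the hubs of the constituent \bw s, and that every edge of $J$ incident with such a hub is unsubdivided: the \bw-edges at a hub, as well as the connecting edges $\alpha_i,\beta_i$, are single edges in all four constructions. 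This bookkeeping is the only family-specific input the argument requires.

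Granting this, the neighbors of $x$ in $J$ are exactly the $G_0$-neighbors of $v$ and are all branch vertices, so deleting $x$ orphans no subdivision vertex; consequently $J-x$ is itself a subdivision of $G_0-v$. Because $G_0$ is $3$-connected (in particular $|V(G_0)|\geq 4$), removing the single vertex $v$ lowers the connectivity by at most one, so $G_0-v$ is $2$-connected; and since subdividing edges preserves $2$-connectedness, $J-x$ is $2$-connected, which establishes the first assertion.

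For the ``consequently'' clause I would pass to the plane. Restricting the unique embedding of $J$ to $J-x$ merges the (cyclically ordered) faces incident with $x$ into a single face $F$ of the plane graph $J-x$; each neighbor of $x$, being an endpoint of an edge that bounded one of these faces, lies on the boundary of $F$. As $J-x$ is a $2$-connected plane graph, \cite[Theorem 10.7 and Corollary 10.8]{bomu08} guarantee that the boundary of every face---in particular of $F$---is a cycle. Hence all neighbors of $x$ lie on this cycle of $J-x$, as required.

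The only genuinely delicate point is the claim that no edge at $x$ is subdivided, since otherwise deleting $x$ would leave a pendant path and destroy $2$-connectedness; this is exactly where the explicit descriptions of $\mathcal{G}_0,\dots,\mathcal{G}_3$ must be invoked. Everything else is the standard interplay between subdivisions, vertex deletion in $3$-connected graphs, and face boundaries of $2$-connected plane graphs.
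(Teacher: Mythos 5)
Your argument is correct and follows the route the paper intends: the paper offers no written proof, merely pointing to the earlier observation that each member of $\mathcal{G}_{0}\cup\mathcal{G}_{1}\cup\mathcal{G}_{2}\cup\mathcal{G}_{3}$ is a subdivision of a $3$-connected planar graph and to \cite[Theorem 10.7, Corollary 10.8]{bomu08} on face boundaries of $2$-connected plane graphs, which is exactly the machinery you deploy. Your one genuinely substantive addition --- checking in each family that every neighbor of a vertex of degree four or more has degree at least three, so that no pendant subdivision path is created upon deletion --- is the right point to isolate, and it does hold in all four constructions.
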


We are now ready to state and prove the Main Theorem.

\begin{thm}{\sc[Planar Cycle-Extendable Irreducible Graphs]}\newline
\label{thm:characterize-nonbipartite-ce-graphs}
A planar irreducible \mcg~$G$ is \ce\ if and only if $G$ belongs to $\{K_2\} \cup  \mathcal{G}_{0} \cup \mathcal{G}_{1} \cup \mathcal{G}_{2} \cup \mathcal{G}_{3}$.
\end{thm}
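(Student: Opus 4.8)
The plan is to prove the forward direction by strong induction on $|V(G)|$; the backward direction needs no new work, since each member of $\mathcal{G}_0,\mathcal{G}_1,\mathcal{G}_2,\mathcal{G}_3$ is \ce\ by Propositions~\ref{prop:G_0-ce}--\ref{prop:G_3-ce}, and $K_2$ trivially so. For the forward direction, suppose $G$ is a planar irreducible \ce\ \mcg. If $G=K_2$ we are done; otherwise $G$ is $2$-connected, so $\delta(G)\ge 2$. If $\delta(G)\ge 3$, then by Corollary~\ref{DegreeThree-Graph} the graph $G$ is a brick, and by Corollary~\ref{cor:Planar Cycle-Extendable Bricks} it is a wheel or a prism --- members of $\mathcal{G}_1$ and $\mathcal{G}_0$ respectively (taking every constituent \bw\ to be $K_2$). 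Hence I may assume that $G$ has a vertex $x_0$ of degree two; since $G$ is irreducible, its two neighbours $x_1,x_2$ are distinct and both of degree at least three.

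Next I would pass to the contraction $J':=G/x_0$, with contraction vertex $x$, and let $J$ be its underlying simple graph. By Lemma~\ref{prp:cycle-extendability-inherited-through-tight-cuts} together with parallel reduction, $J$ is a planar irreducible \ce\ \mcg\ on fewer vertices; by Corollary~\ref{cor:J is 2-stable and parallel edge}, $J'$ is either equal to $J$ or is obtained from $J$ by doubling a single edge incident with $x$. Since $d_{J'}(x)=d_G(x_1)+d_G(x_2)-2\ge 4$, the graph $J$ is not $K_2$, so the induction hypothesis gives $J\in\mathcal{G}_0\cup\mathcal{G}_1\cup\mathcal{G}_2\cup\mathcal{G}_3$. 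The remaining task is to recover $G$ from $J$: the graph $G$ arises by \emph{bisplitting} $x$, that is, by partitioning $\partial_{J'}(x)$ into the edges incident with $x_1$ and those incident with $x_2$ and inserting the degree-two vertex $x_0$. Fixing a planar embedding of $G$ and contracting $x_0$ shows that this partition is an \emph{interval} in the cyclic order of $\partial(x)$ induced by the (unique, by Whitney) planar embedding of $J$ --- a fact I would use repeatedly, invoking Proposition~\ref{prop:planar-3-connected}.

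The heart of the argument is a single recurring obstruction. Whenever $J'$ has an osculating bicycle $(Q,Q')$ both of whose cycles are \emph{split} by the bisplit (each using one edge at $x_1$ and one at $x_2$), the Osculating Bicycle Lemma~(\ref{lem:osculating-bicycle}) produces an $x_0$-isolating even cycle in $G$, which is non-conformal, contradicting that $G$ is \ce. I would organise the casework by $d_J(x)$. If $d_J(x)=3$ --- so $J'$ doubles the edge from $x$ to some neighbour $u$, and both $x_1,x_2$ become cubic and adjacent to $u$ --- then the doubled pair is an even digon that the bisplit necessarily splits; pairing it with any even cycle of $J-u$ through $x$ yields the forbidden configuration. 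The relevant Even-Cycle proposition for the family of $J$ --- Proposition~\ref{prop:G_0_cubic_vertex}, \ref{prop:G_1_cubic_vertex}, \ref{prop:G_2_cubic_vertex}, \ref{prop:G_3_cubic_vertex_six} or \ref{prop:G_3_cubic_vertex_hw} --- exhibits such a cycle unless $u$ is a hub, which (via the local neighbourhood of a cubic vertex) forces the bisplit merely to lengthen the even path of a \bw\ by two vertices, keeping $G$ in the same family. If instead $d_J(x)\ge 4$, then $x$ is a hub (or one of the vertices $h_0,h_1$), and I would invoke the Osculating-Bicycle propositions --- \ref{prop:g_0_osculating_bicycle}, \ref{prop:G_1_non_cubic_vertex}, \ref{prop:g_2_osculating_bicycle_1}, \ref{prop:g_2_osculating_bicycle_2}, \ref{prop:g_3_osculating_bicycle} --- which exhibit, for each relevant quadruple of edges in cyclic order around $x$, an odd and/or an even osculating bicycle. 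Combined with the interval-split constraint, these force the admissible partitions of $\partial(x)$ to be precisely those that reconstruct a member of $\mathcal{G}_0\cup\mathcal{G}_1\cup\mathcal{G}_2\cup\mathcal{G}_3$; verifying this recognition in each case completes the inductive step.

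I expect the main obstacle to be the high-degree case $d_J(x)\ge 4$: one must enumerate the interval partitions of $\partial(x)$, eliminate all but finitely many via the osculating-bicycle propositions, and then match each surviving partition against the definitions of $\mathcal{G}_0,\ldots,\mathcal{G}_3$, confirming that it produces a genuine family member rather than some new planar \ce\ graph outside the list. Transitions between distinct families --- for instance, a bisplit of a generalized-wheel hub that creates a second high-degree vertex and thereby lands in the double or hexagon \bw\ families --- are exactly where completeness of the casework must be argued most carefully. Additional care is needed in the bookkeeping of the single doubled edge permitted by Corollary~\ref{cor:J is 2-stable and parallel edge}, in checking that the reconstructed $G$ remains simple and irreducible, and in using planarity (Proposition~\ref{prop:planar-3-connected}) both to restrict the splits to intervals and to license the cyclic-order hypotheses of the osculating-bicycle propositions.
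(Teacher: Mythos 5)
Your proposal follows essentially the same route as the paper's own proof: induction after reducing to a degree-two vertex, contraction to $J':=G/x_0$, control of multiple edges via Corollary~\ref{cor:J is 2-stable and parallel edge}, the Osculating Bicycle Lemma as the central obstruction, the Even-Cycle propositions for $d_J(x)=3$ and the Osculating-Bicycle propositions for $d_J(x)\geq 4$, and the same family transitions. The only caveat is that the detailed case analysis (which constitutes the bulk of the paper's argument) is outlined rather than carried out, but the plan correctly identifies every ingredient the paper uses to complete it.
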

\begin{proof}
 Propositions~\ref{prop:G_0-ce}, \ref{prop:G_1-ce},  \ref{prop:G_2-ce} and \ref{prop:G_3-ce} prove the reverse implication. For the forward implication, let $G$ be a planar irreducible \mcg~that is \ce. We proceed by induction on the number of edges. Firstly, if $\delta(G)\geq 3$ then, by Corollary~\ref{DegreeThree-Graph} and Corollary~\ref{cor:Planar Cycle-Extendable Bricks}, $G$ is either a wheel or a prism; thus $G \in \mathcal{G}_0 \cup \mathcal{G}_1$.

Now suppose that $G$ has a vertex of degree two, say $x_0$, and let $x_1$ and $x_2$ denote its neighbors. Let $J^{'} := G/x_0$ and let $x$ denote its bicontraction vertex. Since $G$ is irreducible, $d_{J^{'}}(x) \geq 4$. Since $G$ is \ce, by the first part of Corollary~\ref{cor:J is 2-stable and parallel edge}, $J^{'}$ has at most two multiple edges. Clearly, multiple edges (if any) are incident at $x$. In case $J^{'}$ is not simple we use $e$ and $e^{'}$ to denote its multiple edges. We let $J$ denote the underlying simple graph of $J^{'}$. Furthermore, either $J = J^{'}$ or otherwise we adjust notation so that $J = J^{'} - e^{'}$. Thus, $d_{J}(x) \geq 3$. Since $G$ is irreducible, by the second part of Corollary~\ref{cor:J is 2-stable and parallel edge}, $J$ is also irreducible. 

Clearly, $J$ and $J^{'}$ are planar \mcg s. By Lemma~\ref{prp:cycle-extendability-inherited-through-tight-cuts}, they are also \ce. Since $J$ is irreducible and $d_J(x) \geq 3$, by the induction hypothesis, ${J}$ belongs to $\mathcal{G}_{0}\cup\mathcal{G}_{1}\cup\mathcal{G}_{2}\cup\mathcal{G}_{3}$.

We will divide the proof into cases depending on the degree of vertex $x$ and whether $J$ belongs to $\mathcal{G}_{0}$, $\mathcal{G}_{1}$, $ \mathcal{G}_{2}$ or $ \mathcal{G}_{3}$. Within each case, we will consider various subcases. In each subcase, we will either display an osculating bicycle $(Q, Q^{'})$ in $J^{'}$ such that neither of its constituent cycles is a cycle in $G$ and invoke Lemma \ref{lem:osculating-bicycle} to arrive at a contradiction (by inferring that $G$ is not \ce), or otherwise conclude that $G$ belongs to $\mathcal{G}_{0}, \mathcal{G}_{1}, \mathcal{G}_{2}$ or $ \mathcal{G}_{3}$. Depending on the family that $J$ belongs to, we shall adopt the following notation.

\begin{Not}
\label{not:1}
    If $J\in \mathcal{G}_0\cup\mathcal{G}_2\cup\mathcal{G}_3$, then we let $R_0,R_1,\ldots,R_k$ denote all of the removable doubletons of $J$, and $H_0,H_1,\ldots,H_k$ denote the components of $G-R_0-R_1-\ldots-R_k$. Furthermore, if $J \in \mathcal{G}_3$, then we let $H_0$ denote the component that is a $6$-cycle.
\end{Not}

\begin{Not}
\label{not:2}
    If $J \in \mathcal{G}_1-K_4$, then we let $E_3$ denote the set of edges whose both ends are cubic, $h$ denote the unique cut vertex of $J-E_3$, and $H_0,H_1,\ldots,H_k$ denote the (\bw) blocks of $J-E_3$ so that they appear in this cyclic order (around the hub $h$) in the planar embedding of $J$ and adjust notation so that $E_3=\{v_iu_{i+1}: v_i \in V(H_i)~{\rm and }~u_{i+1} \in V(H_{i+1})\}$.
\end{Not}
{\bf Case 1: $d_{J}(x) = 3$}.

Observe that $J^{'}$ is obtained from $J$ by adding a multiple edge $e^{'}$ incident with $x$.
Let $e=e^{'}=xw$ in $J^{'}$ and let $Q^{'}$ denote $2$-cycle $ee^{'}$. Observe that, since $d_{J^{'}}(x)=4$ and since $G$ is simple, there is a unique way to bisplit $x$ and obtain the graph $G$ from $J^{'}$. Consequently, if there exists an even cycle $Q$ in $J^{'}$ such that $(Q, Q^{'})$ is an osculating bicycle in $J^{'}$, then neither $Q$~nor~$Q^{'}$ is a cycle in $G$; in this case, by Lemma \ref{osculating-bicycle}, we arrive at a contradiction to the assumption that $G$ is \ce. Observe that $Q$ exists if and only if $J-w$ has an even cycle that contains $x$. From the preceding discussion, it suffices to consider only those cases in which $J-w$ has no even cycle that contains $x$. In the following two paragraphs, we shall heavily exploit this observation.

If $J \in \mathcal{G}_l$ where $l \in \{0,2,3\}$, we adopt Notation~\ref{not:1}. If $l=3$, we invoke Proposition~\ref{prop:G_3_cubic_vertex_six} to infer that $x \notin V(H_0)$. Now, we may adjust notation so that $x \in V(H_1)$.  Depending on the value of $l$, we invoke Proposition \ref{prop:G_0_cubic_vertex}, or \ref{prop:G_2_cubic_vertex}, or \ref{prop:G_3_cubic_vertex_hw}, to infer that $d_J(w) \geq 3$ and that $w$ is a hub of the same \bw~$H_1$. Since the bisplitting is unique (as noted earlier), the reader may easily verify that $G$ belongs to the same family $\mathcal{G}_l$.

Now consider the case in which $J \in \mathcal{G}_1$. If $J = K_4$ then the reader may easily verify that $G=W_5^{-}$. Now suppose that $J \neq K_4$ and adopt Notation~\ref{not:2}. We invoke Proposition~\ref{prop:G_1_cubic_vertex} to infer that either $w=h$, or otherwise, $|E_3|=3$ and $J-E_3-h$ has precisely two isolated vertices --- namely, $x$~and~$w$. If $w=h$, then using the fact that bisplitting of $x$ is unique, one may easily see that $G \in \mathcal{G}_1$. We consider the remaining case below.
\begin{figure}[!htb]
    \centering
      \begin{subfigure}{0.4\textwidth}
        \centering
         \begin{tikzpicture}[every node/.style={draw=black, circle,scale=0.7}, scale=0.5]
                    \node (a6) at (0,1.5) {};
                    \node (a7) at (-0.5,5.75) {};
                    \node (a12) at (-0.3,3.75) {};
                    \node (a13) at (1,6.75) {};
                    \node (a8) at (3,8) {};
                    \node[label=right:{$x$}] (a9) at (6.5,5.75) {};
                    \node[label=right:{$w$}] (a10) at (5.75,1.5) {};
                    \node[label=below:{$h$}] (a1) at (3,4.5){};
                    \draw (a6) -- (a12);
                    \draw (a12) -- (a7);
                    \draw (a7) -- (a1);
                    \draw (a1) -- (a6);
                    \draw (a1) -- (a8);
                    \draw (a1) -- (a9);
                    \draw (a1) -- (a10);
                    \draw (a7) -- (a13) -- (a8);
                    \draw (a8) -- (a9);
                    \draw[] (a9) -- (a10);
                    \draw[] (a9) to[in=40,out=300]  (a10);
                    \draw (a10) -- (a6);
        \end{tikzpicture}
    \caption{J}
        
    \end{subfigure}
    \hspace*{50pt}
      \begin{subfigure}{0.4\textwidth}
        \centering
        \begin{tikzpicture}[every node/.style={draw=black, circle,scale=0.7}, scale=0.5]
                    \node (a6) at (0,1.5) {};
                    \node (a7) at (-0.5,5.75) {};
                    \node (a12) at (-0.3,3.75) {};
                    \node (a13) at (1,6.75) {};
                    \node (a8) at (3,8) {};
                    \node[label=right:{$x_1$}] (a19) at (6,4.75) {};
                    \node[label=left:{$x_0$}] (a9) at (6.5,5.75) {};
                    \node[label=right:{$x_2$}] (a20) at (6,6.75) {};
                    \node[label=right:{$w$}] (a10) at (5.75,1.5) {};
                    \node[label=below:{$h$}] (a1) at (3,4.5){};
                    \draw[matching] (a6) -- (a12);
                    \draw[matching] (a12) -- (a7);
                    \draw[matching] (a7) -- (a1);
                    \draw[matching] (a1) -- (a6);
                    \draw[matching] (a1) -- (a8);
                    \draw (a1) -- (a19);
                    \draw (a1) -- (a10);
                    \draw[matching] (a7) -- (a13) -- (a8);
                    \draw (a8) -- (a20);
                    \draw[matching] (a19)-- (a9) -- (a20);
                    \draw[matching] (a10) -- (a19);
                    \draw[matching](a20) to[in=40,out=330]  (a10);
                    \draw (a10) -- (a6);
        \end{tikzpicture}
    
    \caption{G}
    
    \end{subfigure}
    \caption{the case in which $J \in \mathcal{G}_1$ and $G \in \mathcal{G}_2$}
    \label{fig:graph_g_2_main_proof}
\end{figure}
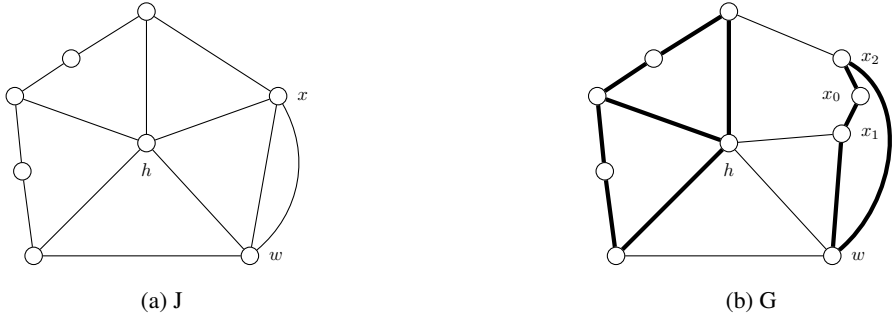\\
Suppose that $|E_3|=3$ and that $J-E_3-h$ has precisely two isolated vertices --- namely, $x$~and~$w$. Based on the description of $\mathcal{G}_1$, this is equivalent to saying that $k=2$ and precisely two of the \bw s $H_0$, $H_1$ and $H_2$ are isomorphic to $K_2$. We adjust notation so that each of $H_1$ and $H_2$ is isomorphic to $K_2$ and that $x \in V(H_1)$ and $w \in V(H_2)$. Since the bisplitting is unique, observe that $H:=G$[$w,x_1,x_0,x_2$] is isomorphic to~$C_4$, and that $G$ is a member of $\mathcal{G}_2$ ---  wherein $H$ (with hub $w$) and $H_0$ (with hub $h$) are the \bw s as per the definition of double \bw s in Section~\ref{sec:Double-half-biwheel}. See Figure \ref{fig:graph_g_2_main_proof}.

{\bf Case 2: $d_{J}(x) \geq 4$}.

Note that, unlike the previous case, we must consider all possible bisplittings of the vertex~$x$ that lead to a planar graph (instead of just one bisplitting). Observe that $\partial_{J^{'}}(x) = \partial_G(\{x_0,x_1,x_2\})$. It follows from Proposition~\ref{prop:planar-3-connected} that the cyclic order of these edges must be the same in $G$ around the set $\{x_0,x_1,x_2\}$ as the cyclic order in $J^{'}$; otherwise, it is easy to see that the resulting graph has a subdivision of $K_{3,3}$.

 We consider subcases depending on whether $J$ belongs to $\mathcal{G}_{0}$ $\cup$ $\mathcal{G}_{2}$ $\cup$ $ \mathcal{G}_{3}$ or to $ \mathcal{G}_{1}$. Note that an osculating bicycle in $J$ also exists in $J^{'}$. It is for this reason that, when applicable, we simply display an osculating bicycle in $J$ with the desired properties in order to arrive at a contradiction as discussed earlier.

\noindent{\bf Case 2.1: $J \in \mathcal{G}_0 \cup \mathcal{G}_2 \cup \mathcal{G}_3$.}

We adopt Notation~\ref{not:1}. Since $d_{J}(x) \geq 4$, we may adjust notation so that $x$ is a hub of $H_1$ that is not isomorphic to $K_2$. Let $\alpha_0$ and $\alpha_1$ denote the removable doubleton edges in~$\partial_{J}(x)$. Since $G$ is obtained from $J^{'}$ by bisplitting the vertex $x$, the edges $\alpha_0$ and $\alpha_1$ may or may not be adjacent in $G$; we consider these cases separately. 

First suppose that $\alpha_0$ and $\alpha_1$ are adjacent in $G$ and adjust notation so that $\alpha_0,\alpha_1  \in \partial_{G}(x_1)$. We choose $f,f^{'} \in \partial_{G}(x_2)-x_2x_0$ so that $\alpha_0,\alpha_1, f^{'},f$ appear in this cyclic order in the planar embedding of $J$. Depending on whether $J$ belongs to $\mathcal{G}_0,\mathcal{G}_2$ or $\mathcal{G}_3$, we invoke Proposition \ref{prop:g_0_osculating_bicycle} (ii), \ref{prop:g_2_osculating_bicycle_1} or \ref{prop:g_3_osculating_bicycle} (ii), in order to locate the desired osculating bicycle $(Q,Q^{'})$ in $J$. 

Now suppose that $\alpha_0$ and $\alpha_1$ are nonadjacent in $G$ and adjust notation so that $\alpha_0\in \partial(x_1)$ and $\alpha_1\in \partial(x_2)$. We consider two subcases depending on whether $J \in \mathcal{G}_0 \cup \mathcal{G}_3$ or $J \in \mathcal{G}_2$. If $J \in \mathcal{G}_0 \cup \mathcal{G}_3$, we choose $f \in \partial_{G}(x_1) - x_1x_0 - \alpha_0$ and $f^{'} \in \partial_{G}(x_2)-x_2x_0 - \alpha_1$ so that $\alpha_0, \alpha_1, f^{'},f$ appear in this cyclic order in the planar embedding of $J$. Depending on whether $J$ belongs to $\mathcal{G}_0$ or $\mathcal{G}_3$, we invoke Proposition \ref{prop:g_0_osculating_bicycle} (i) or \ref{prop:g_3_osculating_bicycle} (i), in order to locate the desired osculating bicycle $(Q,Q^{'})$ in $J$. Henceforth $J \in \mathcal{G}_2$; we consider two subcases depending on whether $d_{J}(x) = 4$ or $d_{J}(x) \geq 5$.

  Let us first suppose that $d_{J}(x) = 4$. Equivalently, $H_1$ is isomorphic to a $4$-cycle, say $h_1u_1yv_1h_1$, where $h_1:=x$. Since $\alpha_0 \in \partial_G(x_1)$ and $\alpha_1 \in \partial_G(x_2)$, observe that $x_1u_1yv_1x_2x_0x_1$ is a $6$-cycle in $G$ and that $G\in \mathcal{G}_3$ as per the definition of hexagon \bw s in Section~\ref{sec:Hexagon-half-biwheel}; see Figure \ref{fig:Illustration-bisplitting-g_2-g_3}.

\begin{figure}[!htb]
    \centering
    
      \begin{subfigure}{0.4\textwidth}
        \centering
        \begin{tikzpicture}[every node/.style={draw=black, circle,scale=0.66}, scale=0.45]
               
                \node[label=below:{$h_0$}] (a1) at (5.5,0) {};
                \node[label=above:{$v_0$}] (a2) at (2.5,4) {};
                \node[label=above:{$ $}] (a3) at (4,4) {};
                \node[label=above:{$ $}] (a4) at (5.5,4) {};
                \node[label=above:{$ $}] (a5) at (7,4) {};
                \node[label=above:{$u_0$}] (a6) at (8.5,4) {};
                \node[label=above:{$h_1=x$}] (a7) at (13.5,4) {};
                \node[label=below:{$u_1$}] (a8) at (12,0) {};
                \node[label=below:{$y$}] (a9) at (14,0) {};
                \node[label=below:{$v_1$}] (a10) at (16,0) {};
                \draw (a8) -- (a1);
                \draw (a1) -- (a7);
                \draw (a7) -- (a6);
                \draw (a2) to[in=240,out=240] (a10);
                \draw (a1) -- (a2);
                \draw (a2) -- (a3) -- (a4) -- (a5) -- (a6);
                \draw (a6) -- (a1) -- (a4);
                \draw (a7) -- (a8);
                \draw (a8) -- (a9);
                \draw (a9) -- (a10);
                \draw (a10) -- (a7) ;
                
        \end{tikzpicture}
    
    \caption{J}
        
    \end{subfigure}
    \hspace*{50pt}
      \begin{subfigure}{0.4\textwidth}
        \centering
       \begin{tikzpicture}[every node/.style={draw=black, circle,scale=0.7}, scale=0.5]
               
                \node[label=left:{$v_1$}] (a1) at (0,8) {};
                \node[label=above:{$v_0$}] (a2) at (1.5,8) {};
                \node[label=above:{$ $}] (a3) at (3,8) {};
                \node[label=above:{$ $}] (a4) at (4.5,8) {};
                \node[label=above:{$ $}] (a5) at (6,8) {};
                \node[label=above:{$u_0$}] (a8) at (7.5,8) {};
                \node[label=right:{$x_2$}] (a9) at (9,8) {};
    
                \node[label=left:{$u_1$}] (a10) at (0,4) {};
                \node[label=below:{$h_0$}] (a11) at (4.5,4) {};
                \node[label=right :{$x_1$}] (a12) at (9,4) {};
    
                \node[label=left:{$y$}] (a13) at (0,6){};
                \node[label=right:{$x_0$}] (a14) at (9,6){};
                \draw (a1) -- (a13) -- (a10);
                \draw (a9) -- (a14) -- (a12);
                \draw (a10) -- (a11);
                \draw  (a11) -- (a12);
                \draw (a1) to[in=120,out=60] (a9);
                \draw (a10) to[in=240,out=290] (a12);       
                \draw (a1) -- (a2) -- (a3) -- (a4);
                \draw (a4) -- (a5) -- (a8);
                \draw (a8) -- (a9);
                \draw (a11) -- (a2);
                \draw (a11) -- (a4);
                \draw (a11) -- (a8);

        \end{tikzpicture}
    
    \caption{G}
        
    \end{subfigure}
    \caption{the case in which $J \in \mathcal{G}_2$ and $G \in \mathcal{G}_3$}
    \label{fig:Illustration-bisplitting-g_2-g_3}
\end{figure}
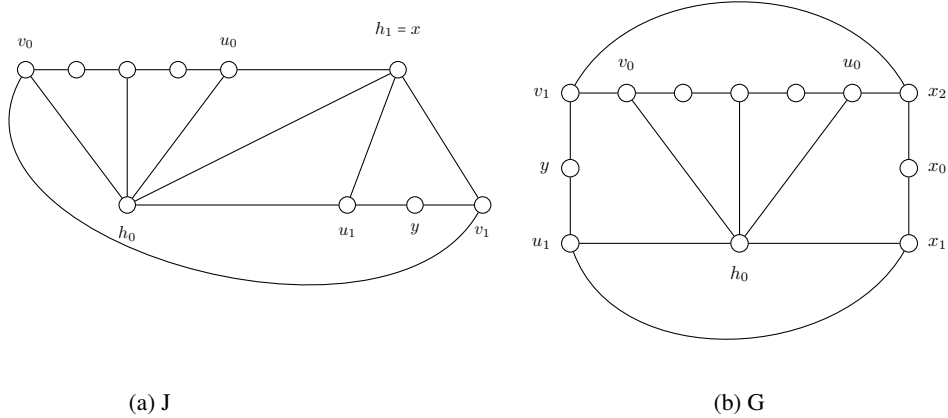
Lastly, suppose that $d_{J}(x) \geq 5$; consequently, at least one of $x_1$ and $x_2$ has degree four or more. Adjust notation so that $d_G(x_2) \geq 4$. Now, we may choose $f^{'} \in \partial_G(x_1)-x_1x_0 - \alpha_0$ and distinct $f^{''},f \in \partial_G(x_2)-x_2x_0-\alpha_1$ so that $\alpha_1, \alpha_0, f^{'},f^{''},f$ appear in this cyclic order in the planar embedding of $J$. We invoke Proposition \ref{prop:g_2_osculating_bicycle_2} to locate the desired osculating bicycle $(Q,Q^{'})$ in $J$.\\

\noindent{\bf Case 2.2: $J \in \mathcal{G}_1.$}

We adopt Notation~\ref{not:2}. Since $d_J(x)\geq 4$, the vertex $x$ is precisely the hub $h$ of $J$. Note that, for each $0 \leq i \leq k$, the edge set of the \bw~$H_i$ (in $J$) may or may not form a \bw~in $G$. Furthermore, $E(H_i)$ forms a \bw~in $G$ if and only if all edges in $E(H_i) \cap \partial_J(x)$ are incident with (precisely) one of $x_1$ and $x_2$ (in G); in this case, we say that the \bw~$H_i$ {\em remains intact}; otherwise we say that the \bw~$H_i$ {\em is destroyed}. It follows from planarity that the number of \bw s that are destroyed is either zero, two or one; we shall consider these cases separately in that order.

First we consider the case in which zero \bw s are destroyed. We may adjust notation so that there exists $0\leq r < k$ such that: (i) for each $0\leq i \leq r$, all edges in $E(H_i) \cap \partial_J(x)$ are incident with $x_1$ (in $G$), and likewise (ii) for each $r < i \leq k$, all edges in $E(H_i) \cap \partial_J(x)$ are incident with $x_2$. It follows from the irreducibility of $G$ that the four edges $xu_0,xv_r,xu_{r+1}$ and $xv_{k}$ are pairwise distinct. We invoke Proposition~\ref{prop:G_1_non_cubic_vertex} (i) to locate the desired osculating bicycle $(Q,Q^{'})$ in $J$.

Next we consider the case in which two \bw s, say $H_i$ and $H_r$, are destroyed. Let $e$ denote an edge in $E(H_i) \cap \partial_J(x)$ that is incident with $x_1$ (in $G$) and $f$ denote an edge in $E(H_i) \cap \partial_J(x)$ that is incident with $x_2$. Likewise, let $e^{'}$ denote an edge in $E(H_r) \cap \partial_J(x)$ that is incident with $x_1$ and $f^{'}$ denote an edge in $E(H_r) \cap \partial_J(x)$ that is incident with $x_2$. We invoke Proposition~\ref{prop:G_1_non_cubic_vertex} (ii) to locate the desired osculating bicycle $(Q,Q^{'})$ in $J$.

Lastly we consider the case in which precisely one \bw, say $H_0$, is destroyed. we may adjust notation so that there exists $0 < r \leq k$ such that: (i) for each $0 < i \leq r$, all edges in $E(H_i) \cap \partial_J(x)$ are incident with $x_1$ (in $G$), and likewise (ii) for each $r < i \leq k$, all edges in $E(H_i) \cap \partial_J(x)$ are incident with $x_2$. Unless $H_0$ is isomorphic to $C_4$, $k=2$, each of $H_1$~and~$H_2$ is isomorphic to $K_2$, and $r=1$, the reader may verify that it is possible to choose $e_0,e_1,e_r,e_{r+1} \in \partial_J(h)$ that satisfy the conditions stated in Proposition~\ref{prop:G_1_non_cubic_vertex} (iii) so that $e_0$~and~$e_1$ are nonadjacent in $G$ and, likewise, $e_r$ and $e_{r+1}$ are nonadjacent in $G$; we invoke Proposition~\ref{prop:G_1_non_cubic_vertex} (iii) to locate the desired osculating bicycle $(Q,Q^{'})$ in $J$.

Now suppose that $H_0$ is isomorphic to $C_4$, $k=2$, each of $H_1$~and~$H_2$ is isomorphic to $K_2$, and $r=1$. Observe that $J$ is the graph $W_5^{-}$ as per the labels shown in Figure~\ref{fig:w_5-}. First suppose that $J \neq J^{'}$; up to symmetry, either $e^{'} = hu_1$ or otherwise $e^{'} = hu_0$. In the former case, since $H_0$ is destroyed and $G$ is simple, $(Q:=H_0,Q^{'}:=ee^{'})$ is the desired even osculating bicycle in $J^{'}$. In the latter case, it follows from irreducibility and planarity that $(Q:=hv_0u_2u_1h,Q^{'}:=ee^{'})$ is the desired even osculating bicycle in $J^{'}$. Finally, if $J=J^{'}$, then it follows from the facts that $H_0$ is destroyed and $G$ is irreducible, that $G$ is the graph $R_8^{-} \in \mathcal{G}_3$ shown in Figure~\ref{fig:R_8-}. 

This completes our proof of the Main Theorem (\ref{thm:characterize-nonbipartite-ce-graphs}).
\end{proof}

We conclude our paper with the following corollaries of our Main Theorem (\ref{thm:characterize-nonbipartite-ce-graphs}) and Proposition~\ref{prop:irreducible-reduction}.
\begin{cor}
    A planar bipartite \mcg~$G$ is \ce~if and only if any irreducible graph H, obtained from $G$ by repeated applications of series and parallel reductions, is isomorphic to $K_2$.\qed
\end{cor}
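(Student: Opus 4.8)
The plan is to combine Proposition~\ref{prop:irreducible-reduction} with the Main Theorem (\ref{thm:characterize-nonbipartite-ce-graphs}), the only additional ingredient being the observation that series and parallel reductions preserve bipartiteness. First I would record this observation. A parallel reduction merely deletes one of two parallel edges, which clearly keeps the graph bipartite. A series reduction replaces a path $P:=wxyz$ of length three (whose internal vertices have degree two) by a single edge joining $w$ and $z$; since $G$ is bipartite and $P$ has odd length, its ends $w$ and $z$ lie in opposite colour classes, so the new edge $wz$ respects the bipartition. Both operations trivially preserve planarity as well. Consequently, any irreducible graph $H$ obtained from the planar bipartite \mcg~$G$ by repeated series and parallel reductions is itself a planar bipartite irreducible \mcg.

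Next I would invoke Proposition~\ref{prop:irreducible-reduction}, which guarantees that $G$ is \ce~if and only if $H$ is \ce. It therefore suffices to determine precisely when the planar bipartite irreducible \mcg~$H$ is \ce. For this I would apply the Main Theorem (\ref{thm:characterize-nonbipartite-ce-graphs}), according to which $H$ is \ce~if and only if $H$ belongs to $\{K_2\}\cup\mathcal{G}_0\cup\mathcal{G}_1\cup\mathcal{G}_2\cup\mathcal{G}_3$. Since every member of the four families $\mathcal{G}_0,\mathcal{G}_1,\mathcal{G}_2,\mathcal{G}_3$ is nonbipartite by construction, and since $H$ is bipartite, the only surviving possibility is $H\cong K_2$ (which is indeed bipartite and vacuously \ce, having no even cycles). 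Chaining the two equivalences gives that $G$ is \ce~if and only if $H\cong K_2$, as required.

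There is no genuine obstacle here; the corollary is essentially immediate once one notices that the four nonbipartite families appearing in the Main Theorem cannot contain a bipartite graph, so that bipartiteness collapses the entire list to the single graph $K_2$. The only point requiring a moment's care is the preservation of bipartiteness under series reduction, which hinges on the parity of the replaced path; the phrasing ``any irreducible graph $H$'' is harmless, since \ce-ness is an invariant of the reduction process, whence every irreducible graph so obtained shares the same status and hence is simultaneously $K_2$ or not.
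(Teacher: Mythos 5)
Your proposal is correct and follows exactly the route the paper intends: the paper states this corollary with no written proof, as an immediate consequence of Proposition~\ref{prop:irreducible-reduction} and the Main Theorem~(\ref{thm:characterize-nonbipartite-ce-graphs}), and your argument supplies precisely the (routine) missing details — that series and parallel reductions preserve planarity and bipartiteness, and that the four families $\mathcal{G}_0,\ldots,\mathcal{G}_3$ contain only nonbipartite graphs, so the list collapses to $K_2$.
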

\begin{cor}
    A planar nonbipartite \mcg~$G$ is \ce~if and only if any irreducible graph H, obtained from $G$ by repeated applications of series and parallel reductions, belongs to $\mathcal{G}_{0}\cup\mathcal{G}_{1}\cup\mathcal{G}_{2}\cup\mathcal{G}_{3}$.\qed
\end{cor}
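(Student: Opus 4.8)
The plan is to dispatch the reverse implication immediately and to prove the forward implication by induction on the number of edges. For the reverse direction, Propositions~\ref{prop:G_0-ce}, \ref{prop:G_1-ce}, \ref{prop:G_2-ce} and~\ref{prop:G_3-ce} already certify that every member of $\mathcal{G}_0 \cup \mathcal{G}_1 \cup \mathcal{G}_2 \cup \mathcal{G}_3$ is \ce, while $K_2$ is \ce\ trivially; so nothing remains there. For the forward direction I would let $G$ be a planar irreducible \ce\ \mcg\ and split on the minimum degree. If $\delta(G) \geq 3$, then Corollary~\ref{DegreeThree-Graph} forces $G$ to be a brick, and Corollary~\ref{cor:Planar Cycle-Extendable Bricks} identifies it as a wheel or a prism; since wheels lie in $\mathcal{G}_1$ and prisms in $\mathcal{G}_0$ (taking every \bw\ to be $K_2$), this settles the high-minimum-degree situation and supplies the base of the induction.

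The inductive heart of the argument treats a vertex $x_0$ of degree two, with neighbours $x_1, x_2$. First I would pass to the bicontraction $J' := G/x_0$, whose contraction vertex I call $x$. As bicontraction is a tight-cut contraction, $J'$ is a planar \mcg, and Lemma~\ref{prp:cycle-extendability-inherited-through-tight-cuts} keeps it \ce. Irreducibility of $G$ gives $d_{J'}(x) \geq 4$, and Corollary~\ref{cor:J is 2-stable and parallel edge} then guarantees that $J'$ carries at most two parallel edges (necessarily at $x$) and that its underlying simple graph $J$ is again irreducible with $d_J(x) \geq 3$. Since $J$ has fewer edges than $G$ and cannot be $K_2$, the induction hypothesis places $J$ in $\mathcal{G}_0 \cup \mathcal{G}_1 \cup \mathcal{G}_2 \cup \mathcal{G}_3$. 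The whole problem is thereby reduced to understanding how $G$ can be recovered from $J'$ by \emph{bisplitting} $x$, the inverse of bicontraction.

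The strategy for the bisplitting analysis is uniform: in each configuration I would either exhibit an osculating bicycle $(Q,Q')$ in $J'$ whose two cycles are \emph{not} cycles of $G$ --- so that the Osculating Bicycle Lemma~(\ref{lem:osculating-bicycle}) produces an $x_0$-isolating even cycle and contradicts cycle-extendability --- or else verify directly that $G$ lands in one of the four families. I would organise this by the degree of $x$. When $d_J(x) = 3$, so that $J' = J + e'$ admits a unique bisplit, the Even-Cycle Propositions~\ref{prop:G_0_cubic_vertex}, \ref{prop:G_1_cubic_vertex}, \ref{prop:G_2_cubic_vertex}, \ref{prop:G_3_cubic_vertex_six} and~\ref{prop:G_3_cubic_vertex_hw} provide the needed bicycle unless the local structure is the tightly constrained one they isolate, and in those residual situations $G$ is seen to stay in the same family (or to migrate $\mathcal{G}_1 \to \mathcal{G}_2$). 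When $d_J(x) \geq 4$, planarity (Proposition~\ref{prop:planar-3-connected}) pins down the cyclic order of $\partial_{J'}(x)$ so that only finitely many bisplits are admissible, and the Osculating-Bicycle Propositions~\ref{prop:g_0_osculating_bicycle}, \ref{prop:g_2_osculating_bicycle_1}, \ref{prop:g_2_osculating_bicycle_2}, \ref{prop:g_3_osculating_bicycle} and~\ref{prop:G_1_non_cubic_vertex} furnish the contradicting bicycle in all but the boundary configurations that realise $G \in \mathcal{G}_3$ or $G = R_8^{-}$.

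The hard part, I expect, is the bookkeeping in this last step rather than any single clever device. One must track, for every family and every way of distributing $\partial_{J'}(x)$ between the two new vertices, whether the removable-doubleton edges $\alpha_0, \alpha_1$ become adjacent and whether each constituent \bw\ ``survives'' the split, and then recognise the surviving structure as a family member or else locate a forbidden osculating bicycle. The osculating-bicycle propositions are engineered to cover exactly the generic distributions, so the genuine difficulty is to confirm that the few exceptional distributions they miss are precisely the ones that construct a new family member --- in particular the transitions $\mathcal{G}_1 \to \mathcal{G}_2$ and $\mathcal{G}_2 \to \mathcal{G}_3$, together with the sporadic graph $R_8^{-} \in \mathcal{G}_3$.
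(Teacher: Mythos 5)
Your proposal is, in substance, a re-derivation of the paper's Main Theorem (Theorem~\ref{thm:characterize-nonbipartite-ce-graphs}) --- the characterization of planar \ce\ \emph{irreducible} graphs --- and as such it follows the paper's own inductive argument for that theorem essentially step for step (base case via Corollaries~\ref{DegreeThree-Graph} and~\ref{cor:Planar Cycle-Extendable Bricks}, bicontraction of a degree-two vertex, Corollary~\ref{cor:J is 2-stable and parallel edge}, and the osculating-bicycle case analysis). But the statement you were asked to prove is the closing corollary, whose graph $G$ is an \emph{arbitrary} planar nonbipartite \mcg\ and whose conclusion concerns the irreducible graph $H$ obtained from $G$ by repeated series and parallel reductions. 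Your forward direction begins ``let $G$ be a planar irreducible \ce\ \mcg'', which silently identifies $G$ with $H$; nowhere do you pass from $G$ to $H$.

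The two missing steps are exactly the content of the corollary once the Main Theorem is in hand, and the paper dispatches them in one line. First, Proposition~\ref{prop:irreducible-reduction} gives that $G$ is \ce\ if and only if $H$ is \ce, so cycle-extendability transfers in both directions across the reductions. Second, since a series reduction preserves the parity of every cycle and a parallel reduction deletes only one edge of a parallel pair, $H$ is nonbipartite whenever $G$ is; hence $H \neq K_2$, and the $K_2$ alternative in the Main Theorem is excluded, leaving $H \in \mathcal{G}_0 \cup \mathcal{G}_1 \cup \mathcal{G}_2 \cup \mathcal{G}_3$ precisely when $H$ is \ce. If you intend your write-up as a proof of the Main Theorem followed by this corollary, you must still add these two observations; as written, the proposal proves a different (stronger, already established) statement and leaves the stated equivalence between $G$ and its reduction $H$ unproved.
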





\bibliographystyle{abbrvnat}
\bibliography{clm}

\end{document}